\newtheorem{thm}{Theorem}[section]
\newtheorem{cor}[thm]{Corollary}
\newtheorem{lem}[thm]{Lemma}
\newtheorem{prop}[thm]{Proposition}
\newtheorem{rem}[thm]{Remark}
\newtheorem{Defi}{Definition}[section]
\newtheorem{Exam}[thm]{Example}
\newcommand{\disp}{\displaystyle}
\numberwithin{equation}{section}
\newcommand{\R}{\mathbb R}
\newcommand{\Z}{\mathbb Z}
\newcommand{\cic}{C^{\infty}_{\text{c}}}
\newcommand{\dr}{\partial}
\begin{document}

\parindent 0pc
\parskip 6pt
\overfullrule=0pt

\title{\quad Monotonicity for solutions to semilinear problems in epigraphs}

\author{Nicolas Beuvin$^{\dagger}$, Alberto Farina$^{\dagger}$,  Berardino Sciunzi$^{*}$}
\address{$^{\dagger}$  Universit\'e de Picardie Jules Verne, LAMFA, CNRS UMR 7352, 33, rue Saint-Leu 80039 Amiens, France}\email{nicolas.beuvin@u-picardie.fr, alberto.farina@u-picardie.fr}
\address{$^{*}$Dipartimento di Matematica e Informatica, UNICAL, Ponte Pietro  Bucci 31B, 87036 Arcavacata di Rende, Cosenza, Italy}
\email{sciunzi@mat.unical.it}




\maketitle


\begin{abstract}
We consider positive solutions, possibly unbounded, to the semilinear equation $-\Delta u=f(u)$ on continuous epigraphs bounded from below. Under the homogeneous Dirichlet boundary condition, we prove new monotonicity results for $u$, when $f$ is a (locally or globally) Lipschitz-continuous function satisfying $ f(0) \geq 0$. As an application of our new monotonicity theorems, we prove some classification and/or non-existence results. 
To prove our results, we first establish some new comparison principles for semilinear problems on general unbounded open sets of $\R^N$, and then we use them to start and to complete a modified version of the moving plane method adapted to the geometry of the epigraph $\Omega$. \\
As a by-product of our analysis, we also prove some new results of uniqueness and symmetry for solutions (possibly 
unbounded and sign-changing) to the homogeneous Dirichlet BVP for the semilinear Poisson equation in fairly general unbounded domains. 

\end{abstract}

\section{Introduction and main results}

We consider solutions, possibly unbounded, to the problem
\begin{equation}\label{NonLin-PoissonEq}
\begin{cases}
-\Delta u=f(u) & \text{ in } \Omega,\\
\quad u >0 & \text{ in } \Omega,\\
\quad u=0\,\, &\text{ on } \partial\Omega,
\end{cases}
\end{equation}
where $f$ is a (locally or globally) Lipschitz-continuous function satisfying $$f(0)\geq 0,$$ and $\Omega$ is an epigraph of 
$\mathbb{R}^N$, with $ N \geq 2$, i.e.  
\[
\Omega\,:=\,\{x=(x',x_N) \in\mathbb{R}^{N-1}\times\mathbb{R} \,:\, x_N> g(x')\}\,,
\]
where $g\,:\, \mathbb{R}^{N-1}\rightarrow \mathbb{R}$ is a continuous function bounded from below. 

\smallskip

The main results of the present paper prove that the solution $u$ is strictly increasing in the $x_N$-direction, i.e., $\frac{\partial u}{\partial x_N}>0$ in $\Omega$. Our monotonicity results are new. They cover the case of uniformly continuous epigraphs (not necessarily locally Lipschitz-continuous), coercive epigraphs, as well as that of a large family of merely continuous epigraphs (with possibly arbitrary growth at infinity and not necessarily coercive).  Furthermore, we do not assume that $u$ is bounded.
\\ 
Half-spaces and coercive epigraphs are a very special case of the geometries covered by our analysis.
Thus, our results recover, extend and complete existing ones, which mainly deal with the case of a half-space (\cite{BCNMagenes}-\cite{BCN3},\cite{DS3},\cite{Dancer},\cite{Far-Sandro},\cite{FaSciunzi},\cite{FaSciunzi2},\cite{fval},\cite{QS}) or a locally Lipschitz-continuous coercive epigraph (\cite{BCN3},\cite{Esteban-Lions},\cite{FaSciunzi}).

Our results apply to classical solutions of \eqref{NonLin-PoissonEq} as well as to those in the sense of distributions. 
To deal with the latter case and settle the general framework in which we work, we introduce the following functional space : 
\begin{equation}\label{Def-H1loc-Adherence-Omega}
H^1_{loc}(\overline{\Omega}) = \{ u : \Omega \mapsto \R,  \,\,  \text{$u$ Lebesgue-mesurable} \, : \, u \in H^1(\Omega \cap B(0,R)) \quad \forall \, R>0 \}.\footnote{ \, i.e., $u$ is Lebesgue-measurable on $\Omega$ and $u \in H^1(U)$ for any open bounded set $ U \subset \Omega$.}
\end{equation}

The choice of the distributional framework is quite natural and justified by the fact that, in many cases, the epigraphs we consider are merely continuous (i.e., the functions $g$ that define them are assumed to be continuous and nothing more). 

\smallskip

Let us first state the main results for uniformly continuous epigraphs. 

\begin{thm}\label{TH1}
Let $\Omega$ be a uniformly continuous epigraph bounded from below. \\ 
Assume $f \in {Lip}_{loc}([0,+\infty))$ with 
\begin{equation}\label{cond-in-zero}
          \liminf_{t \to 0^+} \frac{f(t)}{t} > 0
\end{equation}
and let $u \in C^0(\overline{\Omega}) \cap H^1_{loc}(\overline{\Omega})$ 
be a distributional solution to \eqref{NonLin-PoissonEq} which is uniformly continuous on finite strips.\footnote{\, i.e., for any $R>0$, $u$ is uniformly continuous on the strip $\Omega \cap \left \lbrace x_N <R \right\rbrace$. }\\
Then $u$ is strictly increasing in the $x_N$-direction, i.e., $\frac{\partial u}{\partial x_N}>0$ in $\Omega$.\footnote{ \, Continuous distributional solutions of $-\Delta u=f(u) $ in $\Omega$ belong to $C^2(\Omega),$ by standard elliptic theory.} 
\end{thm}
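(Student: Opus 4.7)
The plan is to run a moving plane argument in the $x_N$-direction. Set $\lambda_0 := \inf_{\mathbb{R}^{N-1}} g$ and, for each $\lambda > \lambda_0$, introduce $\Omega_\lambda := \Omega \cap \{x_N < \lambda\}$, the reflection $R_\lambda(x',x_N) := (x', 2\lambda - x_N)$ across the hyperplane $T_\lambda := \{x_N = \lambda\}$, and the reflected function $u_\lambda(x) := u(R_\lambda(x))$. Because $\Omega$ is an epigraph, $R_\lambda(\Omega_\lambda) \subset \Omega$, so $u_\lambda$ is well defined on $\Omega_\lambda$ and solves $-\Delta u_\lambda = f(u_\lambda)$ there. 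The aim is to prove $u \leq u_\lambda$ in $\Omega_\lambda$ for every $\lambda > \lambda_0$; then the monotonicity $\partial_{x_N} u \geq 0$ in $\Omega$ follows by letting $\lambda \to x_N^+$, and the strict inequality $\partial_{x_N} u > 0$ comes from the strong maximum principle applied to the non-negative solution $\partial_{x_N} u$ of the linearized equation.

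To start the procedure I would show that $u \leq u_\lambda$ on $\Omega_\lambda$ for every $\lambda$ sufficiently close to $\lambda_0$. For such $\lambda$, $\Omega_\lambda$ is a thin layer just above the bottom of the epigraph. The uniform continuity of $g$ controls its geometry at infinity, and the uniform continuity of $u$ on finite strips together with $u = 0$ on $\partial \Omega$ makes $\sup_{\Omega_\lambda} u$ as small as desired by choosing $\lambda - \lambda_0$ small. The assumption $\liminf_{t \to 0^+} f(t)/t > 0$ then places $w_\lambda := (u - u_\lambda)^+$ in the regime of the small-solution / weighted comparison principle for unbounded open sets established earlier in the paper, and that principle forces $w_\lambda \equiv 0$.

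Next, I would set $\lambda^\star := \sup\{\lambda > \lambda_0 : u \leq u_\mu \text{ in } \Omega_\mu \text{ for every } \mu \in (\lambda_0, \lambda]\}$ and argue by contradiction that $\lambda^\star = +\infty$. If $\lambda^\star < +\infty$, by continuity $u \leq u_{\lambda^\star}$ in $\Omega_{\lambda^\star}$; the strong maximum principle applied to $u_{\lambda^\star} - u$ on each connected component rules out equality (since $u$ vanishes on part of $\partial \Omega_{\lambda^\star}$ where $u_{\lambda^\star}$ is strictly positive), giving $u < u_{\lambda^\star}$. To push past $\lambda^\star$, I would decompose $\Omega_{\lambda^\star + \varepsilon}$ into a compact piece $K$, where $u_{\lambda^\star + \varepsilon} - u \geq \delta > 0$ by continuity, and a peripheral piece which is either close to $\partial \Omega$ or to the top hyperplane $T_{\lambda^\star}$; on that peripheral piece $u$ is small, so the comparison principle of the starting step applies there and yields $u \leq u_{\lambda^\star + \varepsilon}$, contradicting the maximality of $\lambda^\star$.

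The hard part will be the push step in this level of generality, because $\Omega$ is unbounded and $u$ is only assumed uniformly continuous on strips (in particular possibly unbounded), so one cannot rely on decay at infinity nor on boundedness of $u$. This is precisely where the uniform continuity of $g$ and of $u$ on strips are decisive: the first prevents the geometry of $\Omega$ from degenerating as $|x'| \to \infty$ and makes the peripheral region uniformly thin in a controlled way, the second forces $u$ to be uniformly small in a neighbourhood of $\partial \Omega$ at infinity. Together they channel the analysis into the new comparison principles for general unbounded open sets proved earlier in the paper, which can accommodate the possibly unbounded candidate $w_\lambda$. Those principles are the engine that lets the moving plane slide all the way to $+\infty$, closing the argument.
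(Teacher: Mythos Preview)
Your overall moving-plane scaffolding matches the paper's, but the push step has a genuine gap, and you have misplaced the role of the hypothesis $\liminf_{t\to 0^+} f(t)/t > 0$.

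In the starting step that hypothesis plays no role: the comparison principle on the thin set $\Omega_\lambda$ (the paper's Theorem~\ref{thComp2}) applies for any locally Lipschitz $f$, using only that $u$ has at most linear growth on finite strips (a consequence of uniform continuity). Your sentence that the condition on $f$ near zero ``places $w_\lambda$ in the regime of the small-solution comparison principle'' is not how the machinery is set up.

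The decisive use of $\liminf_{t\to 0^+} f(t)/t>0$ is in the push step, and there your decomposition into a \emph{compact} piece $K$ plus a ``peripheral'' piece close to $\partial\Omega$ or to $T_{\lambda^\star}$ does not cover $\Omega_{\lambda^\star+\varepsilon}$: since $\Omega$ is unbounded in $x'$, the interior slab $\{g(x')+\delta<x_N<\lambda^\star-\delta\}$ is itself unbounded, and on it $u_{\lambda^\star}-u$ need not be bounded away from zero. The paper handles this slab by a translation/compactness argument (Proposition~\ref{PropDS}): if $u\le u_{\lambda^\star+\varepsilon}$ failed there along $\varepsilon_k\to 0$ at points $x^k$, one translates by $(x^k)'$, uses the uniform continuity of $g$ and of $u$ on strips together with Ascoli--Arzel\`a to extract a limit epigraph $\Omega^\infty$ and a limit solution $u_\infty\ge 0$ with $u_\infty(0',x_N^\infty)=u_{\infty,\lambda^\star}(0',x_N^\infty)$ at an interior point. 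The strong maximum principle then forces $u_\infty\equiv u_{\infty,\lambda^\star}$ on a connected component of the cap, hence $u_\infty$ vanishes at some interior point of $\Omega^\infty$---a contradiction \emph{provided} $u_\infty>0$. This is exactly where $\liminf_{t\to 0^+} f(t)/t>0$ enters: via a first-eigenfunction barrier and sliding, it guarantees the limit profile is strictly positive. Without that hypothesis the limit could be identically zero and no contradiction arises (which is why Theorem~\ref{TH2bis} requires a completely different Harnack-based rescaling). Your final paragraph gestures at the right ingredients but does not identify this translation mechanism, which is the heart of the proof.
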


\smallskip

The epigraphs considered in the previous result could be very wild, as shown by the following two-dimensional examples constructed by using the Weierstrass-type functions $ g_{b,\alpha}(x) = \sum_{n=1}^{\infty} b^{-n \alpha} \cos(b^n \pi x)$, where $b>1$ is an integer and $ \alpha \in (0,1)$. The function $g_{b,\alpha}$ is uniformly continuous, bounded and nowhere differentiable (\cite{Hardy}). 

\smallskip

If we further assume that the epigraph satisfies a weak regularity assumption, we can prove the monotonicity result for any (locally or globally) Lipschitz-continuous function $f$ satisfying $ f(0) \geq 0$ (and this by also weakening the assumptions on $u$). 

\smallskip

\begin{thm}\label{TH2bis}
Let $\Omega$ be a uniformly continuous epigraph bounded from below and satisfying a uniform exterior cone condition.
\\ 
Assume $ f \in {Lip}_{loc}([0,+\infty))$ with $f(0)\geq0$ and let $u \in C^0(\overline{\Omega}) \cap H^1_{loc}(\overline{\Omega})$ be a distributional solution to \eqref{NonLin-PoissonEq} which is bounded on finite strips.\footnote{\, i.e., for any $R>0$, 
\begin{equation}\label{Hyp-TH2bis}
\sup_{\Omega \cap \left \lbrace x_N <R \right\rbrace } u \,\, < + \infty.
\end{equation}
}\\
Then $u$ is strictly increasing in the $x_N$-direction, i.e., $\frac{\partial u}{\partial x_N}>0$ in $\Omega$.
\end{thm}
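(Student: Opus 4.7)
The plan is to run a moving planes argument in the $x_N$-direction, as alluded to in the introduction, combining it with the new comparison principles developed earlier in the paper. Set $\lambda_0 := \inf_{\R^{N-1}} g \in \R$ and, for every $\lambda > \lambda_0$, introduce the cap $\Sigma_\lambda := \Omega \cap \{x_N < \lambda\}$, the hyperplane $T_\lambda := \{x_N = \lambda\}$, the reflection $x^\lambda := (x',2\lambda-x_N)$, and the reflected function $u_\lambda(x) := u(x^\lambda)$. Since $\Omega$ is an epigraph, any $x \in \Sigma_\lambda$ satisfies $x^\lambda_N = 2\lambda - x_N > \lambda > g(x')$, so $x^\lambda \in \Omega$ and $u_\lambda$ is well-defined on $\Sigma_\lambda$. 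The reflected function solves $-\Delta u_\lambda = f(u_\lambda)$ in $\Sigma_\lambda$, and the difference $w_\lambda := u - u_\lambda$ is non-positive on $\partial \Sigma_\lambda$: indeed $w_\lambda = -u_\lambda < 0$ on $\partial \Omega \cap \{x_N<\lambda\}$ and $w_\lambda = 0$ on $T_\lambda \cap \Omega$. The whole task reduces to proving $w_\lambda \leq 0$ in $\Sigma_\lambda$ for every $\lambda > \lambda_0$; from this, the strict monotonicity $\partial_{x_N} u > 0$ follows by applying the strong maximum principle and the Hopf lemma to the linearised equation satisfied by $\partial_{x_N} u \geq 0$.

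Thanks to \eqref{Hyp-TH2bis}, for each fixed $\lambda$ the quantity $M_\lambda := \sup_{\Sigma_\lambda} u < +\infty$, so $u$ and $u_\lambda$ take values in $[0,M_\lambda]$ on $\Sigma_\lambda$. With $L_\lambda$ the Lipschitz constant of $f$ on $[0,M_\lambda]$, one has the linear differential inequality $-\Delta w_\lambda \leq L_\lambda \, w_\lambda^+$ in $\Sigma_\lambda$ in the distributional sense. I would first carry out the \emph{starting step}: for $\lambda - \lambda_0$ small, $\Sigma_\lambda$ is contained in the thin horizontal slab $\{\lambda_0 < x_N < \lambda\}$. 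Although $\Sigma_\lambda$ may well be unbounded in the $x'$-directions, a Poincar\'e-type argument on a slab, coupled with the new comparison principles for general unbounded sets established earlier in the paper, yields $w_\lambda^+ \equiv 0$, i.e., $w_\lambda \leq 0$ in $\Sigma_\lambda$. The uniform exterior cone condition enters here to guarantee the boundary regularity needed to use $w_\lambda^+$ as an admissible test function.

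Next comes the \emph{continuation step}: set $\Lambda := \sup\{\lambda > \lambda_0 : w_\mu \leq 0 \text{ in } \Sigma_\mu \text{ for all } \mu \in (\lambda_0,\lambda]\}$ and assume, for contradiction, that $\Lambda < +\infty$. By continuity $w_\Lambda \leq 0$ in $\Sigma_\Lambda$, and the strong maximum principle on each connected component upgrades this to the strict inequality $w_\Lambda < 0$ in $\Sigma_\Lambda$ (the alternative $w_\Lambda \equiv 0$ on some component being ruled out by $u > 0$ in $\Omega$ together with $u = 0$ on $\partial \Omega$). To push $\Lambda$ slightly further, I would decompose, for small $\varepsilon > 0$, the enlarged cap $\Sigma_{\Lambda+\varepsilon}$ into a ``core'' on which $w_{\Lambda + \varepsilon} < 0$ still holds by continuous dependence from $w_\Lambda < 0$, and a ``remainder'' consisting of a narrow horizontal slab just below $T_{\Lambda+\varepsilon}$ together with a neighbourhood of $\partial \Omega$. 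On this remainder the comparison principle of the starting step re-applies, yielding $w_{\Lambda + \varepsilon} \leq 0$ in $\Sigma_{\Lambda+\varepsilon}$ and contradicting the definition of $\Lambda$.

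The main obstacle is the continuation step, for two closely related reasons. First, $\Sigma_\Lambda$ is unbounded in the $x'$-directions, so naive compactness arguments fail and one really needs the quantitative comparison principles on unbounded sets recalled above. Second, since here one only assumes $f(0) \geq 0$ rather than the stronger \eqref{cond-in-zero}, a positive lower barrier for $u$ near $\partial \Omega$ — the standard device available in the setting of Theorem \ref{TH1} — is no longer at one's disposal; the uniform exterior cone condition must replace it by providing enough boundary regularity to transfer the strict negativity $w_\Lambda < 0$ up to a neighbourhood of $\partial \Omega$ in a robust, non-compact way. It is exactly at this junction that the new comparison principles for semilinear problems on general unbounded open sets, announced in the abstract, do the decisive work.
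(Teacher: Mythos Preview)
Your overall moving-planes architecture matches the paper's, and the starting step (Step~1) is essentially correct: the comparison principle of Theorem~\ref{thComp1} applies on the thin cap because $u$ is bounded there. (One minor misattribution: the cone condition is \emph{not} needed to justify $w_\lambda^+$ as a test function in Step~1; the assumption $u\in H^1_{loc}(\overline{\Omega})\cap C^0(\overline{\Omega})$ already suffices for Theorem~\ref{thComp1}. The cone condition is used only in Step~2.)

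The genuine gap is in the continuation step. Your phrase ``$w_{\Lambda+\varepsilon}<0$ still holds on the core by continuous dependence from $w_\Lambda<0$'' is precisely where the argument breaks on an unbounded cap, and you have not indicated a mechanism that would make it work. The paper argues by contradiction: take $\varepsilon_k\to 0$ and points $x^k\in\overline{\Sigma^g_{\delta,\tilde t-\delta}}$ with $u(x^k)>u_{\tilde t+\varepsilon_k}(x^k)$, and translate by $(x^k)'$. In Theorem~\ref{TH1} the limit profile $u_\infty$ is shown to be strictly positive via the condition $\liminf_{t\to 0^+}f(t)/t>0$ and a sliding argument with an eigenfunction. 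Here that route is closed, and your proposal does not supply a substitute. The paper's substitute is a \emph{scaling} argument: define $v_k(x)=u((x^k)'+x',x_N)/u(x^k)$, so that $v_k(0',x^k_N)=1$; add the corrector $\frac{f(0)}{2u(x^k)}(x_N-x^k_N)^2$ to obtain a nonnegative supersolution and apply the Harnack inequality to bound $v_k$ uniformly on an interior compact set; then invoke the uniform H\"older estimate of Proposition~\ref{prop2.2} (this is exactly where the uniform exterior cone condition enters) to get $C^{0,\alpha}$ compactness \emph{up to the boundary} for the sequence $(v_k)$. The limit $v_\infty$ is then automatically positive at the limit point, and one derives the same contradiction via the strong maximum principle as in Theorem~\ref{TH1}. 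Your description ``the cone condition provides enough boundary regularity to transfer the strict negativity $w_\Lambda<0$'' is heading in the right direction but misses both the normalization $v_k=u/u(x^k)$ and the Harnack step, without which the compactness argument has no content.
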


\begin{thm}\label{TH2}
Let $\Omega$ be a uniformly continuous epigraph bounded from below and satisfying a uniform exterior cone condition.
\\ 
Assume $ f \in {Lip}([0,+\infty))$ with $f(0) \geq 0$ and let $u \in C^0(\overline{\Omega}) \cap H^1_{loc}(\overline{\Omega})$ be a distributional solution to \eqref{NonLin-PoissonEq} with at most exponential growth on finite strips.\footnote{ \, i.e., for any $R>0$, there are positive numbers $A=A(R),B=B(R)$ such that 
$$ u(x) \leq Ae^{B\mid x \mid} \qquad  \forall \, x \in \Omega \cap \left \lbrace x_N <R \right\rbrace .$$} \\ 
Then $u$ is strictly increasing in the $x_N$-direction, i.e., $\frac{\partial u}{\partial x_N}>0$ in $\Omega$.
\end{thm}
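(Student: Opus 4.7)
The plan is to follow the same moving plane procedure in the $x_N$-direction used in the proof of Theorem \ref{TH2bis}, replacing each comparison-principle step by its counterpart adapted to solutions with at most exponential growth on finite strips. The stronger hypothesis $f \in Lip([0,+\infty))$ enters decisively: its global Lipschitz constant $L$ provides a uniform $L^{\infty}$ bound on the potential in the linearized equation satisfied by the difference of $u$ and its reflection, and this uniform bound is exactly what allows the Phragm\'en--Lindel\"of type comparison principles established earlier in the paper on general unbounded open sets of $\R^N$ to be applied.

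First, I would introduce the usual moving plane notation. For $\lambda \in \R$, set $\Sigma_\lambda := \Om \cap \{x_N < \lambda\}$ and $u_\lambda(x',x_N) := u(x',\, 2\lambda - x_N)$, and work on the relatively open subset $\widetilde\Sigma_\lambda \subset \Sigma_\lambda$ where the reflection $(x',2\lambda-x_N)$ falls inside $\Om$. The difference $w_\lambda := u_\lambda - u$ satisfies
\[
-\Delta w_\lambda \;=\; c_\lambda(x)\, w_\lambda \quad \text{in } \widetilde\Sigma_\lambda, \qquad \|c_\lambda\|_{L^\infty} \le L,
\]
with $c_\lambda := [f(u_\lambda)-f(u)]/[u_\lambda - u]$ wherever $w_\lambda \neq 0$. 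On $\partial\widetilde\Sigma_\lambda$ the function $w_\lambda$ is nonnegative: it vanishes on the symmetry plane $\{x_N = \lambda\}$, and it is nonnegative on the $\partial\Om$-pieces (where $u = 0$ while $u_\lambda \geq 0$). The exponential-growth hypothesis on $u$ on finite strips carries over to $w_\lambda$ on each such strip.

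The moving plane then starts and continues in the usual way: $\widetilde\Sigma_\lambda$ is empty for $\lambda \leq \inf g$, a thin-strip comparison argument (using the uniform exterior cone condition together with $f(0)\geq 0$) gives $w_\lambda \geq 0$ in $\widetilde\Sigma_\lambda$ for $\lambda$ slightly above $\inf g$, and one sets
\[
\Lambda := \sup \bigl\{ \lambda \in \R : w_\mu \geq 0 \text{ in } \widetilde\Sigma_\mu \,\,\forall \mu \leq \lambda \bigr\}.
\]
To conclude $\Lambda = +\infty$, I argue by contradiction: if $\Lambda < +\infty$, continuity yields $w_\Lambda \geq 0$ in $\widetilde\Sigma_\Lambda$, and one must rule out a sequence $\lambda_n \downarrow \Lambda$ along which $w_{\lambda_n}$ becomes somewhere negative. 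The exponential-growth comparison principle applied to $w_{\lambda_n}$ on $\widetilde\Sigma_{\lambda_n}$ forces $w_{\lambda_n} \geq 0$ for $n$ large, contradicting the choice of $\lambda_n$. Once $\Lambda = +\infty$, letting $\lambda \to +\infty$ gives $\partial u/\partial x_N \geq 0$ in $\Om$, and strict positivity follows from the strong maximum principle applied to $v := \partial u/\partial x_N$, which solves $-\Delta v = f'(u)\,v$ in the weak sense.

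The main obstacle is precisely the comparison-principle step just described. Since $u$ is only controlled by $A\,e^{B|x|}$ on strips, standard elementary maximum principles do not apply to $w_\lambda$; a Phragm\'en--Lindel\"of type bound is needed, and such a bound requires the potential $c_\lambda$ to lie globally in $L^\infty$. Without global Lipschitz continuity of $f$, $c_\lambda$ could inherit unbounded behaviour from $u$ and the whole scheme would collapse. Once $\|c_\lambda\|_{L^\infty} \leq L$ is secured, the exponential-barrier comparison lemma from the general-unbounded-domain section of the paper closes the argument and delivers the strict monotonicity.
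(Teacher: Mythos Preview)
The continuation step (proving $\Lambda = +\infty$) contains a genuine gap. You assert that ``the exponential-growth comparison principle applied to $w_{\lambda_n}$ on $\widetilde\Sigma_{\lambda_n}$ forces $w_{\lambda_n} \geq 0$ for $n$ large''. But Theorem~\ref{thComp2} yields a comparison only when the section $\mathtt{S}_{e_N}$ of the domain is below a fixed threshold $\varepsilon(L_f,\gamma)$; the full cap $\widetilde\Sigma_{\lambda_n}$ has section close to $\Lambda$, which need not be small. This is exactly why starting the moving plane is easy but continuing it is not, and your sketch offers no mechanism to bridge this.

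The paper handles the continuation by splitting $\Sigma^g_{\tilde t + \varepsilon}$ into an interior part $\overline{\Sigma^g_{\delta,\tilde t - \delta}}$ and a thin remainder $\Sigma^g_{\tilde t + \varepsilon} \setminus \overline{\Sigma^g_{\delta,\tilde t - \delta}}$ with section at most $3\delta$. On the thin remainder Theorem~\ref{thComp2} applies. On the interior part one must prove $u \leq u_{\tilde t + \varepsilon}$ (Proposition~\ref{PropDS}), and under the present hypotheses (no condition~\eqref{cond-in-zero}, $u$ not uniformly continuous on strips) this requires a substantially different argument from the one for Theorem~\ref{TH1}: one translates along a hypothetical sequence $(x^k)$ where the inequality fails, \emph{normalizes} $v_k(\cdot) := u(\cdot + ((x^k)',0))/u(x^k)$, applies the Harnack inequality to the auxiliary function $w_k := v_k + \tfrac{f(0)}{2u(x^k)}(x_N - x_N^k)^2$ (which absorbs the inhomogeneity coming from $f(0)\ge 0$ and has a uniformly bounded potential thanks to the global Lipschitz constant of $f$) to bound $v_k$ locally, and then invokes the uniform $C^{0,\alpha}$ estimate of Proposition~\ref{prop2.2} to extract a limit $v_\infty$ with $v_\infty(0',x^\infty_N)=1$. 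The strong maximum principle on the limit problem then produces the contradiction. None of this machinery appears in your proposal, and you place the role of the uniform exterior cone condition in the wrong step: it is used not to start the method but to furnish the boundary H\"older estimates (Proposition~\ref{prop2.2}) that make the normalized sequence $(v_k)$ precompact up to the boundary.

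A smaller point: your final strict-positivity argument via $-\Delta v = f'(u)\,v$ presumes $f\in C^1$, which is not assumed. The paper instead applies the Hopf lemma to $u_t - u$ on half-balls touching $\{x_N = t\}$.
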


\smallskip

Note that Theorem \ref{TH2bis} and Theorem \ref{TH2} cover the case of uniformly continuous epigraphs bounded from below, which are not necessarily locally Lipschitz-continuous. See Example \ref{Ex1} in Section \ref{SectEXAMPLES}.

\smallskip

Although the following results are special cases of the preceding theorems, even in this weaker form, they are new.

\begin{cor}\label{Cor1}
Let $\Omega$ be a uniformly continuous epigraph bounded from below. 
Assume $ f \in {Lip}_{loc}([0,+\infty))$ with 
\begin{equation}\label{cond-in-zero}
          \liminf_{t \to 0^+} \frac{f(t)}{t} > 0
\end{equation}
 and let $u \in C^2(\Omega) \cap C^0(\overline{\Omega})$ be a classical solution of \eqref{NonLin-PoissonEq} such 
 that $\nabla u \in L^{\infty}(\Omega)$. 
Then $u$ is strictly increasing in the $x_N$-direction, i.e., $\frac{\partial u}{\partial x_N}>0$ in $\Omega$.
\end{cor}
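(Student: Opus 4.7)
The plan is to derive Corollary \ref{Cor1} from Theorem \ref{TH1} by verifying, for the given classical solution $u$, the hypotheses required there. The assumptions on $\Omega$ and on $f$ (including \eqref{cond-in-zero}) are identical in the two statements, and any classical solution of \eqref{NonLin-PoissonEq} is automatically a distributional solution. So what really has to be checked is that (a) $u \in H^1_{loc}(\overline{\Omega})$ and (b) $u$ is uniformly continuous on every finite strip $\Omega \cap \{x_N < R\}$.

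Step (a) is essentially immediate: $\overline{\Omega \cap B(0,R)}$ is a compact subset of $\overline{\Omega}$, so $u \in L^\infty(\Omega \cap B(0,R))$ by continuity on $\overline{\Omega}$, and $\nabla u \in L^\infty(\Omega)$ then gives $u \in H^1(\Omega \cap B(0,R))$ for every $R>0$.

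For step (b) I would exploit the bound $L := \|\nabla u\|_{L^\infty(\Omega)}$ via the elementary estimate $|u(p)-u(q)| \leq L\cdot \mathrm{length}(\gamma)$, valid for any piecewise-$C^1$ curve $\gamma \subset \Omega$ joining $p$ and $q$ (by the fundamental theorem of calculus, since $u \in C^2(\Omega)$). Let $\omega_g$ denote the modulus of continuity of the uniformly continuous function $g$ defining $\Omega$. For any $p,q \in \Omega$ I propose joining them by the $Z$-shaped broken line $p \to (p', p_N + h) \to (q', q_N + h) \to q$, with $h := \omega_g(|p-q|) + 2|p-q|$. The two vertical legs stay in $\Omega$ because raising $x_N$ above a point of $\Omega$ cannot leave the epigraph. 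The horizontal leg stays in $\Omega$ as well: along it $g$ differs from $g(p')$ by at most $\omega_g(|p-q|)$, while its height is at least $p_N + h - |p-q|$, and the choice of $h$ is designed exactly so that $p_N + h - |p-q| > g(p') + \omega_g(|p-q|)$. The total length of this broken line is at most $2h + |p-q| = 2\omega_g(|p-q|) + 5|p-q|$, which tends to $0$ as $|p-q|\to 0$ independently of $p$ and $q$, yielding the required uniform continuity of $u$ on $\Omega$ (and a fortiori on any strip).

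Once (a) and (b) are established, Theorem \ref{TH1} applies and delivers $\partial u/\partial x_N > 0$ in $\Omega$. The only delicate ingredient is the path construction in (b): it hinges on the \emph{uniform} continuity of $g$ (through the existence of the modulus $\omega_g$), which is precisely the regularity encoded in the hypothesis that $\Omega$ is a uniformly continuous epigraph, so the reduction is clean.
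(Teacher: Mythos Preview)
Your reduction to Theorem \ref{TH1} is correct, and so is the $Z$-shaped path argument for step (b). The paper, however, takes a shorter route to the same conclusion: since $u=0$ on $\partial\Omega$, one can simply use the \emph{straight} segment from $x$ to $y$. If this segment leaves $\Omega$, let $p$ and $q$ be its first and last intersections with $\partial\Omega$; then $u(p)=u(q)=0$, while the sub-segments $[x,p)$ and $(q,y]$ lie in $\Omega$, so $|u(x)-u(y)|\le |u(x)| + |u(y)| \le L(|x-p|+|q-y|)\le L|x-y|$ with $L=\|\nabla u\|_{L^\infty(\Omega)}$. This gives global Lipschitz continuity of $u$ with the sharp constant, and uses neither the epigraph structure nor the uniform continuity of $g$. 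Your argument, by contrast, only yields the modulus $|u(x)-u(y)|\le L\bigl(5|x-y|+2\omega_g(|x-y|)\bigr)$, but it has the virtue of not relying on the Dirichlet condition $u=0$: it would apply equally well to solutions with, say, bounded continuous boundary data on a uniformly continuous epigraph.
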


\begin{cor}\label{Cor2}
Let $\Omega$ be a globally Lipschitz-continuous epigraph bounded from below. 
Assume $ f \in {Lip}_{loc}([0,+\infty))$ with $f(0)\geq 0$ and let $u \in C^2(\Omega) \cap C^0(\overline{\Omega})$ be a classical solution of \eqref{NonLin-PoissonEq} such that $\nabla u \in L^{\infty}(\Omega)$. 
Then $u$ is strictly increasing in the $x_N$-direction, i.e., $\frac{\partial u}{\partial x_N}>0$ in $\Omega$.
\end{cor}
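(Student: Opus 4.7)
The plan is to deduce Corollary \ref{Cor2} directly from Theorem \ref{TH2bis} by checking that the stronger assumptions of the corollary imply all the weaker hypotheses of the theorem. Three items need verification: the geometric regularity of $\Omega$, the distributional framework for $u$, and the strip-boundedness of $u$.

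First, I would verify the geometric hypotheses. Since the defining function $g$ is globally Lipschitz, it is in particular uniformly continuous, so $\Omega$ is automatically a uniformly continuous epigraph bounded from below. For the uniform exterior cone condition, let $L$ denote the Lipschitz constant of $g$. At any boundary point $P_0=(x_0',g(x_0'))$, the truncated cone
\begin{equation*}
\mathcal{C}_{P_0} := \{(x',x_N) \in \R^N : L|x'-x_0'| < g(x_0') - x_N < 1 \}
\end{equation*}
lies in the complement of $\overline{\Omega}$: indeed, for $(x',x_N) \in \mathcal{C}_{P_0}$ the Lipschitz bound gives $g(x') \geq g(x_0') - L|x'-x_0'| > x_N$. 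Since $\mathcal{C}_{P_0}$ is a translate of a fixed cone depending only on $L$, the exterior cone condition is uniform in $P_0$.

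Next, I would check that $u$ fits the functional framework of Theorem \ref{TH2bis}. Being classical, $u$ is a distributional solution of $-\Delta u = f(u)$ in $\Omega$. Combined with the boundedness of $u$ on bounded subsets (justified in the last step), the assumption $\nabla u \in L^{\infty}(\Omega)$ yields $u \in H^1(\Omega \cap B(0,R))$ for every $R>0$, so that $u \in C^0(\overline{\Omega}) \cap H^1_{loc}(\overline{\Omega})$ in the sense defined in \eqref{Def-H1loc-Adherence-Omega}.

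Finally, I would establish the strip-boundedness. Fix $R>0$ and $x=(x',x_N) \in \Omega \cap \{x_N<R\}$. Since $u$ vanishes continuously on $\partial\Omega$, integrating $\partial_{x_N}u$ along the vertical segment from $(x',g(x'))$ to $x$ (which lies in $\overline{\Omega}$) yields
\begin{equation*}
|u(x)| \,\leq\, \int_{g(x')}^{x_N} |\partial_{x_N}u(x',t)|\,\di t \,\leq\, \|\nabla u\|_{L^{\infty}(\Omega)} \bigl(R - \inf_{\R^{N-1}} g\bigr),
\end{equation*}
which is finite since $g$ is bounded from below. All hypotheses of Theorem \ref{TH2bis} are thus satisfied, and the desired monotonicity follows. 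No substantive obstacle arises in this reduction; the only delicate point is the construction of the uniform exterior cone, which is however immediate from the global Lipschitz character of $\partial\Omega$.
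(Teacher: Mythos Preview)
Your proposal is correct and follows essentially the same route as the paper: reduce to Theorem \ref{TH2bis} by checking that a globally Lipschitz epigraph is uniformly continuous and satisfies a uniform exterior cone condition, that $u\in H^1_{loc}(\overline{\Omega})$, and that $u$ is bounded on finite strips via the mean value theorem (you phrase this last step as an integration along the vertical segment, which is equivalent). You also spell out the explicit construction of the exterior cone, which the paper simply quotes as a known fact at the beginning of Section~\ref{SectB}.
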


\begin{cor}\label{Cor3}
Assume $ \alpha \in (0,1)$ and let $\Omega$ be a globally Lipschitz-continuous epigraph bounded from below with 
$g \in C^{1, \alpha}_{loc}(\R^{N-1}).$ 
Assume $ f \in {Lip}_{loc}([0,+\infty))$ with $f(0)\geq0$ and let $u \in C^2(\Omega) \cap C^0(\overline{\Omega})$ be a classical solution of \eqref{NonLin-PoissonEq} which is bounded on finite strips.
Then $u$ is strictly increasing in the $x_N$-direction, i.e., $\frac{\partial u}{\partial x_N}>0$ in $\Omega$.
\end{cor}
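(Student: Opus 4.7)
The plan is to deduce Corollary \ref{Cor3} from Theorem \ref{TH2bis} by checking that all of its hypotheses are satisfied in the present setting. The only genuinely new thing to verify is the distributional/Sobolev regularity of the solution, since the geometric assumptions transfer almost for free and the assumptions on $f$ are literally those of Theorem \ref{TH2bis}.

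First, I would address the geometry of $\Omega$. Since $\Omega$ is a globally Lipschitz-continuous epigraph bounded from below, the defining function $g$ is in particular uniformly continuous on $\R^{N-1}$, so $\Omega$ is a uniformly continuous epigraph bounded from below. Moreover, it is a standard fact that a globally Lipschitz epigraph satisfies a uniform exterior cone condition: at every boundary point $(x'_0, g(x'_0))$ one can place a downward-pointing circular cone of fixed opening (depending only on the Lipschitz constant of $g$) and fixed height contained in $\R^N \setminus \Omega$. I would state this and give at most a one-line justification.

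Next, the main point, which I expect to be the only non-trivial step, is to show that $u \in H^1_{loc}(\overline{\Omega})$ and that $u$ is a distributional solution of $-\Delta u = f(u)$ in $\Omega$. Since $u$ is by hypothesis bounded on finite strips and $f \in \textit{Lip}_{loc}([0,+\infty))$, the function $f(u)$ is bounded on each set $\Omega \cap \{x_N < R\}$. Because $g \in C^{1,\alpha}_{loc}(\R^{N-1})$, the boundary $\partial\Omega$ is locally $C^{1,\alpha}$, so standard elliptic boundary regularity applied to the Dirichlet problem $-\Delta u = f(u)$ with $u = 0$ on $\partial\Omega$ (Agmon--Douglis--Nirenberg / Gilbarg--Trudinger Thm.~8.34, after localizing via a covering of $\overline\Omega \cap \overline{B(0,R)}$ and flattening the boundary) yields $u \in C^{1,\alpha}(\overline{\Omega} \cap \overline{B(0,R)})$ for every $R>0$. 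In particular $u \in H^1(\Omega \cap B(0,R))$ for all $R>0$, i.e., $u \in H^1_{loc}(\overline{\Omega})$; and since $u$ is $C^2$ inside $\Omega$, it is obviously a distributional solution there.

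With these verifications in place, all the hypotheses of Theorem \ref{TH2bis} hold: $\Omega$ is a uniformly continuous epigraph bounded from below satisfying a uniform exterior cone condition, $f \in \textit{Lip}_{loc}([0,+\infty))$ with $f(0)\geq 0$, and $u \in C^0(\overline\Omega) \cap H^1_{loc}(\overline\Omega)$ is a distributional solution to \eqref{NonLin-PoissonEq} bounded on finite strips. An application of Theorem \ref{TH2bis} then yields $\partial u/\partial x_N > 0$ in $\Omega$, concluding the proof. The main (and essentially only) obstacle is the boundary regularity step that produces $H^1_{loc}(\overline\Omega)$; everything else is a matter of bookkeeping.
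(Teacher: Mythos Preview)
Your proposal is correct and follows essentially the same approach as the paper: use the local $C^{1,\alpha}$ regularity of $\partial\Omega$ together with standard elliptic boundary regularity to obtain $u \in C^{1,\alpha}_{loc}(\overline{\Omega}) \subset H^1_{loc}(\overline{\Omega})$, and then invoke Theorem~\ref{TH2bis}. Your write-up supplies more detail on the geometric verifications (uniform continuity, uniform exterior cone condition) than the paper's terse proof, but the core argument is identical.
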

 
When the epigraph is coercive, the monotonicity result holds under the sole assumption of continuity of $g$, i.e., we do not need to require either uniform continuity or the uniform exterior cone condition for the epigraph. Moreover, this result holds regardless of the value of $f(0)$. More precisely, we have
 
\begin{thm}\label{TH2coerc}
Let $\Omega$ be a coercive continuous epigraph. Assume $ f \in {Lip}_{loc}([0,+\infty))$ and let $u \in C^0(\overline{\Omega}) \cap H^1_{loc}(\overline{\Omega})$ be a distributional solution to \eqref{NonLin-PoissonEq}. 
Then $u$ is strictly increasing in the $x_N$-direction, i.e., $\frac{\partial u}{\partial x_N}>0$ in $\Omega$.
\end{thm}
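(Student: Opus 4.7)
The plan is to adapt the moving plane method to the $x_N$-direction, crucially exploiting coercivity to reduce everything to comparison principles on bounded (and eventually narrow or small-measure) domains. Let me denote $\Sigma_\lambda := \Omega \cap \{x_N < \lambda\}$, $T_\lambda := \{x_N = \lambda\}$ and the reflected solution $u_\lambda(x', x_N) := u(x', 2\lambda - x_N)$. Coercivity of $g$ implies that each sublevel set $\{g<\lambda\}$ is bounded, hence so is $\Sigma_\lambda$; moreover, for every $x \in \Sigma_\lambda$ one has $2\lambda - x_N > \lambda > g(x')$, so the reflected point lies in $\Omega$ and $u_\lambda$ is a well-defined distributional solution of $-\Delta u_\lambda = f(u_\lambda)$ on $\Sigma_\lambda$, continuous on $\overline{\Sigma_\lambda}$. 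The boundary comparison $u \leq u_\lambda$ on $\partial \Sigma_\lambda$ is immediate: equality on $T_\lambda \cap \overline\Omega$ and $u=0 \leq u_\lambda$ on $\partial \Omega \cap \overline{\Sigma_\lambda}$.

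For the initial step, let $a := \min_{\R^{N-1}} g$ (attained by coercivity and continuity). For $\lambda > a$ close to $a$, the $x_N$-width of $\Sigma_\lambda$ is at most $\lambda - a$, hence arbitrarily small. Boundedness of $\Sigma_\lambda$ and continuity of $u$ give $\|u\|_{L^\infty(\Sigma_\lambda)}<\infty$, so $w_\lambda := u - u_\lambda$ satisfies $-\Delta w_\lambda = c_\lambda(x) w_\lambda$ in the distributional sense with $c_\lambda \in L^\infty(\Sigma_\lambda)$ (using the local Lipschitz character of $f$), and $w_\lambda^+ \in H^1_0(\Sigma_\lambda)$. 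I would invoke the weak maximum principle for narrow bounded domains, an instance of the comparison principles developed earlier in the paper, to conclude $u \leq u_\lambda$ in $\Sigma_\lambda$.

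I then set $\lambda^* := \sup\{\Lambda > a : u \leq u_\mu \text{ in } \Sigma_\mu \text{ for every } \mu \in (a,\Lambda]\}$ and suppose for contradiction $\lambda^* < \infty$. Continuity yields $u \leq u_{\lambda^*}$ in $\Sigma_{\lambda^*}$. On each connected component $C$ of $\Sigma_{\lambda^*}$ the strong maximum principle applied to $v := u_{\lambda^*} - u \geq 0$ (which solves a linear equation with bounded coefficients) leaves two alternatives: $v \equiv 0$ on $C$ or $v > 0$ on $C$. The equality case is ruled out by a purely interior argument: any component $C$ reaches $\partial\Omega$ at some bottom point $(x_0', g(x_0'))$ with $g(x_0') < \lambda^*$ (starting from any $x \in C$, the vertical segment down to $(x', g(x'))$ stays in $\Sigma_{\lambda^*}$ and hence in $\overline C$), and continuity would force $u(x_0', 2\lambda^* - g(x_0')) = 0$; but $2\lambda^* - g(x_0') > \lambda^* > g(x_0')$ places this point inside $\Omega$, contradicting $u > 0$. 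Hence $u < u_{\lambda^*}$ strictly on $\Sigma_{\lambda^*}$.

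To push past $\lambda^*$ and reach a contradiction, I would exhaust $\Sigma_{\lambda^*}$ by a compact $K$ with $|\Sigma_{\lambda^*+\e} \setminus K|$ arbitrarily small for $\e$ small: on $K$ the strict inequality combined with continuity gives $u_{\lambda^*+\e} - u > 0$ for $\e$ small enough, while on the small-measure set $\Sigma_{\lambda^*+\e}\setminus K$ the weak maximum principle for domains of small measure forces $w_{\lambda^*+\e} \leq 0$ there too. This contradicts the definition of $\lambda^*$, so $\lambda^* = +\infty$, which means $u(x',y_1) \leq u(x',y_2)$ for all $g(x') < y_1 < y_2$, with the inequality being strict thanks to the strict comparison at each finite $\lambda$. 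Strict monotonicity $\partial_{x_N} u > 0$ follows (alternatively, by applying the strong maximum principle to $\partial_{x_N}u \geq 0$, which solves $-\Delta v = f'(u) v$ in $\Omega$). The main obstacle—handling a merely continuous coercive epigraph without any boundary regularity, any condition on $f(0)$, or any control on $f$ near zero—is overcome precisely because coercivity makes the strips $\Sigma_\lambda$ bounded, so the required comparison tools are only those on bounded narrow or small-measure domains, and because the equality case in the moving plane argument is excluded by an interior reflection argument rather than by Hopf-type lemmas at $\partial \Omega$.
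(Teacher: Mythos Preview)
Your argument is correct and, for the coercive case, somewhat more direct than the paper's. The paper keeps the proof parallel to Theorems \ref{TH1}--\ref{TH2}: it proves the analogue of Proposition \ref{PropDS} by contradiction (extracting a sequence $x^k$ with $u(x^k)>u_{\tilde t+\e_k}(x^k)$, using coercivity to make $(x^k)$ bounded so that the translated data converge to mere translates of $g$ and $u$, and then deriving the same interior vanishing contradiction you obtain), and afterwards applies its small-\emph{section} comparison principle on the thin set $\Sigma^{g}_{\tilde t+\e}\setminus\overline{\Sigma^{g}_{\delta,\tilde t-\delta}}$. You bypass the sequence argument entirely---legitimate here because $\Sigma_{\lambda^*}$ is bounded, so the strict inequality $u_{\lambda^*}-u>0$ has a positive lower bound on any compact $K$ and persists for $u_{\lambda^*+\e}-u$---and you close with a small-\emph{measure} maximum principle on $\Sigma_{\lambda^*+\e}\setminus K$ rather than a small-section one. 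Your route is the classical Berestycki--Nirenberg scheme for bounded domains; the paper's is designed to dovetail with the non-coercive theorems, where the caps are unbounded and one genuinely needs the translation/compactness machinery and the new good-section comparison principles.

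One small correction at the end: strict monotonicity of $u$ does not by itself give $\partial_{x_N}u>0$ pointwise, and the linearized equation $-\Delta v=f'(u)v$ is not literally available since $f$ is only in $Lip_{loc}$. The clean way (the paper's Step~3) is Hopf's lemma applied to $u_t-u\ge 0$ on a half-ball in $\Sigma^g_t$ touching $\{x_N=t\}$, which yields $-2\,\partial_{x_N}u(x',t)=\partial_{x_N}(u_t-u)(x',t)<0$.
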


To prove our main results, we first establish some new comparison principles for semilinear problems on general unbounded open sets of $\R^N$, 
and then we use them to start and to complete a modified version of the moving plane method adapted to the geometry of the epigraph $\Omega$. Then, by a delicate analysis based on the translation invariance of the equation and on some fine compactness and regularity arguments, we obtain the desired results of monotonicity.

The flexibility of our methods also allows to prove various extensions of our main results to a large class of merely continuous epigraphs bounded from below (see the class $\mathcal{G}$ introduced in definition \ref{Def-classG} and the list of examples that follows it). 
Those general results (see Theorems \ref{TH_fonc_G}, \ref{TH_fonc_G-bis} and \ref{TH3-disc}) are stated, discussed and proven in Section \ref{SectEXT}. 
Below we illustrate them  with some particularly evocative examples. 

\begin{thm}\label{TH_fonc_G-spec}
Let $N \geq2$ and let $ g : \R^{N-1} \mapsto \R$ be a continuous function such that : 

\begin{enumerate}
\item $N=2 \,\,  $ and $ \quad \lim_{x \mapsto -\infty}g(x)\in (-\infty , + \infty], \quad \lim_{x \mapsto +\infty} g(x) \in (-\infty , + \infty];$
\item $N=2 \,\,  $ and $g$ is quasiconvex (resp. quasiconcave) and bounded from below (in particular monotone or convex functions bounded from below qualify);
\item $N \geq2 \,\,  $ and $g$ of class $C^2$, bounded from below and with bounded second derivatives; 
\item $N \geq2 \,\,  $ and $g$ bounded from below, $g= \varphi \circ \theta$, where $ \theta$ is uniformly continuous on 
$\R^{N-1}$ and $ \varphi  : \R \mapsto \R$ is a continuous bijection;
\item $g(x_1, \ldots, x_{N-1}) = g(x_1,\ldots, x_{n-1})$, where $2 \leq n < N$ and $ g : \R^{n-1} \to \R$  is one of the functions defined in one of the items $(1)-(4). $
\end{enumerate}
Let $\Omega $ be the epigraph defined by $g$ and assume $f \in {Lip}_{loc}([0,+\infty))$ with 
\begin{equation}\label{cond-in-zero-bisbis}
          \liminf_{t \to 0^+} \frac{f(t)}{t} > 0.
\end{equation}

(i) If $u \in C^0(\overline{\Omega}) \cap H^1_{loc}(\overline{\Omega})$ is a  distributional solution to \eqref{NonLin-PoissonEq} which is uniformly continuous on finite strips. \\ 
Then $u$ is strictly increasing in the $x_N$-direction, i.e., $\frac{\partial u}{\partial x_N}>0$ in $\Omega$.

(ii) If $u \in C^2(\Omega) \cap C^0(\overline{\Omega})$ is a classical solution of \eqref{NonLin-PoissonEq} such that $\nabla u \in L^{\infty}(\Omega)$. 
Then $u$ is strictly increasing in the $x_N$-direction, i.e., $\frac{\partial u}{\partial x_N}>0$ in $\Omega$.
\end{thm}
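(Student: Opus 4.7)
The strategy is to verify that each of the five classes of epigraphs in (1)--(5) falls under the scope of the general monotonicity results for the class $\mathcal{G}$ established in Section \ref{SectEXT} (Theorems \ref{TH_fonc_G} and \ref{TH_fonc_G-bis}), or directly under the results already proven for uniformly continuous or coercive epigraphs (Theorems \ref{TH1}, \ref{TH2bis}, \ref{TH2coerc}, and Corollary \ref{Cor1}). The unifying principle is that in each case the geometry of $\Omega$ allows one to start and to close the modified moving plane procedure, either through uniform continuity of $g$, coercivity of the epigraph, a uniform exterior cone condition, or some combination of these.

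For case (1), with $N=2$: if both limits of $g$ are finite, then $g$ is continuous on $\R$ with finite limits at $\pm\infty$ and hence uniformly continuous (standard $\varepsilon$-argument: compactness on a large interval plus arbitrarily small oscillation on the two tails), so Theorem \ref{TH1} applies; if at least one limit equals $+\infty$ the epigraph has a coercive component, being either globally coercive (Theorem \ref{TH2coerc}) or mixed coercive--uniformly continuous, which is a prototypical element of $\mathcal{G}$. For case (2), $g$ quasiconvex bounded from below is monotone on each side of its infimum, so it is either bounded and uniformly continuous or coercive, and analogously for quasiconcave after a reflection; both reduce to the previous theorems. For case (3), $g \in C^2$ with bounded Hessian has globally Lipschitz gradient, so at every boundary point $\partial\Omega$ admits a uniform exterior ball of radius depending only on $\|D^2 g\|_\infty$; this yields the uniform exterior cone condition needed in Theorem \ref{TH2bis}, and one then checks that together with the lower boundedness, the epigraph is in $\mathcal{G}$.

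For case (4), $g = \varphi \circ \theta$ with $\theta$ uniformly continuous and $\varphi$ a continuous bijection (hence strictly monotone): the key observation is that membership in $\mathcal{G}$ is a geometric condition on $\Omega$ which concerns how horizontal slices of $\partial\Omega$ evolve with the height, and $\varphi$ merely reparametrizes this height monotonically. Thus the uniform continuity of $\theta$, which controls horizontal oscillations in $\theta$-units, transfers to the same geometric control for $g$ after absorbing $\varphi$ into a vertical relabeling. Case (5) is a cylindrical reduction: the epigraph is a product $\Omega_n \times \R^{N-n}$, and one verifies that $\mathcal{G}$ is stable under such products and that solutions restrict suitably to the lower-dimensional slices, so that the monotonicity results obtained in items (1)--(4) lift to the cylindrical extension. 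In both parts (i) and (ii), once $\Omega \in \mathcal{G}$ has been established, the conclusion follows from Theorems \ref{TH_fonc_G} and \ref{TH_fonc_G-bis} for the distributional and classical formulations respectively.

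The main technical obstacle is case (4): the bijection $\varphi$ may be arbitrarily steep or flat near critical values, so a direct attempt to recover an analytic modulus of continuity for $g = \varphi\circ\theta$ is hopeless. One must instead exploit that $\mathcal{G}$ is invariant under monotone vertical reparametrizations, so that the geometric information encoded by the uniform continuity of $\theta$ survives the composition with $\varphi$ even though it is destroyed at the level of $g$ itself. This is the step where the flexibility of our moving-plane framework, as opposed to older approaches that require Lipschitz or graph-regularity assumptions on $g$, is crucially used; cases (3) and (5) also require verifying some closure properties of $\mathcal{G}$, but these are routine by comparison.
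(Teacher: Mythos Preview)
Your overall strategy---verify that each of the five cases gives a function $g$ in the class $\mathcal{G}$ and then invoke Theorem~\ref{TH_fonc_G}---is exactly the paper's approach. However, your treatment of case~(3) contains a genuine gap.

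For case~(3) you argue that a $C^2$ function with bounded Hessian yields a uniform exterior ball (hence cone) condition, and then say ``one then checks that together with the lower boundedness, the epigraph is in $\mathcal{G}$.'' But the uniform exterior cone condition does \emph{not} by itself imply $g\in\mathcal{G}$, and none of the theorems you cite closes the argument: Theorem~\ref{TH2bis} requires $g$ to be uniformly continuous (which fails already for $g(x)=x^2$), and Theorem~\ref{TH_fonc_G-bis} requires $g\in\mathcal{G}$, which is precisely what you are trying to establish. The paper's argument is different and non-trivial: by Glaeser's inequality, a nonnegative $C^2$ function with bounded second derivatives has $\sqrt{g}$ globally Lipschitz, so $g\in\mathsf{G}(\R^{N-1})\subset\mathcal{G}$ via the bijection $\phi(t)=\mathrm{sign}(t)\sqrt{|t|}$. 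This is the missing ingredient.

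Two smaller points: your invocation of Theorem~\ref{TH_fonc_G-bis} at the end is misplaced, since the statement of Theorem~\ref{TH_fonc_G-spec} imposes no exterior cone condition---only Theorem~\ref{TH_fonc_G} is needed. And in case~(1), when exactly one limit is $+\infty$ and the other finite, the epigraph is not coercive, so Theorem~\ref{TH2coerc} does not apply; you need the direct verification that such a $g$ satisfies property~$(\mathcal{P})$ (the paper's item~(4) in the list following Definition~\ref{Def-classG}).
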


Typical examples of functions $g$, for which Theorem \ref{TH_fonc_G-spec} applies but not Theorem \ref{TH1} (resp. Corollary \ref{Cor1}), are provided by : 
$g_1 (x_1) = e^{x_1} - 4 \arctan(x_1) - 2 $, $g_2(x_1) = e^{e^{x_1}}$ if $ N=2$ and 
$ g(x_1,\ldots, x_{N-1}) = {(x_1)}^2 + \prod_{j=2}^{N-1} \sin(j x_j), \,$ $ g(x_1,\ldots, x_{N-1})=e^{x_1 + \sum_{j=2}^{N-1} \cos^j(x_j)}$ if $ N \geq 3$. 

\medskip

\begin{thm}\label{TH_fonc_G-bis-spec}
Let $N \geq 2$ and let $\Omega $ be as in the statement of Theorem \ref{TH_fonc_G-spec}.
Also suppose that $\Omega$ satisfies a uniform exterior cone condition.

(i) Assume $ f \in {Lip}([0,+\infty))$ with $f(0) \geq 0$ and let $u \in C^0(\overline{\Omega}) \cap H^1_{loc}(\overline{\Omega})$ be a distributional solution to \eqref{NonLin-PoissonEq} with at most exponential growth on finite strips.
Then $u$ is strictly increasing in the $x_N$-direction, i.e., $\frac{\partial u}{\partial x_N}>0$ in $\Omega$.

(ii) Assume $ f \in {Lip}_{loc}([0,+\infty))$ with $f(0)\geq0$ and let $u \in C^0(\overline{\Omega}) \cap H^1_{loc}(\overline{\Omega})$ be a distributional solution to \eqref{NonLin-PoissonEq} which is bounded on finite strips.
Then $u$ is strictly increasing in the $x_N$-direction, i.e., $\frac{\partial u}{\partial x_N}>0$ in $\Omega$.
\end{thm}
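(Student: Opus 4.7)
The plan is to deduce Theorem \ref{TH_fonc_G-bis-spec} from the general Theorems \ref{TH_fonc_G} and \ref{TH_fonc_G-bis}, whose scope is the class $\mathcal{G}$ of epigraphs introduced in Definition \ref{Def-classG}. The conclusions (i) and (ii) will follow as soon as one verifies that each of the five configurations listed in Theorem \ref{TH_fonc_G-spec} defines an epigraph lying in $\mathcal{G}$. The uniform exterior cone condition is precisely the extra regularity assumption on $\partial \Omega$ under which the general theorems allow $f$ to be just globally (resp.\ locally) Lipschitz with $f(0)\geq 0$, in analogy with the way Theorems \ref{TH2} and \ref{TH2bis} strengthen Theorem \ref{TH1}.

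First I would verify membership in $\mathcal{G}$ for each of the five configurations. Case (3) is the most immediate: a $C^2$ function with bounded Hessian has globally Lipschitz gradient and at most quadratic growth, which is enough to satisfy the geometric conditions defining $\mathcal{G}$. Case (1) splits: when both limits are finite, continuity of $g$ together with the existence of the limits at $\pm \infty$ forces $g$ to be bounded and uniformly continuous on $\R$; when at least one limit equals $+\infty$, $g$ is coercive along the corresponding ray and is covered by the coercive part of $\mathcal{G}$. Case (2) invokes the structure of one-dimensional quasiconvex (resp.\ quasiconcave) functions bounded from below, which splits $g$ at its global infimum into a monotone decreasing and a monotone increasing branch, each fitting into $\mathcal{G}$. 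Case (5) relies on a product stability of $\mathcal{G}$: if $\Omega_0 \subset \R^{n}$ lies in $\mathcal{G}$, then so does the cylinder $\Omega_0 \times \R^{N-n}$, and the $x_N$-monotonicity of any solution is preserved.

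Case (4) is the main obstacle. Here $g = \varphi \circ \theta$ with $\theta$ uniformly continuous on $\R^{N-1}$ and $\varphi : \R \to \R$ a continuous bijection, so $\varphi$ is a strict homeomorphism but need not be uniformly continuous, and $g$ itself need not be uniformly continuous. The natural idea is to identify the epigraph $\{x_N > \varphi(\theta(x'))\}$ with a level set of the form $\{\varphi^{-1}(x_N) > \theta(x')\}$ or $\{\varphi^{-1}(x_N) < \theta(x')\}$ depending on the monotonicity of $\varphi$; however one cannot change coordinates in $x_N$ without breaking the semilinear equation, so the inclusion in $\mathcal{G}$ has to be verified geometrically, propagating the uniform modulus of $\theta$ through the homeomorphism $\varphi$ on each horizontal slab $\Omega \cap \{x_N < R\}$ of the kind probed by the moving plane machinery. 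Once all five configurations are seen to belong to $\mathcal{G}$, parts (i) and (ii) become immediate consequences of the two parts of Theorem \ref{TH_fonc_G-bis}, and no additional moving plane work is required beyond what is already performed to establish the general theorems.
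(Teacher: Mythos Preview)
Your overall strategy is exactly the paper's: Theorem \ref{TH_fonc_G-bis-spec} is not proved from scratch but is obtained as a specialization of Theorem \ref{TH_fonc_G-bis} once one checks that each of the five configurations of $g$ in Theorem \ref{TH_fonc_G-spec} lies in the class $\mathcal{G}$ of Definition \ref{Def-classG}. The paper carries this out via the list of examples (1)--(7) following Definition \ref{Def-classG}, and then Theorem \ref{TH_fonc_G-bis} applies verbatim.

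However, several of your case verifications are either incomplete or unnecessarily complicated.

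\textbf{Case (4).} This is where you diverge most from the paper, and in the wrong direction. You treat it as ``the main obstacle'' and speak of propagating a modulus of continuity slab by slab. In fact the paper handles it in one line via the auxiliary class $\mathsf{G}(\R^{N-1})$: since $\varphi$ is a continuous bijection of $\R$, so is $\phi:=\varphi^{-1}$, and $\phi\circ g=\theta$ is uniformly continuous, hence in $\mathcal{G}$. The footnoted argument for $\mathsf{G}(\R^{N-1})\subset\mathcal{G}$ then gives $g\in\mathcal{G}$ directly: if $g_k(\cdot)=g(\cdot+x_k)$ is bounded at a point, so is $\theta(\cdot+x_k)$, whence a subsequence converges locally uniformly, and composing back with the homeomorphism $\varphi$ preserves local uniform convergence. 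No slab analysis and no interaction with the PDE are needed.

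\textbf{Case (3).} Your justification (``globally Lipschitz gradient and at most quadratic growth, which is enough'') does not by itself verify property $(\mathcal{P})$; quadratic growth of $g$ is not what drives compactness of the translated family. The paper's argument (item (3b)) uses Glaeser's inequality: for $g\in C^2$ bounded below with $\nabla^2 g\in L^\infty$, the function $\sqrt{g-\inf g}$ is globally Lipschitz, so with $\phi(t)=\mathrm{sign}(t)\sqrt{|t|}$ one gets $\phi\circ(g-\inf g)$ uniformly continuous and hence $g\in\mathsf{G}(\R^{N-1})\subset\mathcal{G}$.

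\textbf{Case (1).} Your dichotomy ``both limits finite $\Rightarrow$ uniformly continuous'' versus ``at least one limit $+\infty$ $\Rightarrow$ coercive'' misses the mixed situation $\ell_-\in\R$, $\ell_+=+\infty$ (or vice versa), where $g$ is neither uniformly continuous nor coercive. The correct verification of $(\mathcal{P})$ here is direct: if $g(\bar x+x_k)$ stays bounded then $(x_k)$ cannot run to the side where the limit is $+\infty$; along any subsequence with $x_k\to+\infty$ (resp. $-\infty$) on the side with finite limit one gets $g_k\to\ell_+$ (resp. $\ell_-$) locally uniformly, and along bounded subsequences one extracts a convergent $x_k\to x_\infty$ and gets $g_k\to g(\cdot+x_\infty)$.

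Once these memberships are established, parts (i) and (ii) follow immediately from Theorem \ref{TH_fonc_G-bis}, exactly as you say.
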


Notice that Theorem \ref{TH_fonc_G-bis-spec} applies (for instance) to epigraphs defined by 
$ g(x_1,\ldots, x_{N}) = e^{e^{x_1}}$ or $ g(x_1,\ldots, x_{N}) =  e^{x_1} - 4 \arctan(x_1) - 2 $ if $ N \geq 1$ and to $g(x_1,\ldots, x_{N}) = x_1^4 + e^{x_2}$ if $ N \geq 2$. 

\medskip

The next result applies to solutions of \eqref{NonLin-PoissonEq} where $\Omega$ is merely a continuous epigraph and 
$f$ is a non-increasing function, possibly discontinuous, and with no restriction on the sign of $f(0)$ (see Theorem \ref{TH3-disc} in Section 5). 

\begin{thm}\label{TH3-disc-spec}
Let $\Omega$ be a continuous epigraph bounded from below and let $f: [0,\infty)  \mapsto \R$ be any non-increasing function.
Let $u \in C^0(\overline{\Omega}) \cap H^1_{loc}(\overline{\Omega})$ be a distributional solution to \eqref{NonLin-PoissonEq} 
with subexponential growth on finite strips 
\footnote{i.e., for any $R>0$, 
$$ \limsup_{\overset {\vert x \vert \to \infty,} {x \in \Omega \cap \left \lbrace x_N <R \right\rbrace}} \frac{\ln u(x)}{\vert x \vert} \leq 0. $$}.  \\   
Then $u$ is non-decreasing, i.e., $\frac{\partial u}{\partial x_N} \geq 0$ in $\Omega$.\footnote{\, Note that $ u \in C^1(\Omega),$ since $f(u) \in L^{\infty}_{loc}(\Omega)$.}  \\ 
Moreover, if $f \in {Lip}_{loc},$ then $u$ is strictly increasing, i.e., $\frac{\partial u}{\partial x_N} > 0$ in $\Omega$.
\end{thm}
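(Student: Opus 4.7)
My plan is to run the moving plane method in the $x_N$-direction, using the monotonicity of $f$ to reduce the comparison step at each stage to a purely linear subharmonic problem on a finite-width slab. Let $c_0:=\inf_{\R^{N-1}} g\in\R$. For each $\lambda>c_0$, set $\Sigma_\lambda:=\Omega\cap\{x_N<\lambda\}$ and introduce the reflected solution $u_\lambda(x',x_N):=u(x',2\lambda-x_N)$. The bound $g\geq c_0$ ensures that reflection across $\{x_N=\lambda\}$ maps $\Sigma_\lambda$ into $\Omega$, so $u_\lambda\in H^1_{loc}(\Sigma_\lambda)\cap C^0(\overline{\Sigma_\lambda})$ is a distributional solution of $-\Delta u_\lambda=f(u_\lambda)$ in $\Sigma_\lambda$. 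The difference $w_\lambda:=u-u_\lambda$ vanishes on $\Omega\cap\{x_N=\lambda\}$ and is non-positive on $\partial\Omega\cap\{x_N<\lambda\}$ (where $u=0$ and $u_\lambda>0$).

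Since $f$ is non-increasing, $f(u)\leq f(u_\lambda)$ on the set $\{w_\lambda>0\}$. Testing the weak formulations for $u$ and $u_\lambda$ against $\phi\,\eta_\e(w_\lambda)$, where $\phi\in C^\infty_c(\Sigma_\lambda)$ is non-negative and $\eta_\e$ is a smooth non-decreasing approximation of the Heaviside function, and letting $\e\to 0$, yields $-\Delta w_\lambda^+\leq 0$ distributionally in $\Sigma_\lambda$. Thus $w_\lambda^+\geq 0$ is a continuous subharmonic function on $\Sigma_\lambda$, vanishing on $\partial\Sigma_\lambda$ and inheriting the subexponential growth of $u$. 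Since $\Sigma_\lambda$ is contained in the slab $S_\lambda:=\{c_0<x_N<\lambda\}$ of finite width $\lambda-c_0$, a Phragm\'en-Lindel\"of argument applies: using the family of harmonic barriers $V_\beta(x):=e^{\beta\cdot x'}\sin\bigl(|\beta|(\lambda-x_N)\bigr)$ with $0<|\beta|<\pi/(\lambda-c_0)$, for every $\e>0$ the function $w_\lambda^+-\e V_\beta$ is subharmonic on $\Sigma_\lambda$, non-positive on $\partial\Sigma_\lambda$, and eventually negative outside a large ball (because $w_\lambda^+$ grows subexponentially while $V_\beta$ grows exponentially in the direction $\beta$). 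The maximum principle on bounded pieces then forces $w_\lambda^+\leq\e V_\beta$, and letting $\e\to 0$ (after varying $\beta$ to cover every direction of $\R^{N-1}$) gives $w_\lambda^+\equiv 0$. Consequently $u\leq u_\lambda$ in $\Sigma_\lambda$; choosing $\lambda=(s+t)/2$ for arbitrary $c_0<s<t$ with $(x',s),(x',t)\in\Omega$ yields $u(x',s)\leq u(x',t)$, so $\frac{\partial u}{\partial x_N}\geq 0$ in $\Omega$.

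Under the additional hypothesis $f\in{Lip}_{loc}$, elliptic regularity upgrades $u$ to a classical $C^2(\Omega)$ solution, so $v:=\frac{\partial u}{\partial x_N}\in C^1(\Omega)$ is non-negative and satisfies weakly a linear elliptic equation of the form $-\Delta v=c(x)v$, with $c\in L^\infty_{loc}(\Omega)$ extracted from the Lipschitz bound on $f$. The strong maximum principle in the connected domain $\Omega$ then implies either $v>0$ everywhere or $v\equiv 0$; in the latter case $u$ would depend only on $x'$, and then the boundary condition $u=0$ on $\partial\Omega=\{x_N=g(x')\}$ would force $u\equiv 0$, contradicting $u>0$ in $\Omega$. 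Hence $\frac{\partial u}{\partial x_N}>0$. I expect the main obstacle to be the Phragm\'en-Lindel\"of step in the distributional low-regularity framework: since $\partial\Omega$ is merely continuous, the barrier comparison must be carried out on $\Sigma_\lambda$ itself without ever extending $w_\lambda^+$ across $\partial\Omega$, relying only on the continuity of $u$ up to the boundary to ensure the correct vanishing of $w_\lambda^+$ on the whole of $\partial\Sigma_\lambda$.
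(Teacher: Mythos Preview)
Your overall architecture---moving planes plus a comparison in the slab $\{c_0<x_N<\lambda\}$ exploiting only the monotonicity of $f$---is sound, and it is a genuinely different route from the paper's.  The paper does not reduce to a subharmonicity statement for $w_\lambda^+$ via Kato's inequality and a Phragm\'en--Lindel\"of barrier; instead it invokes its own comparison principle (Theorem~\ref{thComp3}), whose proof is energy-based: one tests the equation for $u-u_\lambda$ against $(u-u_\lambda)^+\Psi^2$ with a compactly supported cutoff $\Psi(x')$, uses Poincar\'e's inequality fibre-by-fibre in the $x_N$-variable, and then runs a quantitative iteration on the function $w(R)=\int_{\{|x'|<R\}}((u-u_\lambda)^+)^2$ to force it to vanish.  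For the strict inequality the paper applies the Hopf lemma to $u_{\theta}-u$ at each level $\theta$, whereas you differentiate and invoke the strong maximum principle on $\partial_N u$; both are legitimate.

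There is, however, a concrete gap in your Phragm\'en--Lindel\"of step.  The barrier $V_\beta(x)=e^{\beta\cdot x'}\sin\bigl(|\beta|(\lambda-x_N)\bigr)$ grows exponentially \emph{only in the half-space $\{\beta\cdot x'>0\}$} and decays to zero in the direction $-\beta$.  Hence the assertion that $w_\lambda^+-\varepsilon V_\beta$ is ``eventually negative outside a large ball'' is false for a single $\beta$: in the direction $-\beta$ the difference can stay positive at every scale, so no bounded subdomain captures the maximum and the maximum principle does not close.  ``Varying $\beta$ over all directions'' does not repair this, because each application of the maximum principle already requires a barrier dominating $w_\lambda^+$ at infinity in \emph{every} direction simultaneously.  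The fix is to choose a barrier that grows exponentially in all $x'$-directions while remaining positive on the slab, e.g.
\[
V(x)=\Bigl(\textstyle\prod_{j=1}^{N-1}\cosh(\alpha x'_j)\Bigr)\sin\bigl(\alpha\sqrt{N-1}\,(\lambda-x_N)\bigr),\qquad 0<\alpha<\frac{\pi}{\sqrt{N-1}\,(\lambda-c_0)},
\]
which is harmonic, strictly positive on $\{c_0<x_N<\lambda\}$, vanishes on $\{x_N=\lambda\}$, and satisfies $V(x)\ge c\,e^{\alpha|x'|/\sqrt{N-1}}$, so that the subexponential hypothesis on $u$ now genuinely forces $w_\lambda^+-\varepsilon V<0$ outside a large ball.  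With this correction your argument goes through.
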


We conclude the introduction by observing that

1) the new comparisons principles we have demonstrated  have also allowed us to prove some new results of uniqueness and symmetry for solutions (possibly unbounded and sign-changing) to the homogeneous Dirichlet BVP for the semilinear Poisson equation in fairly general unbounded domains (see Corollary \ref{Unic-Dirichlet} and Corollary \ref{Sym-Dirichlet} in Section \ref{SectA});

2) as an application of our new monotonicity theorems, we also prove some new classification and/or non-existence results for solutions to the nonlinear problem \eqref{NonLin-PoissonEq} (see Section \ref{Sect-Applic}). 

\smallskip

For the reader's convenience, the notations used in the paper are collected in Section \ref{Notations}. 

\smallskip

The paper is organized as follows :

\begin{enumerate}

\small {

\item[1.]  \textit{Introduction and main results} 

\smallskip

\item[2.]  \textit{Some new comparison principles on unbounded domains and their applications to the uniqueness and symmetry of solutions to the semilinear Poisson equation} 

\smallskip

\item[3.] \textit{Some uniform estimates in unbounded domains}

\smallskip

\item[4.] \textit{Proofs}

\smallskip

\item[5.] \textit{Extensions to merely continuous epigraphs bounded from below and further observations}

\smallskip

\item[6.] \textit{Some applications to classification and non-existence results}

\smallskip

\item[7.] \textit{Some examples}

\smallskip

\item[8.] \textit{Notations}

}

\end{enumerate}

\section{Some new comparison principles on unbounded domains and their applications to the uniqueness and symmetry of solutions to the semilinear Poisson equation} \label{SectA}
 
In this section we prove some new comparison principles for solutions of semilinear problems on unbounded open sets, whose section has some ''good properties". Those results recover and considerably improve the comparison principle on strips (or on open subsets included in strips) proved by the second author in \cite{Far-Sandro}. Their role in this paper is twofold:

1) they are crucial for proving our monotonicity results on epigraphs,

2) they  allow us to prove some new results of uniqueness and symmetry for solutions (possibly unbounded and sign- changing) to the homogeneous Dirichlet BVP for the semilinear Poisson equation in fairly general unbounded domains.  

To this end we need the following

\begin{Defi} \label{Def-goodsection} 
Assume $N \geq 2.$ Let $\Omega$ be an open subset of  $\R^N$ and let $\nu$ be a unit vector of $\R^N$. \\ We denote by $\R \nu$ the vector space spanned by the unit vector $\nu$ and by $\{\nu\}^{\bot}$ the orthogonal complement of $\nu$ in $\R^N$.

(i) We shall say that $\Omega $ is \texttt{locally bounded in the direction $\nu$} if
\begin{equation}\label{cylindre}
\begin{array}{ll@{\;}l@{\;}}
\forall R >0 \qquad & C_{\nu}(R)=(B'(0',R) \times \R \nu) \cap \Omega & \quad \text{is a bounded subset of } \R^N.
\end{array}
\end{equation}
Here $B'(0',R)$ denotes the $N-1$-dimensional open ball of radius $R$ centered at the origin $0'$ of $\{\nu\}^{\bot}$.

(ii) For every $x' \in \{\nu\}^{\bot}$ let us define the set $S^{\nu}_{x'}:= (\{x'\} \times\R \nu)\cap \Omega.$

We shall say that $\Omega $ has \texttt{bounded  section in the direction $\nu$} if 
\begin{equation}\label{sup}
\mathtt{S}_{\nu}(\Omega):=\displaystyle\sup_{x' \in \{\nu\}^{\bot}}\mathcal{L}^{1} (S^{\nu}_{x'})<+\infty , 
\end{equation}
where $\mathcal{L}^{1}$ denotes the 1-dimensional Lebesgue measure. 

The positive number $\mathtt{S}_{\nu}(\Omega)$  will be called  the  \texttt{section of $\Omega$ in the direction $\nu$}.

(iii) We shall say that $\Omega $ has \texttt{good section in the direction $\nu$} if it is locally bounded
in the direction $\nu$ and if it also has bounded  section in the direction $\nu$ $($that is, if it satisfies both \eqref{cylindre} and  \eqref{sup}$)$. 
\end{Defi} 
 
\medskip	
	
The following remark shows how large the families of sets defined above are.	
	
\medskip		
	
\begin{rem}\label{ex-good-sets}

(i) Open sets (possibly unbounded and not necessarily connected) that are bounded in a fixed direction $\nu$ $($i.e., open sets such that, up to a rotation, are included in a finite strip $\{x=(x',x_N) \in \R^N  \,  : \, \alpha <x_N < \beta \}$, with $\alpha, \beta \in \R)$ have good section in the direction $\nu$. Indeed, they clearly satisfy \eqref{cylindre} as well as \eqref{sup} with 
$\mathtt{S}_{\nu}(\Omega) \leq \beta - \alpha$. 

\smallskip

(ii) The family of open sets with good section in a fixed direction $\nu$ is much larger than the one of sets that are bounded in the direction $\nu$. Indeed, the unbounded open connected set $\Omega_{1}=\{(x,y) \in \R^{2}   \,  : \,  |x|-h(x)<y<|x|+h(x) \} \cup \{(x,y) \in \R^{2}   \,  : \, - |x| -h(x) <y<- |x| + h(x)\} \cup \{(x,y) \in \R^{2}, |y|<1 \}$, where 
$h(x)=sinh^{-1}(e^{-|x|}),$$($see Figure \ref{fig:ex1}$)$ has good section in the direction $e_{2} = (0,1)$, with $\mathtt{S}_{e_2}(\Omega_1)\leq 4$, but it is unbounded in any direction. Actually, it is not contained in any affine half-plane. Also note that the Lebesgue measure of $\Omega_{1}$ is infinite.

\begin{figure}[!h]
	\centering
	\includegraphics[width=0.6\textwidth]{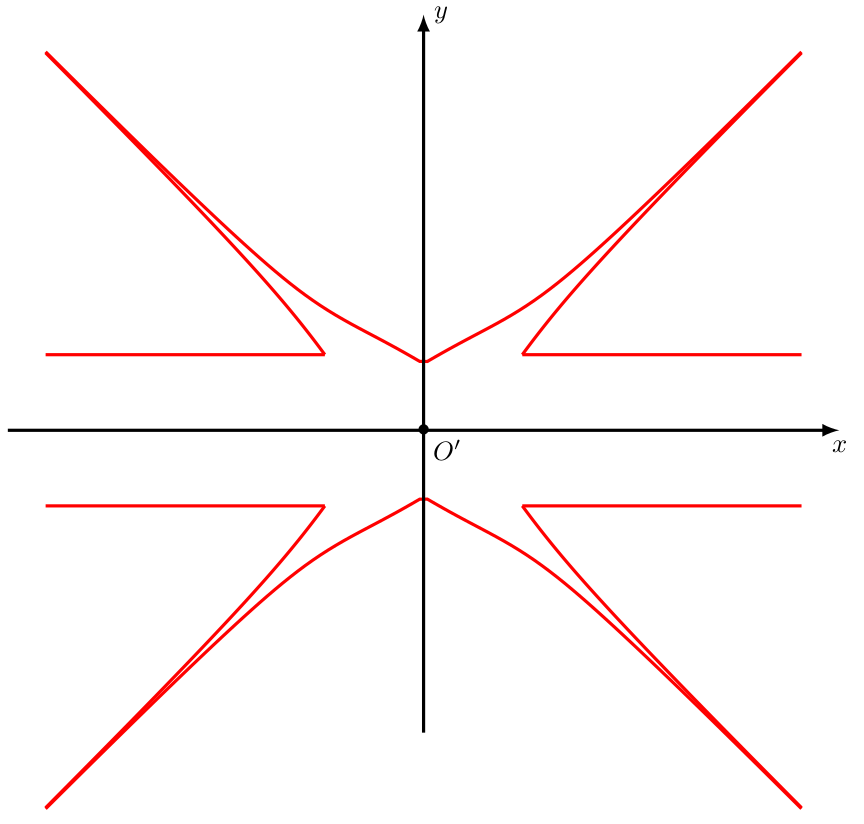}
	\caption{$\Omega_{1}$}
	\label{fig:ex1}
\end{figure}

\smallskip

(iii) We also note that  open sets with bounded section in a fixed direction $\nu$  are not necessarily locally bounded in the direction $\nu$ and vice versa.  Consider the open sets $\Omega_2 = \bigcup_{n \geq 1} \left\lbrace (x,y) \in \R^{2}   \,  : \,
n  < y  <  n + \frac{1}{2^n} \right\rbrace $ and $\Omega_3 =  \left\lbrace (x,y) \in \R^{2}   \,  : \,  0 < y  <  x^2 \right\rbrace$, then $\Omega_2$ has bounded section in the direction $e_2$, but  it is not locally bounded in that direction, while $\Omega_3$ is locally bounded in the direction $e_2$, but it has not bounded section in the direction $e_2$. 
\end{rem}

\medskip 
 
Our first comparison principle is the content of the next result. 
 
\begin{thm}\label{thComp1}
Assume $N \geq 2.$ Let $\Omega$ be an open subset of $\R^N$ and let $\nu$ be a unit vector of $\R^N$. 

\smallskip

(i) Assume that $\Omega$ has good section in the direction $\nu$. 
Let $f \in {Lip}_{loc}(\R)$, $M>0$ and $u,v \in H^{1}_{\text{loc}}(\overline{\Omega}) \cap C^{0}(\overline{\Omega})$  satisfying  
		\begin{equation}
			\left\{
			\begin{array}{cll}
				- \Delta u-f(u)\leq -\Delta v -f(v) & \text{in}& \mathcal{D}'(\Omega),\\
				|u|,|v|\leq M & \text{in} & \Omega,\\
				u \leq v & \text{on} & \partial \Omega.
			\end{array}
			\right.\label{pb1}
		\end{equation} 
		Then, there exists $\varepsilon=\varepsilon(f,M)>0$ such that 
		\begin{equation}
			\mathtt{S}_{\nu}(\Omega)< \varepsilon \implies u \leq v \,\, \text{in} \,\, \Omega.\label{inégalitée1}
		\end{equation}
		
\smallskip		
		
(ii) Assume that $\Omega$ has good section in the direction $\nu$. 
Let $f \in C^{0}(\R)$ be a non-increasing function, $M>0$ and $u,v \in H^{1}_{\text{loc}}(\overline{\Omega}) \cap C^{0}(\overline{\Omega})$ satisfying  
		\begin{equation}
			\left\{
			\begin{array}{cll}
				- \Delta u-f(u)\leq -\Delta v -f(v) & \text{in}&  \mathcal{D}'(\Omega),\\
				|u|,|v|\leq M & \text{in} & \Omega,\\
				u \leq v & \text{on} & \partial \Omega.
			\end{array}
			\right.\label{pb2}
		\end{equation} 
Then, $ u \leq v $ in $ \Omega$. 	
		
\smallskip		
		
(iii) Assume that $\Omega$ has bounded section in the direction $\nu$. 
Let $f \in {Lip}_{loc}(\R)$, $M>0$ and $u,v \in Lip_{\text{loc}}(\overline{\Omega})$ satisfying 
		\begin{equation}
			\left\{
			\begin{array}{cll}
				- \Delta u-f(u)\leq  -\Delta v -f(v) & \text{in}&  \mathcal{D}'(\Omega),\\
				|u|,|v| \leq  M & \text{in} & \Omega,\\
				| \nabla u|,| \nabla v | \leq M & \text{a.e.  in} & \Omega,\\
				u \leq v & \text{on} & \partial \Omega.
			\end{array}
			\right.\label{pb3}
		\end{equation} 
		Then, there exists $\varepsilon=\varepsilon(f,M)>0$ such that 
		\begin{equation}
			\mathtt{S}_{\nu}(\Omega)< \varepsilon \implies u \leq  v \,\, \text{in} \,\, \Omega.\label{inégalitée3}
		\end{equation}		
\end{thm}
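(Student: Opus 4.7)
I would prove all three parts by the same scheme: set $w := (u-v)^+$, note that $w$ vanishes on $\partial\Omega$ in the trace sense (since $u\leq v$ there), and show $w \equiv 0$ via a test-function argument combined with a one-dimensional Poincaré inequality in the direction $\nu$. The key structural input is a \emph{slicewise Poincaré inequality}: for every $x' \in \{\nu\}^\perp$ the set $S^\nu_{x'}$ is a countable disjoint union of open intervals whose total length is bounded by $\mathtt{S}_\nu(\Omega)<\infty$, so each such interval is bounded with length $\leq \mathtt{S}_\nu(\Omega)$, and on each of them $w$ vanishes at both endpoints (which lie on $\partial\Omega$). The classical one-dimensional Dirichlet-Poincaré inequality on each interval, followed by Fubini in $x'$, yields $\int_\Omega \psi^2 \leq (\mathtt{S}_\nu(\Omega)/\pi)^2 \int_\Omega (\partial_\nu \psi)^2$ for every admissible $\psi$ vanishing on $\partial\Omega$.

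Next I would take a cutoff $\phi_R \in W^{1,\infty}(\{\nu\}^\perp)$ with $\phi_R \equiv 1$ on $B'(0',R)$, $\mathrm{supp}\,\phi_R \subset B'(0',2R)$ and $|\nabla \phi_R| \leq 2/R$, lifted to $\R^N$ by $\phi_R(x) := \phi_R(x')$ so that $\partial_\nu \phi_R \equiv 0$. In (i) and (ii) the local-boundedness condition \eqref{cylindre} of good section makes $w\phi_R^2$ directly admissible as a test function; in (iii), where only bounded section is assumed, I would introduce an auxiliary cutoff $\eta_k$ in the $\nu$-direction and use the $L^\infty$ bounds on $u,v,\nabla u,\nabla v$ together with dominated convergence to pass to the limit $\eta_k \to 1$. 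Plugging $w\phi_R^2$ into the weak inequality $-\Delta(u-v) \leq f(u)-f(v)$, using $\nabla(u-v) = \nabla w$ on $\{u>v\}$, and rearranging via the identity $\int \phi_R^2|\nabla w|^2 + 2\int w\phi_R \nabla w \cdot \nabla\phi_R = \int|\nabla(w\phi_R)|^2 - \int w^2|\nabla\phi_R|^2$, I obtain
\[
\int_\Omega |\nabla(w\phi_R)|^2 - \int_\Omega w^2|\nabla\phi_R|^2 \,\leq\, \int_\Omega (f(u)-f(v))\, w\, \phi_R^2.
\]
In case (ii) the monotonicity of $f$ renders the right-hand side non-positive. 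In cases (i) and (iii), setting $L := \mathrm{Lip}(f|_{[-M,M]})$, the right-hand side is bounded by $L \int w^2\phi_R^2 \leq L(\mathtt{S}_\nu(\Omega)/\pi)^2 \int |\nabla(w\phi_R)|^2$ via slicewise Poincaré applied to $w\phi_R$; picking the threshold $\varepsilon := \pi/\sqrt{L}$ ensures that under $\mathtt{S}_\nu(\Omega) < \varepsilon$ the coefficient $c := 1 - L(\mathtt{S}_\nu(\Omega)/\pi)^2 > 0$ allows the absorption, yielding $c \int |\nabla(w\phi_R)|^2 \leq \int w^2|\nabla\phi_R|^2$.

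Applying slicewise Poincaré one more time to $w\phi_R$ and setting $H(R) := \int_{C_\nu(R) \cap \Omega} w^2$, I arrive at an iteration inequality $H(R) \leq (K/R^2)\, H(2R)$ with $K$ a constant depending only on $\mathtt{S}_\nu(\Omega)$ and, in (i),(iii), on $L,M$. Combined with the a priori polynomial-growth bound $H(R) \leq 4M^2 \mathtt{S}_\nu(\Omega)\, |B'(0',R)| \leq C R^{N-1}$ (from $|w| \leq 2M$ and bounded section) and iterating $k$ times along $R_k = 2^k R_0$, one obtains $H(R_0) \leq CR_0^{N-1}(K\cdot 2^{N-1}/R_0^2)^k\, 2^{-k(k-1)}$, which tends to $0$ as $k \to \infty$ since the super-exponential factor $2^{-k(k-1)}$ dominates any exponential in $k$. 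Thus $H(R_0) = 0$ for every $R_0 > 0$, and continuity of $w$ forces $w \equiv 0$ in $\Omega$, proving $u \leq v$. The main obstacle I foresee is the rigorous admissibility of the test function in (iii): one must check that the auxiliary $\nu$-cutoff contributes a vanishing error in the limit, crucially using the global Lipschitz bounds on $u,v$. The sharp threshold $\varepsilon = \pi/\sqrt{L}$ and the super-exponential iterative decay are routine once the scheme is in place.
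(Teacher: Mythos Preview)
Your argument is correct and follows the same route as the paper: a cutoff $\phi_R$ depending only on the $\{\nu\}^\perp$-variable, the slicewise one-dimensional Dirichlet--Poincar\'e inequality on each connected component of $S^\nu_{x'}$, and an iteration of the resulting inequality against the a~priori polynomial bound $H(R)\le 4M^2\,\mathtt{S}_\nu(\Omega)\,\omega_{N-1}R^{N-1}$.

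The differences are only in packaging. You use the exact identity $\int\phi_R^2|\nabla w|^2 + 2\int w\phi_R\nabla w\cdot\nabla\phi_R = \int|\nabla(w\phi_R)|^2 - \int w^2|\nabla\phi_R|^2$, whereas the paper handles the cross term by Young's inequality; this is why your threshold $\varepsilon=\pi/\sqrt{L}$ comes out slightly sharper than the paper's $\pi/\sqrt{2L_{f,M}}$. You run a self-contained geometric iteration $R\mapsto 2R$ and read off the super-exponential decay directly, while the paper iterates arithmetically $a\mapsto a+h$ and appeals to an external growth lemma to obtain the contradictory exponential lower bound. In part~(iii) your auxiliary $\nu$-cutoff with passage to the limit is a legitimate alternative to the paper's direct verification that $(u-v)^+\Psi^2\in H^1_0(\Omega\cap(B'\times\R))$; both rely on the same ingredients, namely the global $L^\infty$ bounds on $u,v,\nabla u,\nabla v$ and the finite-measure observation $\mathcal L^N(\Omega\cap(B'\times\R))\le \mathtt{S}_\nu(\Omega)\,\mathcal L^{N-1}(B')$.
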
 
	 
More generally, we have the following results\footnote{\, For sake of clarity, Theorem \ref{thComp2} and Theorem \ref{thComp3} are stated with respect to the direction $e_N$, the last vector of the canonical base of $\R^N$. Since the considered problems are invariant by rotation, it is clear that they also hold true if the unit vector $e_N$ is replaced by any unit vector $\nu$ (and with the corresponding natural modification  of condition \eqref{eq3}  in the case of Theorem \ref{thComp2}).}
	
\begin{thm}\label{thComp2}
Assume $ \gamma \geq 0$, $\delta \geq 0$, $N \geq 2$ and let $\Omega $ be an open subset of $ \R^N$ with good section in the direction $e_N$, the last vector of the canonical base of $\R^N$, such that   
		\begin{equation}\label{eq3}
				\sup_{x' \in\R^{N-1}} \Big(\disp\int_{S^{e_N}_{x'}} |x_N|^{2 \delta} e^{2 \gamma|x_N|}d x_N\Big) < + \infty . 
		\end{equation}	
		Let $f = f_1 + f_2,$ with $f_1 \in {Lip}(\R)$ and $f_2 : \R \mapsto \R$ be a non-increasing function.
		
		Let $a>0$ and $u,v \in H^{1}_{\text{loc}}(\overline{\Omega})\cap C^{0}(\overline{\Omega})$ such that 
		\begin{equation*}
			\left\{
			\begin{array}{cll}
				- \Delta u-f(u)\leq  -\Delta v -f(v) & \text{in}&  \mathcal{D}'(\Omega),\\
				|u|,|v|\leq a |x|^{\delta} e^{\gamma |x|} & \text{in} & \Omega,\\
				u \leq v & \text{on} & \partial \Omega.
			\end{array}
			\right.
		\end{equation*} 
Then, there exists $\varepsilon=\varepsilon(L_{f_1},\gamma)>0$ such that
	\begin{equation*}
		\mathtt{S}_{e_N}(\Omega)< \varepsilon \implies u \leq v \,\, \text{in} \,\, \Omega.
	\end{equation*}
Here, $L_{f_1}$ denotes the Lipschitz constant of $f_1$.  	
	\end{thm}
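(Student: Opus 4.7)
The plan is to adapt the weighted-test-function argument behind Theorem~\ref{thComp1} so as to cope with the polynomial--exponential growth permitted here. Setting $\varphi := (u-v)^+$, the hypothesis on $|u|,|v|$ together with \eqref{eq3} and the elementary inequalities $\sqrt{|x'|^2+|x_N|^2} \leq |x'|+|x_N|$ and $|x|^{2\delta} \leq 2^{2\delta}(|x'|^{2\delta}+|x_N|^{2\delta})$ will be used to establish, for every $\mu>\gamma$, the fundamental integrability
\begin{equation*}
\int_\Omega e^{-2\mu|x'|}\varphi^2\,dx < +\infty,
\end{equation*}
which is what makes an exponentially weighted test function admissible.

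Next, pick $\chi(x') := e^{-\mu|x'|}$ with $\mu>\gamma$ to be fixed below, and let $\eta_R$ be the usual Lipschitz cutoff equal to $1$ on $B(0,R)\subset \R^N$, vanishing outside $B(0,2R)$, with $|\nabla\eta_R|\leq 2/R$. Since $\varphi \in H^1_{loc}(\overline{\Omega})\cap C^0(\overline{\Omega})$ vanishes on $\partial\Omega$, the non-negative function $\chi^2\eta_R^2\varphi$ is a legitimate test function (via standard truncation and mollification) for the distributional inequality $-\Delta(u-v)\leq f(u)-f(v)$. Using that $\nabla(u-v)^-\cdot\nabla(\chi^2\eta_R^2\varphi)=0$ a.e., the Lipschitz bound $(f_1(u)-f_1(v))\varphi\leq L_{f_1}\varphi^2$ and the monotonicity bound $(f_2(u)-f_2(v))\varphi\leq 0$ (both valid on $\{u>v\}$), together with two applications of Young's inequality to absorb the cross terms, one obtains
\begin{equation*}
\tfrac{1}{2}\int_\Omega \chi^2\eta_R^2|\nabla\varphi|^2\,dx \leq (L_{f_1}+4\mu^2)\int_\Omega \chi^2\eta_R^2\varphi^2\,dx + 4\int_\Omega \chi^2|\nabla\eta_R|^2\varphi^2\,dx.
\end{equation*}

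For every $x'\in \R^{N-1}$, $\varphi(x',\cdot)$ vanishes on $\partial S^{e_N}_{x'}$, so the one-dimensional Poincar\'e inequality on each component of $S^{e_N}_{x'}$ gives $\int_{S^{e_N}_{x'}}(\eta_R\varphi)^2\,dx_N \leq \pi^{-2}\mathtt{S}_{e_N}(\Omega)^2\int_{S^{e_N}_{x'}}(\partial_N(\eta_R\varphi))^2\,dx_N$; multiplying by $\chi^2(x')$, integrating in $x'$, and splitting $\partial_N(\eta_R\varphi)$ via Young, one estimates
\begin{equation*}
\int_\Omega\chi^2\eta_R^2\varphi^2\,dx \leq C\mathtt{S}_{e_N}(\Omega)^2\Big(\int_\Omega\chi^2\eta_R^2|\nabla\varphi|^2\,dx + \int_\Omega\chi^2|\nabla\eta_R|^2\varphi^2\,dx\Big).
\end{equation*}
Fixing $\mu:=\gamma+1$ (so that $\mu$ depends on $\gamma$ only) and imposing $\mathtt{S}_{e_N}(\Omega)<\varepsilon$ with $\varepsilon=\varepsilon(L_{f_1},\gamma)>0$ small enough, substitution in the preceding display yields
\begin{equation*}
c_0\int_\Omega\chi^2\eta_R^2|\nabla\varphi|^2\,dx \leq C_0\int_\Omega\chi^2|\nabla\eta_R|^2\varphi^2\,dx
\end{equation*}
with $c_0>0$. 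Since $|\nabla\eta_R|^2\leq 4R^{-2}\mathbf{1}_{\{R\leq|x|\leq 2R\}}$ and the first step ensures $\int_\Omega \chi^2\varphi^2<\infty$, the right-hand side tends to $0$ as $R\to\infty$; monotone convergence then forces $\int_\Omega\chi^2|\nabla\varphi|^2=0$, hence $\nabla\varphi\equiv 0$ a.e.\ on $\Omega$, which combined with $\varphi|_{\partial\Omega}=0$ yields $\varphi\equiv 0$, i.e.\ $u\leq v$ in $\Omega$.

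The main obstacle is the joint use of hypothesis \eqref{eq3} and the growth bound on $u,v$ to establish both the $L^2$-integrability of $\chi\varphi$ and the vanishing as $R\to\infty$ of the cutoff error term: it is precisely this integrability that lets one run the standard absorption-with-Poincar\'e mechanism on an unbounded set and track the explicit dependence of $\varepsilon$ on $L_{f_1}$ and $\gamma$ only. The accompanying rigorous verification that $\chi^2\eta_R^2\varphi$ is an admissible test function, despite $\varphi$ being only $H^1_{loc}\cap C^0$, is a routine truncation-and-approximation exercise and is not specific to the present setting.
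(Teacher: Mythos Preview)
Your argument is correct and takes a genuinely different route from the paper's. The paper follows the template of Theorem~\ref{thComp1}: it uses compactly supported cutoffs $\Psi(x')$ in the $x'$-variables only, derives the inequality $w(a)\le \frac{1}{\alpha h^{2}+1}\,w(a+h)$ for $w(r)=\int_{C_{e_N}(r)}((u-v)^+)^2$, and then invokes an iteration lemma to produce an exponential lower bound $w(R)\gtrsim e^{R/h}$, which is contradicted by the upper bound $w(R)\le C R^{2\delta+N-1}e^{2\gamma R}$ obtained from the growth hypothesis and \eqref{eq3}. Your approach instead builds the exponential weight $e^{-2\mu|x'|}$ directly into the test function: the growth hypothesis and \eqref{eq3} are used once, up front, to secure $\int_\Omega e^{-2\mu|x'|}\varphi^2<\infty$, after which a single absorption-with-Poincar\'e step and $R\to\infty$ finish the job. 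Your method is more self-contained (no external iteration lemma) and exposes clearly that the role of \eqref{eq3} is precisely to make the weighted $L^2$ norm finite; the paper's method has the advantage of reusing verbatim the machinery already set up for Theorem~\ref{thComp1} and yields the explicit threshold $\varepsilon=\pi/\sqrt{16(e-1)\gamma^{2}+2L_{f_1}}$. The two values of $\varepsilon$ differ only by absolute constants, and both depend only on $L_{f_1}$ and $\gamma$ as claimed.
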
 
	
\medskip	

Some remarks are in order 

\medskip

\begin{rem}\label{rem-striscia-exp}

(i) Any open set $\Omega$ included in a strip $\{x=(x',x_N) \in \R^N \,  : \, \alpha <x_N < \beta \}$, with $\alpha, \beta \in \R$, clearly satisfies the assumption \eqref{eq3} for every $\gamma, \delta \geq 0$. 

\smallskip

(ii) Notice that the preceding theorem applies to the set $\Omega_1$ described in Remark \ref{ex-good-sets}. Indeed, 
$\Omega_1$ satisfies the assumption \eqref{eq3} for any $\delta \geq 0$ and any $\gamma \in [0,1/2]$.  

\smallskip

(iii) The previous comparison result applies, in particular, to the case where $f$ is globally Lipschitz-continuous. We shall use it in this form to prove Theorem \ref{TH2}.
\end{rem}

\medskip

When $f$ is non-increasing on $\R$, the comparison principle holds even without the smallness assumption on the section of $\Omega$.  More precisely we have the following

\medskip

\begin{thm}\label{thComp3}
Assume $N \geq 2$ and let $\Omega $ be an open subset of $ \R^N$ bounded in the direction $e_N$, the last vector of the canonical base of $\R^N$. Let $f : \R \mapsto \R$ be a non-increasing function and 
$u,v \in H^{1}_{\text{loc}}(\overline{\Omega})\cap C^{0}(\overline{\Omega})$ such that 
		\begin{equation*}
			\left\{
			\begin{array}{cll}
				- \Delta u-f(u)\leq  -\Delta v -f(v) & \text{in}&  \mathcal{D}'(\Omega),\\
				|u|,|v|\leq a|x|^{\delta} e^{\gamma |x|} & \text{in} & \Omega,\\
				u \leq v & \text{on} & \partial \Omega,
			\end{array}
			\right.
		\end{equation*} 
for some $a >0$, $\delta \geq 0$ and $ \gamma \in \left[ 0,  \frac{\pi}{4\mathtt{S}_{e_N}(\Omega) \sqrt{e-1}} \right) $. \\	
Then, $ u \leq v $ in $ \Omega$. 	
\end{thm}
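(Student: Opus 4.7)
\textbf{Proof plan for Theorem \ref{thComp3}.} The strategy is to show that $w := (u-v)^+$ vanishes identically on $\Omega$ by combining a weighted energy estimate with a Wirtinger-type Poincar\'e inequality on the vertical sections of $\Omega$. Since $u \leq v$ on $\partial\Omega$, the function $w$ belongs to $H^1_{loc}(\overline\Omega) \cap C^0(\overline\Omega)$, is non-negative, and vanishes on $\partial\Omega$; moreover the monotonicity of $f$ ensures that $(f(u)-f(v))\, w \leq 0$ pointwise (on $\{u>v\}$ we have $f(u)\leq f(v)$, while $w=0$ elsewhere). Testing the distributional inequality $-\Delta(u-v) \leq f(u) - f(v)$ against $w\eta^2$, for a non-negative weight $\eta$ compactly supported in $\overline\Omega$ (so that $w\eta^2\in H^1_0(\Omega)$ is an admissible test function), and using the algebraic identity
\begin{equation*}
|\nabla(w\eta)|^2 \;=\; \eta^2|\nabla w|^2 + 2\eta w\, \nabla w\cdot\nabla\eta + w^2|\nabla\eta|^2,
\end{equation*}
yields after rearrangement the fundamental energy inequality
\begin{equation*}
\int_\Omega |\nabla(w\eta)|^2\,dx \;\leq\; \int_\Omega w^2 |\nabla\eta|^2\,dx.
\end{equation*}

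Next I would invoke the bounded-section assumption: for every $x' \in \R^{N-1}$, the set $S_{x'}^{e_N}$ is a countable union of open intervals whose endpoints lie on $\partial\Omega$ (where $w\eta$ vanishes), each of length at most $S := \mathtt{S}_{e_N}(\Omega)$. Applying Wirtinger's inequality on each such sub-interval and integrating in $x'$ produces the fiberwise Poincar\'e inequality
\begin{equation*}
\int_\Omega (w\eta)^2\,dx \;\leq\; \frac{S^2}{\pi^2}\int_\Omega |\partial_{x_N}(w\eta)|^2\,dx \;\leq\; \frac{S^2}{\pi^2}\int_\Omega |\nabla(w\eta)|^2\,dx,
\end{equation*}
which combined with the energy inequality gives $\int_\Omega (w\eta)^2\,dx \leq (S^2/\pi^2)\int_\Omega w^2|\nabla\eta|^2\,dx$. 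An absorption argument then forces $w\equiv 0$ as soon as $\eta$ can be chosen strictly positive on $\Omega$ with its logarithmic derivative $|\nabla\eta|/\eta$ pointwise smaller than a suitable multiple of $\pi/S$.

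The main technical challenge lies in selecting $\eta$ compatibly with the growth bound $|u|,|v|\leq a|x|^\delta e^{\gamma|x|}$: the weight must decay strictly faster than $e^{-\gamma|x|}$ at infinity (to ensure integrability and to validate the passage to the limit), while simultaneously keeping its logarithmic derivative as small as possible. My plan is to take an approximating family $\eta_R = \chi_R \cdot \Psi$, where $\chi_R$ is a standard smooth cutoff of $B_{2R}$ with $|\nabla\chi_R|\leq C/R$ and $\Psi$ is a smoothed positive exponential weight (for instance a mollification of $\exp(-\gamma'|x|)$ with $\gamma<\gamma'$ to be optimized), and then to pass to the limit $R\to\infty$ using Fatou on the left-hand side and dominated convergence on the right-hand side (the gap $\gamma'>\gamma$ providing an integrable dominant for $w^2 e^{-2\gamma'|x|}$). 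Tracking the \emph{best} constant attainable through this optimization---and in particular making the error terms arising from the cutoff $\chi_R$ vanish cleanly in the limit---is where the specific threshold $\pi/(4\sqrt{e-1})$ in the statement should materialize; the factor $\sqrt{e-1}$ is expected to arise from a Gaussian-type one-dimensional integral estimate invoked in the optimization step. The main obstacle is precisely this final optimization together with the rigorous justification of the cutoff limit in the weak-sense inequality.
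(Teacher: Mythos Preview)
Your approach is genuinely different from the paper's, and in fact your energy inequality is \emph{sharper}. The paper does not use the identity $|\nabla(w\eta)|^2 = \eta^2|\nabla w|^2 + 2\eta w\,\nabla w\cdot\nabla\eta + w^2|\nabla\eta|^2$; instead it tests with $(u-v)^+\Psi^2$ where $\Psi=\Psi(x')$ depends only on the horizontal variables, applies Young's inequality to the cross term (losing a factor of $4$), and then applies the fiberwise Poincar\'e inequality to $(u-v)^+$ alone (possible precisely because $\Psi$ is constant along fibers). This yields
\[
\int_\Omega \bigl((u-v)^+\bigr)^2\Psi^2 \;\leq\; \frac{4\mathtt S_{e_N}(\Omega)^2}{\pi^2}\int_\Omega \bigl((u-v)^+\bigr)^2|\nabla\Psi|^2,
\]
which is your inequality with an extra factor $4$. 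The paper then takes $\Psi$ to be a \emph{piecewise-linear} radial cutoff in $x'$ between radii $a$ and $a+h$, obtaining the recursion $w(a)\leq (1+\alpha h^2)^{-1}w(a+h)$ for $w(r):=\int_{C_{e_N}(r)}\bigl((u-v)^+\bigr)^2$ with $\alpha=\pi^2/(4\mathtt S^2)$. Iterating this via Lemma~3.1 of \cite{bisfapi} and choosing the optimal step $h=\sqrt{(e-1)/\alpha}$ produces the exponential lower bound $w(R)\gtrsim e^{R/h}$, and comparing with the upper bound $w(R)\lesssim R^{N+2\delta-1}e^{2\gamma R}$ forces a contradiction exactly when $2\gamma<1/h$, i.e.\ $\gamma<\pi/(4\mathtt S\sqrt{e-1})$. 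So the factor $\sqrt{e-1}$ comes from the discrete iteration lemma, \emph{not} from any Gaussian-type integral as you speculate.

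Your proposed route---a global exponential weight $\Psi=e^{-\gamma'|x|}$ (or its smoothing) truncated by $\chi_R$---would, if carried through, give the inequality $\int (w\Psi)^2 \leq (\mathtt S\gamma'/\pi)^2\int(w\Psi)^2$ in the limit $R\to\infty$ (the cutoff errors do vanish since $w^2\Psi^2$ is integrable once $\gamma'>\gamma$, using that $\Omega$ lies in a strip). This forces $w\equiv0$ whenever $\gamma<\gamma'<\pi/\mathtt S$, a threshold strictly \emph{larger} than the paper's. In other words, your method should not reproduce the constant $\pi/(4\mathtt S\sqrt{e-1})$; it should \emph{improve} it to $\pi/\mathtt S$, which the harmonic examples $u=(e^{mx}+e^{-mx})\sin(my)$ on $\{0<y<\pi/m\}$ show to be sharp. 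The gaps you identify (admissibility of $w\eta_R^2$ as a test function, rigorous passage to the limit) are real but routine once $\Omega$ is bounded in the $e_N$-direction; the paper handles the analogous issue carefully for its horizontally-supported $\Psi$.
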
 

\smallskip

The smallness assumption on the section of $\Omega$ is necessary for the validity of both Theorem \ref{thComp1} (item $(i)$ and item $(iii)$) and Theorem \ref{thComp2}. Indeed, the functions $u= \sin(y)$ and $ v \equiv 0$ satisfy $- \Delta u -u = 0 = - \Delta v -v$ on  the two-dimensional strip $ \Omega = \{(x,y) \in \R^2 \,  : \, 0 <y < \pi \}$ and $ u \leq v $ on $ \partial \Omega$, but the conclusion of the comparison principles fails.

That the growth assumption on $u$ in Theorem \ref{thComp3} is necessary to ensure the comparison principle is seen from the following classical example involving harmonic functions on finite strips of $\R^2$, namely, when $f\equiv 0$. It is  well-known that, for any integer $ k \geq 1$, the  function $u_k(x,y) = \sum_{m=1}^k (e^{mx} + e^{-mx}) \sin(mx)$ is harmonic on the strip $ \Omega = \{(x,y) \in \R^2 \,  : \, 0 <y < \pi \}$ and vanishes on $\partial \Omega$. Therefore, a restriction on the exponential growth must be prescribed in order to get the desired results. 

\smallskip

As an immediate consequence of our comparison principles, we have the following result about the uniqueness of the homogeneous Dirichlet problem.

\begin{cor}\label{Unic-Dirichlet}
Let $\Omega$ and $f$ be as in the statement of Theorem \ref{thComp1} (resp. Theorem \ref{thComp2} or Theorem \ref{thComp3}). Then, the Dirichlet problem
\begin{equation}\label{Dirichlet-pbl}
\begin{cases}
-\Delta u=f(u) &\text{ in }  \mathcal{D}'(\Omega),\\
\quad u=0\,\, &\text{ on } \partial\Omega,
\end{cases}
\end{equation}
has, at most, one solution $u$ satisfying the regularity and the growth conditions stated in Theorem \ref{thComp1} (resp. Theorem \ref{thComp2} or Theorem \ref{thComp3}). \\
In particular, if $f(0) =0$, the function $u \equiv 0$ is the only solution to the Dirichlet problem \eqref{Dirichlet-pbl}. 
\end{cor}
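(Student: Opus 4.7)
The plan is to read the uniqueness statement directly off from the comparison principles by applying them symmetrically in both directions. Suppose $u_1, u_2$ are two solutions to the Dirichlet problem \eqref{Dirichlet-pbl} that both satisfy the regularity and growth conditions stated in the chosen comparison theorem. Since each $u_i$ solves $-\Delta u_i = f(u_i)$ in $\mathcal{D}'(\Omega)$, the differential inequality $-\Delta u - f(u) \le -\Delta v - f(v)$ required by the comparison principle holds trivially, as an equality, regardless of which of $u_1, u_2$ we label as $u$ and which as $v$. Moreover, both solutions vanish on $\partial\Omega$, so the boundary condition $u \le v$ on $\partial\Omega$ is satisfied in both orderings.

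First I would fix $u = u_1$, $v = u_2$ and apply Theorem \ref{thComp1} (respectively \ref{thComp2} or \ref{thComp3}) with the standing hypotheses already assumed: the smallness of $\mathtt{S}_\nu(\Omega)$, when required, is part of the hypothesis, and the pointwise bounds or exponential envelopes hold with common constants (take $M := \max(\|u_1\|_{\infty}, \|u_2\|_{\infty})$ for Theorem \ref{thComp1}; replace $a$ by the maximum of the corresponding constants for $u_1$ and $u_2$ in Theorem \ref{thComp2}; take the maximum of the Lipschitz constants in Theorem \ref{thComp1}(iii)). The conclusion is $u_1 \le u_2$ in $\Omega$. Swapping the roles of $u_1$ and $u_2$ and repeating the same argument yields the reverse inequality $u_2 \le u_1$ in $\Omega$, and combining the two gives $u_1 \equiv u_2$, which is the desired uniqueness.

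For the last sentence, assume $f(0) = 0$. Then the identically zero function is a smooth classical (and hence distributional) solution of \eqref{Dirichlet-pbl}, and it automatically fulfills every regularity and growth requirement that appears in Theorems \ref{thComp1}, \ref{thComp2}, \ref{thComp3}. The uniqueness established above then forces any other solution (within the prescribed class) to coincide with it, proving that $u \equiv 0$ is the unique solution.

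There is no substantive obstacle here: once the comparison principles are granted, the corollary is essentially a one-line bookkeeping deduction. The only minor point of care is to check that the quantitative constants ($M$ in Theorem \ref{thComp1}, $a$ in Theorem \ref{thComp2}, the Lipschitz bound in Theorem \ref{thComp1}(iii)) can be chosen uniformly for the pair $(u_1, u_2)$, which is immediate since both solutions are assumed to lie in the same a priori class.
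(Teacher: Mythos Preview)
Your proof is correct and matches the paper's approach: the authors present Corollary~\ref{Unic-Dirichlet} simply as ``an immediate consequence of our comparison principles'' without spelling out the details, and your symmetric application of the comparison theorem (plus the observation that $u\equiv 0$ qualifies when $f(0)=0$) is exactly the intended argument.
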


The examples considered after Theorem \ref{thComp3},  prove that the preceding result is no longer true if either the assumptions on the size of $\Omega$ or those on the growth of $u$ are not met. 

\medskip

An immediate consequence of  Corollary \ref{Unic-Dirichlet} is the following general symmetry result for solutions to the homogeneous Dirichlet problem \eqref{Dirichlet-pbl}. 

\begin{cor}\label{Sym-Dirichlet}
Let $\Omega$, $f$ be as in the statement of Corollary \ref{Unic-Dirichlet} and assume that the Dirichlet problem
\begin{equation}\label{Sym-Dir-pbl}
\begin{cases}
-\Delta u=f(u) &\text{ in }  \mathcal{D}'(\Omega),\\
\quad u=0\,\, &\text{ on } \partial\Omega,
\end{cases}
\end{equation}
has a  solution $u$ satisfying the regularity and the growth conditions stated in Corollary \ref{Unic-Dirichlet}.

Let $ \rho$ be an isometry of $\R^N$, $N\geq2$. If $\Omega$ is invariant with respect to $\rho$, i.e., $\Omega$ satisfies 
$ \rho(\Omega) = \Omega$, then the solution $u$ inherits the same symmetry, namely, $ u(x) = u(\rho(x))$ for any $x \in \Omega$.   
\end{cor}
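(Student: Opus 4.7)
The plan is to reduce the claimed symmetry to the uniqueness statement of Corollary \ref{Unic-Dirichlet}: I would produce a second solution of the same Dirichlet problem by composing $u$ with the isometry $\rho$, then conclude that it must coincide with $u$.

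Concretely, define $v : \Omega \to \R$ by $v(x) := u(\rho(x))$. Since $\rho(\Omega) = \Omega$ and, as an isometry, $\rho$ is a homeomorphism of $\R^N$ sending $\partial\Omega$ to itself, the function $v$ is well-defined on $\Omega$ and vanishes on $\partial\Omega$. Writing $\rho(x) = Qx + b$ with $Q \in O(N)$, a standard change of variables in $\mathcal{D}'(\Omega)$ yields $\Delta v = (\Delta u) \circ \rho$, so $-\Delta v = f(u \circ \rho) = f(v)$ distributionally. The regularity $v \in C^{0}(\overline{\Omega}) \cap H^{1}_{\text{loc}}(\overline{\Omega})$ (or $\mathrm{Lip}_{\text{loc}}(\overline{\Omega})$, depending on which version of Corollary \ref{Unic-Dirichlet} is being invoked) is preserved, since $\rho$ is a smooth affine map with orthogonal linear part of unit Jacobian.

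It remains to check that $v$ satisfies the quantitative bound required by the applicable comparison theorem. In the bounded framework of Theorem \ref{thComp1} one has $\|v\|_{\infty} = \|u\|_{\infty} \leq M$, so nothing needs to be verified. In the frameworks of Theorems \ref{thComp2} and \ref{thComp3}, the identity $|\rho(x) - \rho(0)| = |x|$ gives $|\rho(x)| \leq |x| + |b|$, hence
\[
|v(x)| \,\leq\, a\bigl(|x|+|b|\bigr)^{\delta} e^{\gamma(|x|+|b|)} \,\leq\, a'\, |x|^{\delta} e^{\gamma|x|}
\]
for a modified constant $a' = a'(a,b,\delta,\gamma) > 0$ and the same exponent $\gamma$; in particular the smallness constraint on $\gamma$ appearing in Theorem \ref{thComp3} is preserved.

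With $u$ and $v$ both solving \eqref{Sym-Dir-pbl} in the same regularity/growth class, Corollary \ref{Unic-Dirichlet} forces $u \equiv v$, which is exactly the desired symmetry $u(x) = u(\rho(x))$. The only point meriting care is the preservation of the exponent $\gamma$ under composition with an isometry that does not fix the origin, and this is handled by the elementary constant adjustment above; no genuine obstacle arises.
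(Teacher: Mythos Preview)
Your argument is precisely the one intended by the paper, which presents the corollary as an immediate consequence of Corollary \ref{Unic-Dirichlet} and gives no further details. The only point to verify is that $v = u\circ\rho$ stays in the same regularity/growth class, and you have done this --- with the minor caveat that the displayed inequality $a(|x|+|b|)^{\delta}e^{\gamma(|x|+|b|)} \le a'|x|^{\delta}e^{\gamma|x|}$ is not literally valid near $x=0$ when $\delta>0$ and $b\neq 0$; this is harmless since the growth bound in the proofs of Theorems \ref{thComp2}--\ref{thComp3} enters only in the asymptotic estimate of $w(R)$ for large $R$, so one may either absorb the polynomial factor into a slightly larger exponent $\gamma+\epsilon$ (the smallness constraints being strict) or simply adjust by an additive constant on the bounded region.
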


Let us illustrate the above result on the \textit{torsion problem} for an infinite solid bar. Let us start with the case of an infinite solid straight bar with spherical cross section. That is, the study of the solutions to the problem 
\begin{equation}\label{Sym-Dir-bar}
\begin{cases}
-\Delta u= K &\text{ in }  \mathcal{D}'(\Omega),\\
\quad u=0\,\, &\text{ on } \partial\Omega,
\end{cases}
\end{equation}
where $ \Omega = \Omega_{K,R} := \R^{N-K} \times B^K_R$, where $1 \leq K< N$ is an integer,  and 
$B^K_R$ denotes the open ball of $\R^K$ centered at the origin and of radius $R>0$. 

Since $\Omega_{K,R}$ is invariant with respect to any translation in the variables $x_1,\ldots, x_{N-K}$ and also to any rotation in the variables $x_{N-K+1},\ldots, x_{N}$, Corollary \ref{Sym-Dirichlet}\footnote{ \, applied with the non-increasing function $f \equiv K$.} tell us that the unique solution $u$ of   \eqref{Sym-Dir-bar}, with subexponential growth at infinity, must be independent of $x_1,\ldots, x_{N-K}$ and radially symmetric in the variables $x_{N-K+1},\ldots, x_{N}$. \\
It is therefore easy to check that the function 
$$u(x) =\frac{R^2 - (x_{N-K+1}^2 + \ldots + x_{N}^2)}{2},  \quad \qquad x \in \Omega_{K,R} $$
is the only solution to the problem \eqref{Sym-Dir-bar} with subexponential growth at infinity. 

The previous analysis extends to general infinite solid bars (not necessarily  straight). For instance, it applies to the following examples : 

1) $ \Omega = \R^{N-K} \times \omega$, where $1 \leq K< N$ is an integer  and $\omega$ is an open bounded subset of $\R^K$. In this case, the unique solution $u$ of \eqref{Sym-Dir-bar} with subexponential growth at infinity, depends only on the variables $x_{N-K+1},\ldots, x_{N}$. 

2) $ \Omega = \{(x_1,x_2) \in \R^2 \,  : \, -\varphi (x_1) <x_2 < \varphi(x_1) \}$, where $\varphi : \R \mapsto (0, +\infty)$ is a bounded continuous function.  In this case the solution $u$ must be symmetric with respect to the line $ x_2 =0$. 

3) $\Omega $ is a domain of revolution of $\R^N$,$ N \geq 3$, i.e.,  $ \Omega = \left\lbrace x \in \R^N \, : \,  \sqrt{x_{2}^2 + \ldots + x_{N}^2} < \varphi (x_1) \right\rbrace $, where $\varphi : \R \mapsto (0, +\infty)$ is a bounded continuous function. 
Here, $u$ must have the form $u = u(x_1, \sqrt{x_{2}^2 + \ldots + x_{N}^2})$.

Also observe that, if $ \varphi $ is $T$-periodic, with $ T>0$, then the solution $u$ is $T$-periodic in the $x_1$ variable.  

\bigskip

It is clear that the above discussions and results also hold true for general non linear function $f$ satisfying the assumptions of Corollary \ref{Sym-Dirichlet}. 

\bigskip

Now we are ready to prove our comparison principles.

\bigskip

\textit{Proof of Theorem \ref{thComp1}.}  Since our problem is invariant under rotation, we may and do suppose that 
$\nu=e_{N},$ the last vector of the canonical base of $\R^N$.
By assumption, for every $\phi \in \cic(\Omega)$, $ \phi \geq 0$, we have 
		\begin{equation}\label{eq4}
			\disp\int_{\Omega} \nabla (u-v) \nabla \phi \leqslant \disp\int_{\Omega} (f(u)-f(v)) \phi .
		\end{equation} 
Let us first consider the cases $(i)$ and $(ii)$, where we suppose that $\Omega$ has good section in the direction $e_N$.

Observe that, for any $\Psi \in C^{0,1}_{c}(\R^{N-1})$ and for any open ball $B' \subset \R^{N-1}$ with $\text{supp}(\Psi)\subset B'$, we have $g:= (u-v)^{+} \Psi^{2} \in C^{0}(\overline{\Omega})$. Let us prove that $g$ also belongs to $H^{1}_{0}(\Omega\cap (B'\times \R))$.  \\	
The assumption \eqref{cylindre} ensures that $\Omega \cap (B' \times \R)$ is a bounded open subset of $\R^N$ thus, from the assumption  $u,v \in H^{1}_{\text{loc}}(\overline{\Omega})\cap C^{0}(\overline{\Omega})$, it follows that $(u-v)^{+}$ is a bounded, continuous function on the set $ \overline{\Omega \cap (B' \times \R)}$ such that $(u-v)^{+} \in H^{1}(\Omega \cap (B' \times \R))$. Also, since $\Psi^2 \in C^{0,1}_{c}(\R^{N-1})$, we have $g \in H^{1}(\Omega \cap (B' \times \R))$ and 
		\begin{equation}\label{deriv-g}
			\begin{array}{lll}
			\nabla g =\nabla ((u-v)^{+})\Psi^{2}+2\Psi (u-v)^{+} \nabla \Psi & \text{in} & \mathcal{D}'(\Omega \cap (B' \times \R)). 
			\end{array}
		\end{equation}
Also note that 
\begin{itemize}
			\item if $x \in \dr \Omega \cap (B'\times \R),$ then $g(x)=0, $ since $u(x) \leqslant v(x)$ on $ \dr \Omega $,		
			\item if $ x=(x',x_N) \in \overline{\Omega} \cap \dr (B'\times \R),$ then $\Psi(x')=0$, and so $g(x)=0$,
		\end{itemize}
hence $ g=0 $ on $\dr (\Omega\cap (B' \times \R)) = (\dr \Omega \cap (B'\times \R)) \cup (\overline{\Omega} \cap \dr (B'\times \R)).$ Then $g \in H^{1}_{0}(\Omega\cap (B' \times \R)),$ $ g \geq 0,$ and so we can find a sequence of functions 
$\phi_{n}\in \cic(\Omega \cap (B' \times \R))$, $\phi_{n} \geq 0,$ such that 
		\begin{equation*}
			\lim\limits_{n \to +\infty} \| \phi_{n}-g \|_{H^{1}(\Omega\cap (B' \times \R))}=0.
		\end{equation*} 
By \eqref{eq4}, for any $n \geq 1$ we have 
		\begin{equation*}
\disp\int_{\Omega \cap (B' \times \R)} \nabla (u-v) \nabla \phi_{n} = \int_{\Omega } \nabla (u-v) \nabla \phi_{n} \leq \disp\int_{\Omega} (f(u)-f(v)) \phi_{n} =\disp\int_{\Omega\cap (B' \times \R)} (f(u)-f(v)) \phi_{n}
		\end{equation*}     
and passing to the limit, we obtain 
		\begin{equation}\label{eq5}
			\disp\int_{\Omega} \nabla (u-v) \nabla g \leq \disp\int_{\Omega} (f(u)-f(v)) g, 
		\end{equation}
since the support of $g$ is contained in  $\Omega\cap (B' \times \R)$. 
Plugging  \eqref{deriv-g} into \eqref{eq5}  yields 
	\begin{equation}\label{eq5-nuova}
	\disp\int_{\Omega} \nabla (u-v) (\nabla (u-v)^{+} \Psi^{2} +(u-v)^{+} 2\Psi \nabla \Psi) \leq \disp\int_{\Omega} 
	(f(u)-f(v))(u-v)^{+} \Psi ^{2} 
	\end{equation}		
and so 	
	\begin{equation}\label{eq5bis}
\disp\int_{\Omega} |\nabla (u-v)^{+}|^{2}  \Psi^{2} +\disp\int_{\Omega} (u-v)^{+} 2\Psi \nabla (u-v)^{+} \nabla \Psi  \leq L_{f,M}\disp\int_{\Omega} ((u-v)^{+})^{2} \Psi ^{2}, 
	\end{equation}
where $L_{f,M}$ is any positive number greater or equal to the Lipschitz constant of $f$ on the compact set $[-M,M]$ if we are in the case $(i),$ while $L_{f,M}=0$ if the case $(ii)$ is in force. Hence
	\begin{align*}			
			& \disp\int_{\Omega} |\nabla (u-v)^{+}|^{2}  \Psi^{2} \leq -2\disp\int_{\Omega} (u-v)^{+} \Psi \nabla (u-v)^{+} \nabla \Psi + L_{f,M}\disp\int_{\Omega} ((u-v)^{+})^{2}  \Psi^{2} \\  
			& \disp\int_{\Omega} |\nabla (u-v)^{+}|^{2}  \Psi^{2} \leq -2\disp\int_{\Omega} (\sqrt{2}(u-v)^{+} \nabla \Psi ) 
			\left( \dfrac{\Psi\nabla (u-v)^{+}}{\sqrt{2}}\right)   + L_{f,M}\disp\int_{\Omega} ((u-v)^{+})^{2} \Psi^{2} \\  
			& \disp\int_{\Omega} |\nabla (u-v)^{+}|^{2}  \Psi^{2} \leq 2\disp\int_{\Omega} (\sqrt{2}(u-v)^{+} |\nabla \Psi| ) 
			\left( \dfrac{|\Psi||\nabla (u-v)^{+}|}{\sqrt{2}}\right)   + L_{f,M}\disp\int_{\Omega} ((u-v)^{+})^{2} \Psi^{2}
		\intertext{and by Young's inequality}  
			& \disp\int_{\Omega} |\nabla (u-v)^{+}|^{2}  \Psi^{2} \leq \disp\int_{\Omega} 2((u-v)^{+})^{2} |\nabla \Psi|^{2} +
			\disp\int_{\Omega} \dfrac{\Psi^{2}|\nabla (u-v)^{+}|^{2}}{2} + L_{f,M} \disp\int_{\Omega} ((u-v)^{+})^{2} \Psi^{2}. 
\end{align*}
Therefore we have
		\begin{equation}\label{eq6}
		\disp\int_{\Omega} |\nabla (u-v)^{+}|^{2}  \Psi^{2} \leq 4\disp\int_{\Omega} ((u-v)^{+})^{2} |\nabla \Psi|^{2} + 2L_{f,M}\disp\int_{\Omega} ((u-v)^{+})^{2} \Psi^{2}. 
		\end{equation}
		Since
		\begin{equation*}
			\begin{array}{lll}
				|\nabla (u-v)^{+}|^{2} \geq |\partial_{N} (u-v)^{+}|^{2}& \text{a.e. on } & \Omega,
			\end{array}	
		\end{equation*}
		we get
		\begin{align*}
			\disp\int_{\Omega} |\nabla (u-v)^{+}|^{2}  \Psi^{2}  &\geq \disp\int_{\Omega} |\partial_{N} (u-v)^{+}|^{2}\Psi^{2}=
			\disp\int_{B'} \disp\int_{S^{e_{N}}_{x'}}|\partial_{N} (u-v)^{+}(x',x_N)|^{2}  \Psi^{2}(x') dx_N dx'=\\
			&= \disp \int_{B'} \Psi^{2}(x') \Big(\disp\int_{S^{e_{N}}_{x'}}|\partial_{N} (u-v)^{+}(x',x_N)|^{2} dx_N \Big)dx'. 
		\end{align*}
Now we observe that, for every $x' \in B'$, the one-dimensional set $ S^{e_{N}}_{x'}$ can be written as the disjoint union of an at most countable family of open bounded intervals (its connected components), i.e., $ S^{e_{N}}_{x'}=\sqcup_{j \in J(x')}I_{x',j}$, with $J(x') \subset \mathbb{N}$ and $I_{x',j}$ an open bounded interval. Therefore, by Poincaré's inequality, we have 
		\begin{align*}
			\disp\int_{S^{e_{N}}_{x'}}|\partial_{N} (u-v)^{+}(x',x_N)|^{2} d x_N
			& =\disp\sum_{j \in J(x')} \disp\int_{I_{x',j}}|\partial_{N} (u-v)^{+}(x',x_N)|^{2} d x_N \geq \\
			&\geq \disp\sum_{j \in J(x')} \dfrac{\pi^{2}}{(\mathcal{L}^{1}(I_{x',j}))^{2}} \disp\int_{I_{x',j}}((u-v)^{+}(x',x_N))^{2}
			d x_N \geq \\
&\geq \dfrac{\pi^{2}}{(\mathtt{S}_{e_{N}}(\Omega))^{2}}\disp\sum_{j \in J(x')} \disp\int_{I_{x',j}}((u-v)^{+}(x',x_N))^{2}d x_N=\\
			&= \dfrac{\pi^{2}}{(\mathtt{S}_{e_{N}}(\Omega))^{2}}\disp\int_{S^{e_{N}}_{x'}}((u-v)^{+}(x',x_N))^{2}d x_N
		\end{align*} 
and so
		\begin{equation}
		\disp\int_{\Omega} |\nabla (u-v)^{+}|^{2}  \Psi^{2} \geq \dfrac{\pi^{2}}{(\mathtt{S}_{e_{N}}(\Omega))^{2}} \disp\int_{\Omega} 
		((u-v)^{+})^{2} \Psi^{2}. \label{eq7}
		\end{equation} 
Now, thanks to \eqref{eq6} and \eqref{eq7}, we deduce that
		\begin{equation}\label{eq8}
			\Big(\frac{\pi^{2}}{(\mathtt{S}_{e_{N}}(\Omega))^{2}}-2L_{f,M} \Big) \disp\int_{\Omega} ((u-v)^{+})^{2} \Psi^{2} \leq  4\disp\int_{\Omega} ((u-v)^{+})^{2} |\nabla \Psi|^{2} . 
		\end{equation}
If the case $(i)$ is in force, we set $\varepsilon(f,M) = \frac{\pi}{\sqrt{2L_{f,M}}}$.  Then, if $\mathtt{S}_{e_{N}}(\Omega)<\varepsilon(f,M)$ we have 
       \begin{equation}\label{eq9}
\disp\int_{\Omega} ((u-v)^{+})^{2} \Psi^{2} \leq C_{1}(f,M,\Omega) \disp\int_{\Omega} ((u-v)^{+})^{2} |\nabla \Psi|^{2}, 
       \end{equation}
with $C_{1}(f,M,\Omega)=\dfrac{4}{\frac{\pi^{2}}{(\mathtt{S}_{e_{N}}(\Omega))^{2}}-2L_{f,M}}>0$.\\ 
When the case $(ii)$ is in force, equation \eqref{eq8} yields 
		\begin{equation}\label{eq10}
		\disp\int_{\Omega} ((u-v)^{+})^{2} \Psi^{2} \leq C_{2}(\Omega) \disp\int_{\Omega} ((u-v)^{+})^{2} |\nabla \Psi|^{2} 
		\end{equation}
where $C_2 (\Omega) = \frac{4\mathtt{S}^{2}_{e_N}(\Omega)}{\pi^2}$ (and without any restriction on the size of 
$\mathtt{S}_{e_N}(\Omega)$).\\
		
For $0< a<b$, set $h=b-a$ and consider the Lipschitz-continuous function 
		\begin{align*}
			\eta_{h} : & \, \R^{+} \to \R\\
			& \, \, t \mapsto \left\{
			\begin{array}{cll}
				1 & \text{if} & t \in [0,a],\\
				\frac{b-t}{h} & \text{if} & t \in [a,b],\\
				0 & \text{if} & t \in [b,+\infty).
			\end{array}
			\right.
		\end{align*} 
For any $x' \in \R^{N-1},$ set $\Psi_{h}(x'):=\eta_{h}(\vert x' \vert)$, then $\Psi_{h} \in C^{0,1}_{c}(\R^{N-1})$ and 
$| \nabla' \Psi_{h}(x') |\leq  \frac{1}{h}$ for almost every $x' \in  \R^{N-1}$.\\
For $r>0,$ recall that $C_{e_{N}}(r)= \Omega \cap (B'(0',r) \times \R)$ and consider the function defined by 
		\begin{equation}\label{def-w(r)}
w(r):=\disp\int_{C_{e_{N}}(r)} ((u-v)^{+})^{2}.		
		\end{equation}

If the case $(i)$ is in force, from \eqref{eq9} we  have 
		\begin{align*}
		w(a)=\disp\int_{C_{e_{N}}(a)} ((u-v)^{+}(x))^{2} &\leq \disp\int_{\Omega}((u-v)^{+}(x))^{2}\Psi^{2}_{h}(x')\\ &\leq C_{1}(f,M,\Omega) \disp\int_{\Omega} ((u-v)^{+}(x))^{2}|\nabla \Psi_{h}(x')|^{2}\\
			& \leq  \frac{C_{1}(f,M,\Omega)}{h^{2}}\disp\int_{C_{e_{N}}(b)\backslash C_{e_{N}}(a)} ((u-v)^{+}(x))^{2}	
		\end{align*} 
		
and by adding $ \frac{C_{1}(f,M,\Omega)}{h^{2}}w(a)$ to both sides of the latter, we have

		\begin{equation*}
			w(a) \leq \frac{1}{\frac{h^{2}}{C_{1}(f,M,\Omega)}+1} w(a+h), \qquad \forall \, a, h >0. 
		\end{equation*}
To conclude it is enough to prove that $ w \equiv 0$. If not, we can find $A>0$ such that 
		\begin{equation}\label{eq11}
			w(A)>0,
		\end{equation}
and so, we can apply Lemma $3.1$ in \cite{bisfapi} with 
$\alpha=\frac{1}{C_{1}(f,M,\Omega)}>0$,  $ \delta =0, \gamma =2$ and $\beta=1$ to infer that, for any $h>0$, the function $w$ satisfies 
\begin{equation}\label{eq12}
			w(R)\geq w(A)(\alpha h^{2} +1)^{\frac{R-A}{h}-1}, \qquad \forall \, R \geq A+h .
\end{equation}
		
If we take $h=\sqrt{\frac{e-1}{\alpha}}>0,$ then 
		\begin{equation}\label{eq12bis}
		w(R)\geq w(A)(\alpha h^{2} +1)^{\frac{R-A}{h}-1}=w(A)e^{-(\frac{A}{h}+1)}e^{\frac{R}{h}}, \qquad \forall \, R >> 1 .
		\end{equation}
On the other hand, the boundedness of $u$ and $v$ leads to 
\begin{equation}\label{eq13}
w(R) \leq 4M^{2} \mathcal{L}^{N}(C_{e_{N}}(R)) \leq 4M^{2}\mathtt{S}_{e_{N}}(\Omega)\omega_{N-1}R^{N-1}, \qquad \forall \, R >0, 
\end{equation}
where $\mathcal{L}^{N}$ denotes the $N$-dimensional Lebesgue measure and $\omega_{N-1}$ is the Lebesgue measure of unit ball of $\R^{N-1}$.
Inequality \eqref{eq13} is in contradiction with \eqref{eq12bis}, so $ w \equiv 0$ and the desired conclusion follows. 

The case $(ii)$ is proven in exactly the same way. Simply replace the constant $C_{1}(f,M,\Omega)$ by the 
constant $C_{2}(\Omega)$ in the previous argument.  Indeed, since $f$ is non-increasing, inequality \eqref{eq5-nuova}	becomes 
\begin{equation*}
	\disp\int_{\Omega} \nabla (u-v) (\nabla (u-v)^{+} \Psi^{2} +(u-v)^{+} 2\Psi \nabla \Psi) \leq \disp\int_{\Omega} 
	(f(u)-f(v))(u-v)^{+} \Psi ^{2} \leq 0
\end{equation*}	
and so 	\eqref{eq5bis} holds true with $ L_{f,M} =0$. 
		
The proof in the case $(iii)$ is similar to the one of the case $(i)$. However, we must pay attention to the fact that in this case we do not assume that $\Omega$ is locally bounded in the direction $e_N,$ and therefore we have to use a different path to prove that $g \in H^{1}_{0}(\Omega\cap (B'\times \R))$. \\
To this end, we set $\omega = \Omega\cap (B'\times \R)$ and we observe that $\mathcal{L}^{N}(\omega)<+\infty.$ Indeed, 
\begin{equation*}
\mathcal{L}^{N}(\omega)=\disp\int_{\omega}dx=\disp\int_{B'}\Big(\disp\int_{S_{x'}^{e_{N}}}d x_N \Big)dx' \leq  
\mathtt{S}_{e_{N}}(\Omega) \mathcal{L}^{N-1}(B')<+\infty,
\end{equation*}
where $\mathcal{L}^{N-1}$ denotes the ($N-1$)-dimensional Lebesgue-measure of the open ball $B' \subset \R^{N-1}$.

We already know that  $g \in C^{0}(\overline{\omega})$ and $g=0$ on $\dr \omega$. Moreover $g \in L^2(\omega)$ since 
\begin{equation*}
\disp\int_{\omega}g^{2}(x)dx=\disp\int_{\omega}((u-v)^{+}(x))^{2} \Psi^{4}(x')dx\leq 4M^{2}\|\Psi\|_{L^{\infty}(\R^{N-1})}^{4}
\mathcal{L}^{N}(\omega)<+\infty.
\end{equation*}
Also, $g$ is locally Lipschitz-continuous in $\Omega$, since $u,v \in Lip_{\text{loc}}(\overline{\Omega})$ by assumption. 
Hence, formula \eqref {deriv-g} holds true and so $g \in H^{1}(\Omega\cap (B'\times \R))$, since 
\begin{align*}
\disp\int_{\omega}(\dr_{j}((u-v)^{+})^{2} \Psi^{4} & \leq 4M^{2}\|\Psi\|_{L^{\infty}(\R^{N-1})}^{4}\mathcal{L}^{N}(\omega),
\end{align*}
		\begin{equation*}
			\disp\int_{\omega}\Psi^{2}((u-v)^{+})^{2}(\dr_{j}\Psi)^{2} \leq 4 M^{2} \|\Psi\|_{L^{\infty}(\R^{N-1})}^{2} 
			\| \partial_j \Psi\|_{L^{\infty}(\R^{N-1})}^{2}\mathcal{L}^{N}(\omega).
		\end{equation*}
Then, $g \in H^{1}_{0}(\omega)$ and so, we can proceed  as in the proof of the case $(i)$ to obtain the desired conclusion. \qed

\medskip

\textit{Proof of Theorem \ref{thComp2}.} We proceed as in the proof of the case $(i)$ of Theorem \ref{thComp1} until 
\eqref{eq5-nuova}. (This is possible, since in this case \eqref{eq5} holds even under the assumption $f= f_1 + f_2$, where $f_2$ is a non-increasing function, possibly discontinuous. Indeed, in this case $f(u)$ and $f(v)$ are bounded measurable functions on the bounded set $\Omega \cap (B' \times \R)$). Then we obtain \eqref{eq5bis}, where now $L_{f,M}$ is any positive number greater or equal to $L_{f_1}$, the Lipschitz constant of $f_1$.  Here we have used that $(f(u)-f(v))(u-v)^{+} \leq (f_1(u)-f_1(v))(u-v)^{+}$, since $f_2$ is non-increasing. After that, the same proof leads to inequality \eqref{eq8}. 

To complete the proof we set $\varepsilon(L_{f_1},\gamma) = \dfrac{\pi}{\sqrt{16(e-1)\gamma^{2}+2 L_{f,M}}}$ and we observe that \eqref{eq9} is still satisfied if 
$ \mathtt{S}_{e_{N}}(\Omega)<\varepsilon(L_{f_1}, \gamma)$. Therefore, we can consider once again the function $w$ defined by \eqref{def-w(r)} and then follow the proof until \eqref{eq12bis}, i.e., 
\begin{equation}\label{eq12tris}
		w(R) \geq w(A)e^{-(\frac{A}{h}+1)}e^{\frac{R}{h}}, \qquad \forall \, R >> 1, 
		\end{equation}
where $h=\sqrt{\frac{e-1}{\alpha}}$, $A>0$ and $w(A)>0$.
On the other hand, for $R >1$, we also have  
	\begin{align}\label{eq12tris-contrad}
	w(R)& =\disp\int_{C_{e_{N}}(R)}((u-v)^{+})^{2} dx  \leq 4a^{2} \disp\int_{C_{e_{N}}(R)} \vert x \vert^{2 \delta} e^{2 \gamma \vert x \vert} dx  \\
	&\leq 4a^{2} \disp\int_{C_{e_{N}}(R)} (R+\vert x_N \vert)^{2 \delta} e^{2 \gamma (R+\vert x_N \vert)} dx \\
	&\leq 4a^{2} (2R)^{2\delta} e^{2\gamma R} \disp\int_{B'(0',R)} \left(\disp\int_{S^{e_{N}}_{x'}}e^{2\gamma \vert x_N \vert} dx_N\right)dx'  + \\
	&4a^{2} 2^{2\delta} e^{2\gamma R} \disp\int_{B'(0',R)} \left(\disp\int_{S^{e_{N}}_{x'}}
	\vert x_N \vert^{2 \delta} e^{2\gamma \vert x_N \vert} dx_N\right)dx' \\
	&\leq 4a^{2} (2R)^{2\delta} e^{2\gamma R} \disp\int_{B'(0',R)} \left(\disp\int_{S^{e_{N}}_{x'}}
	 (1 + \vert x_N \vert^{2 \delta})e^{2\gamma \vert x_N \vert} dx_N\right)dx'
	\end{align}
Now we obeserve that 	
\begin{align}\label{calcolo-int}
& \disp\int_{S^{e_{N}}_{x'}} (1 + \vert x_N \vert^{2 \delta})e^{2\gamma \vert x_N \vert} dx_N = \\
& \disp\int_{S^{e_{N}}_{x'} \cap \left\lbrace \vert x_N \vert \leq 1\right\rbrace } (1 + \vert x_N \vert^{2 \delta})e^{2\gamma \vert x_N \vert} dx_N  + \disp\int_{S^{e_{N}}_{x'} \cap \left\lbrace \vert x_N \vert >1\right\rbrace } (1 + \vert x_N \vert^{2 \delta})e^{2\gamma \vert x_N \vert} dx_N \\
	&\leq 2 e^{2\gamma} \mathtt{S}_{e_{N}}(\Omega) +  \disp\int_{S^{e_{N}}_{x'}} 2 \vert x_N \vert^{2 \delta} e^{2\gamma \vert x_N \vert} dx_N \leq 2 e^{2\gamma} \mathtt{S}_{e_N}(\Omega) +  2 C_3 := C_4 < + \infty,  
	\end{align}
where $C_3 := \sup_{x' \in\R^{N-1}} \Big(\disp\int_{S^{e_N}_{x'}} \vert x_N \vert^{2 \delta}  e^{2 \gamma |x_N|}d x_N\Big) < + \infty $, by \eqref{eq3}.

From the previous inequalities we infer that, for $R>1$,  
\begin{align}\label{calcolo-int-finale}
w(R) \leq 4a^{2} (2R)^{2\delta} e^{2\gamma R} C_4\mathcal{L}^{N-1}(B'(0',R)) = 4a^{2} 2^{2\delta} C_4 \mathcal{L}^{N-1}(B'(0',1)) R^{2\delta + N-1} e^{2\gamma R} 
\end{align}
The latter inequality contradicts \eqref{eq12tris}, since 
$$
2 \gamma < {\frac{1}{h}} \Longleftrightarrow \mathtt{S}_{e_{N}}(\Omega)< \dfrac{\pi}{\sqrt{16(e-1)\gamma^{2}+
2 L_{f,M}}},
$$
therefore $w \equiv 0$ and so $ u \leq v $ on $\Omega$. \qed

\bigskip

\textit{Proof of Theorem \ref{thComp3}.} We proceed as in the proof of the case $(ii)$ of Theorem \ref{thComp2} until 
\eqref{eq10}. (This is possible, since \eqref{eq5} holds even when $f$ is a non-increasing function, possibly discontinuous. Indeed, in this case $f(u)$ and $f(v)$ are bounded measurable functions on the bounded set $\Omega \cap (B' \times \R)$). Then we consider the function $w$ defined by \eqref{def-w(r)} and then follow the proof until \eqref{eq12bis}, i.e., 
\begin{equation}\label{eq12quater}
		w(R) \geq w(A)e^{-(\frac{A}{h}+1)}e^{\frac{R}{h}}, \qquad \forall \, R >> 1, 
		\end{equation}
where $h=\sqrt{\frac{e-1}{\alpha}}$, $A>0$ and $w(A)>0$.
On the other hand, we also have  
\begin{align}\label{eq12quater-contrad}
	w(R)& =\disp\int_{C_{e_{N}}(R)}((u-v)^{+})^{2} dx  \leq 4a^{2} \disp\int_{C_{e_{N}}(R)} |x|^{2\delta} e^{2 \gamma \vert x \vert} dx \\
	& \leq 4a^{2} \disp\int_{C_{e_{N}}(R)} (R+|x_{N}|)^{2\delta} e^{2 \gamma (R+\vert x_N \vert)} dx \\
	&\leq 4a^{2} e^{2\gamma R} \disp\int_{B'(0',R)} \left(\disp\int_{S^{e_{N}}_{x'}} (R+|x_{N}|)^{2\delta} e^{2\gamma \vert x_N \vert} dx_N\right)dx' \\
	& \leq 4a^{2} \mathcal{L}^{N-1}(B'(0',1)) \mathtt{S}_{e_{N}}(\Omega)(1+\beta)^{2\delta} e^{2\gamma \beta}  R^{N+2\delta-1} e^{2\gamma R}, \qquad \forall \, R >1, 
	\end{align}
where in the latter we have used that $\Omega \subseteq \{x=(x',x_N) \in \R^N  \,  : \, -\beta <x_N < \beta \}$, with 
$\beta >0$, since $ \Omega$ is bounded in the direction $e_N$.

The latter inequality contradicts \eqref{eq12quater}, since $ \gamma \in \left[ 0,  \frac{\pi}{4\mathtt{S}_{e_N}(\Omega) \sqrt{e-1}} \right)$ implies $2 \gamma < {\frac{1}{h}}$. \qed

\section{Some uniform estimates in unbounded domains} \label{SectB}
In order to prove some of our results  we need to  establish some uniform estimates for solutions to semilinear problems on epigraphs satisfying a uniform exterior cone condition.  Let us recall that an open set $\omega \subset \R^N$ (not necessarily an epigraph) satisfies a \textit{uniform exterior cone condition} if for any $x_{0} \in \partial \omega$ there exists a finite right circular cone $V_{x_{0}},$ with vertex $x_{0},$ such that 
$\overline{\omega} \cap V_{x_{0}}=\{x_0 \}$ and the cones $V_{x_{0}}$ are all congruent to some fixed cone $V.$
The cone $V$ is called the \textit{reference cone}.\\
Any globally Lipschitz-continuous epigraph satisfies a uniform exterior cone condition, but the converse is not true. 
Indeed, the epigraph defined by the function $ x \mapsto e^x$ is bounded from below, it satisfies a uniform exterior cone condition (by convexity) without being uniformly continuous.

In the following we consider a (merely) continuous function $h\,:\, \mathbb{R}^{N-1}\rightarrow \mathbb{R}$, its epigraph 
\[
\omega\,:=\, \left\lbrace x=(x',x_N) \in\mathbb{R}^{N-1}\times\mathbb{R} \,:\, x_N> h(x') \right\rbrace 
\]
and, for any $R>0$ and any $ \tau >\displaystyle\sup_{B'(0',R)} \, \vert h \vert$, we set 
		\begin{equation}\label{epi-cilindro}
			\mathfrak{C}^{h}(0',R,\tau)= \left\lbrace x=(x',x_N)\in \R^N, \, x' \in B'(0',R) \, \text{ and } \,\, h(x')< x_N<\tau 
			\right\rbrace , 
		\end{equation}
the intersection of the epigraph $\omega$ with the truncated cylinder $B'(0',R) \times \left( -\tau,\tau \right)$ and 
        \begin{equation*}
		\widehat{\mathfrak{C}^{h}}(0',R,\tau) = \mathfrak{C}^{h}(0',R,\tau) \cup 
         \left\lbrace x=(x',x_{N}) \in B'(0',R) \times \R, \, x_{N}=h(x') \right\rbrace \, .
		\end{equation*}

\medskip

Then, the next uniform estime holds true.

\medskip

\begin{prop}\label{prop2.2}
Let $\omega$ be the epigraph of a continuous function $h : \mathbb{R}^{N-1}\rightarrow \mathbb{R}$ and suppose that 
$\omega$ satisfies a uniform exterior cone condition (with reference cone $V$).\\
Let $\widetilde{R}>0$, $ \widetilde{\tau} > \max \left\lbrace \widetilde{R}, \, \displaystyle 4\sup_{\overline{B'(0',\widetilde{R})}} \vert h \vert\right\rbrace $ and 
$u \in H^{1}(\mathfrak{C}^{h}(0',\widetilde{R},\widetilde{\tau})) \cap C^0(\overline{\mathfrak{C}^{h}(0',\widetilde{R},\widetilde{\tau})})$ satisfy 
\begin{equation}\label{equationestimé1}
	\left\{
	\begin{array}{ccc}
		-\Delta u=f(u)  & \text{in} & \mathcal{D}'(\mathfrak{C}^{h}(0',\widetilde{R},\widetilde{\tau})),\\
		u=0 & \text{on} & \dr \omega \cap \widehat{\mathfrak{C}^{h}}(0',\widetilde{R},\widetilde{\tau}),
	\end{array}
	\right.
\end{equation}     
where $ f \in {Lip}_{loc}(\R)$. Then, there exists $\alpha=\alpha(N,V) \in (0,1)$ such that $u \in C^{0,\alpha} (\overline{\mathfrak{C}^{h}(0',r,t)})$ for any $0<r<\frac{\widetilde{R}}{2}$ and any $\disp\sup_{B'(0',r)}|h|<t<\frac{3}{4}\widetilde{\tau}$
and 
\begin{equation}\label{estimeéelliptique1}
     \|u\|_{C^{0,\alpha}(\overline{\mathfrak{C}^{h}(0',r,t)})} \leq  C \left( L_f + \vert f(0) \vert +1  \right)  
     \left( \|u\|_{L^\infty      (\mathfrak{C}^{h}(0',\widetilde{R},\widetilde{\tau}))} +1 \right),
\end{equation}
where $C=C(r,\widetilde{R},\widetilde{\tau},V,N) >0 $ and $L_f$ is the Lipschitz constant of $f$ on the interval 
$ \left[ - \|u\|_{L^\infty (\mathfrak{C}^{h}(0',\widetilde{R},\widetilde{\tau}))}, \|u\|_{L^\infty (\mathfrak{C}^{h}(0',\widetilde{R},\widetilde{\tau}))} \right] $.
\end{prop}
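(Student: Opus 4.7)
The plan is to reduce Proposition~\ref{prop2.2} to standard elliptic regularity---one interior estimate and one boundary estimate---and then to patch the two together by a finite covering of $\overline{\mathfrak{C}^h(0',r,t)}$. First, set $M := \|u\|_{L^\infty(\mathfrak{C}^h(0',\widetilde{R},\widetilde{\tau}))}$ and $F(x) := f(u(x))$. Since $f$ is locally Lipschitz on $[-M,M]$,
\[
|F(x)| \le L_f\,|u(x)| + |f(0)| \le L_f M + |f(0)| \le (L_f + |f(0)| + 1)(M+1),
\]
so $u$ solves the Poisson equation $-\Delta u = F$ in $\mathcal{D}'(\mathfrak{C}^h(0',\widetilde{R},\widetilde{\tau}))$, with right-hand side controlled by exactly the factor appearing in \eqref{estimeéelliptique1}.

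Next I would record the two local Hölder estimates. For any ball with $B(y,2\rho) \subset \mathfrak{C}^h(0',\widetilde{R},\widetilde{\tau})$, the De Giorgi--Nash interior theory (or Morrey's $L^p$ regularity for Poisson's equation) gives
\[
\|u\|_{C^{0,\alpha_{1}}(\overline{B(y,\rho)})} \le C_1(\rho,N)\bigl(\|u\|_{L^\infty(B(y,2\rho))} + \|F\|_{L^\infty(B(y,2\rho))}\bigr),
\]
with a universal exponent $\alpha_1 = \alpha_1(N) \in (0,1)$. For points $y \in \partial\omega$, the uniform exterior cone condition produces a cone $V_y$ congruent to the reference cone $V$ with $\overline{\omega}\cap V_y = \{y\}$, and the classical cone-based boundary Hölder regularity (e.g.\ Gilbarg--Trudinger, Theorem~8.29) then yields, for sufficiently small $\rho$,
\[
\|u\|_{C^{0,\alpha_{2}}(\overline{B(y,\rho)\cap \omega})} \le C_2(\rho,N,V)\bigl(\|u\|_{L^\infty(B(y,2\rho)\cap\omega)} + \|F\|_{L^\infty(B(y,2\rho)\cap\omega)}\bigr),
\]
with an exponent $\alpha_2 = \alpha_2(N,V) \in (0,1)$ and a constant $C_2$ depending only on $\rho$, $N$ and the geometry of $V$. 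The crucial point is that both $\alpha_2$ and $C_2$ are uniform in $y$: this is precisely what the \emph{uniform} exterior cone condition buys.

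To conclude, I set $\alpha := \min\{\alpha_1,\alpha_2\} \in (0,1)$ and pick a scale $\rho_0 = \rho_0(r,\widetilde{R},\widetilde{\tau}) > 0$ small enough that every $x \in \overline{\mathfrak{C}^h(0',r,t)}$ falls into one of two cases: either $B(x,2\rho_0) \subset \mathfrak{C}^h(0',\widetilde{R},\widetilde{\tau})$ (interior case), or there exists $y \in \partial\omega$ with $|x-y|<\rho_0$ and $B(y,2\rho_0)\cap\overline{\omega} \subset \overline{\mathfrak{C}^h(0',\widetilde{R},\widetilde{\tau})}$ (boundary case). Such a $\rho_0$ exists because of the gap $r<\widetilde{R}/2$, $t<3\widetilde{\tau}/4$ between the smaller and the larger cylinder, together with $\widetilde{\tau} > 4\sup_{\overline{B'(0',\widetilde{R})}}|h|$, which provides the required vertical room. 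By compactness, finitely many such balls cover $\overline{\mathfrak{C}^h(0',r,t)}$; summing the corresponding local estimates and applying a standard comparison of Hölder seminorms on overlapping patches produces the global bound \eqref{estimeéelliptique1}, with $C=C(r,\widetilde{R},\widetilde{\tau},V,N)$.

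The main obstacle is the uniformity of the boundary Hölder exponent along $\partial\omega$: interior regularity and the covering step are routine, but the decay rate for $\mathrm{osc}_{B(y,\rho)}\,u$ as $\rho\to 0^+$ must be the same at every boundary point $y$. This is exactly what the uniform exterior cone condition guarantees, because a single barrier function tailored to the model cone $V$ can be translated to every $y\in\partial\omega$ and yields a common decay rate, so that both $\alpha_2$ and $C_2$ depend only on $N$ and $V$. Matching these cone-barrier arguments to the distributional, $H^1$-based framework of the statement, and carefully tracking the dependence on $L_f$ and $|f(0)|$ through $\|F\|_{L^\infty}$, is the technical heart of the proof.
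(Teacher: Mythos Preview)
Your outline is correct in spirit and close to the paper's argument, but the paper organizes things differently and is more explicit at the one delicate point. Rather than fixing a radius $\rho_0$ and covering by balls, the paper works directly with a pair $x,y\in\overline{\mathfrak{C}^h(0',r,t)}$ and splits according to $|x-y|$ and $d(y,T)$, where $T=\partial\omega\cap(\overline{B'(0',\widetilde R/2)}\times\R)$: the interior ingredient is Brandt's gradient bound (their Lemma~\ref{lem2.4}), and the boundary ingredient is an explicit oscillation estimate $\mathrm{osc}_{\Omega\cap B(x_0,r)}u\le C(N,V,\widetilde R)(\|u\|_{L^\infty}+\|f(u)\|_{L^N})\,r^\alpha$ proved via the measure-density condition and the weak Harnack inequality (their Proposition~\ref{prop0.1}). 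The nontrivial patching step is their Case~2.3 ($|x-y|<\tfrac14 d(y,T)\le \tfrac{R_1}{8}$): Brandt's inequality on $B(y,d(y,T)/2)$ produces a factor $d(y,T)^{-\alpha}\|u\|_{L^\infty(B(y,d(y,T)/2))}$, and the boundary oscillation estimate gives $\|u\|_{L^\infty(B(y,d(y,T)/2))}\lesssim d(y,T)^{\alpha}$, so the powers of $d(y,T)$ cancel. Your write-up hides exactly this step behind the citation of Gilbarg--Trudinger Theorem~8.29, but that theorem is a \emph{global} estimate requiring data on all of $\partial\Omega$; what one actually has locally is only the oscillation decay at each $y\in\partial\omega$ (their Theorem~8.27), and the passage from that to a full H\"older bound on $\overline{B(y,\rho)\cap\omega}$ is precisely the Case~2.3 combination. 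A second point worth being explicit about: the covering-by-balls phrasing suggests the final constant could depend on the number of balls and hence on $h$, which would be fatal for the application in the proof of Theorem~\ref{TH2} (where the proposition is applied uniformly to all translates $g_k$). The paper's pairwise case analysis makes it transparent that every constant involved depends only on $r,\widetilde R,\widetilde\tau,V,N$---never on the particular graph $h$---and you should make sure your version preserves this.
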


In order to prove the previous proposition we need two  preliminary results.

\begin{lem}\label{lem2.4}
	Let $N \geq 1$, $U \subset \R^{N}$ be a domain and $u \in C^{2}(U)$ be a solution of
	\begin{equation}\label{equation_de_poisson}
		\begin{array}{ccc}
			-\Delta u=f & \text{in} & U,
		\end{array}
	\end{equation}
	with $f \in C^{0}(U)$. \\
	Let $\delta >0$ and $y \in U$ such that $\overline{B(y,\delta)} \subset U$.  Then, for any 
	$x \in B(y,\frac{\delta}{2})$ we have
	\begin{equation}\label{inégalité_de_brandt}
		|u(x)-u(y)| \leq \dfrac{\sqrt{N}}{2^{1-\gamma}}\Big(2N\|u\|_{L^{\infty}(\overline{B(y,\delta)})} \delta^{-\gamma}+
		\|f\|_{L^{\infty}(\overline{B(y,\delta)})}\delta^{2-\gamma}\Big)|x-y|^{\gamma},
	\end{equation}
	with $\gamma \in ]0,1]$.
\end{lem}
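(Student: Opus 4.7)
\textbf{Proof plan for Lemma \ref{lem2.4}.} The strategy is to combine the classical pointwise (Brandt-type) gradient estimate for solutions of the Poisson equation with the fundamental theorem of calculus along the segment $[y,x]$, and then interpolate using $|x-y|\leq \delta/2$ to pass from exponent $1$ to exponent $\gamma\in(0,1]$.

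First, I would establish (or recall) the following componentwise gradient bound: if $v\in C^2(\overline{B(z,\rho)})$ satisfies $-\Delta v=g$ in $B(z,\rho)$, then for each $i$,
$$
|\partial_i v(z)|\leq \frac{N}{\rho}\,\|v\|_{L^\infty(\overline{B(z,\rho)})}+\frac{\rho}{2}\,\|g\|_{L^\infty(\overline{B(z,\rho)})}.
$$
The standard derivation is to split $v=v_1+v_2$, where $v_1$ is harmonic in $B(z,\rho)$ with $v_1=v$ on $\partial B(z,\rho)$, and $v_2$ vanishes on $\partial B(z,\rho)$ and solves $-\Delta v_2=g$. Since $\partial_i v_1$ is harmonic, the mean value property together with the divergence theorem yields $|\partial_i v_1(z)|\leq (N/\rho)\|v_1\|_{L^\infty}\leq (N/\rho)\|v\|_{L^\infty}$ (the last step by the maximum principle). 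For $v_2$, the Green representation on the ball and an explicit integration of $|\nabla_z G(z,\cdot)|$ give the term $(\rho/2)\|g\|_{L^\infty}$. Passing from componentwise to Euclidean norm then produces the factor $\sqrt{N}$:
$$
|\nabla v(z)|\leq \sqrt{N}\,\Bigl(\frac{N}{\rho}\,\|v\|_{L^\infty(\overline{B(z,\rho)})}+\frac{\rho}{2}\,\|g\|_{L^\infty(\overline{B(z,\rho)})}\Bigr).
$$

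Next, I would apply this estimate at every point $z$ on the segment from $y$ to $x$. Since $x\in B(y,\delta/2)$, any such $z$ satisfies $|z-y|\leq \delta/2$, so $\overline{B(z,\delta/2)}\subset \overline{B(y,\delta)}\subset U$. Using $\rho=\delta/2$ gives
$$
|\nabla u(z)|\leq \sqrt{N}\,\Bigl(\frac{2N}{\delta}\,\|u\|_{L^\infty(\overline{B(y,\delta)})}+\frac{\delta}{4}\,\|f\|_{L^\infty(\overline{B(y,\delta)})}\Bigr).
$$
The fundamental theorem of calculus yields $|u(x)-u(y)|\leq |x-y|\,\sup_{z\in[y,x]}|\nabla u(z)|$. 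To introduce the exponent $\gamma\in(0,1]$, I interpolate through the trivial bound $|x-y|^{1-\gamma}\leq (\delta/2)^{1-\gamma}=2^{\gamma-1}\delta^{1-\gamma}$. Multiplying out and absorbing the $\delta^{1-\gamma}$ into the two summands gives exactly an inequality of the form \eqref{inégalité_de_brandt}, with the constant $\sqrt{N}/2^{1-\gamma}$ emerging naturally from the interpolation.

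There is no serious obstacle here: the only delicate step is the derivation of the Brandt-type componentwise bound, which is a classical computation. All other steps are routine (segment parametrisation, interpolation). The statement in the lemma is a slightly loosened version of what the sharp argument produces (the coefficient of $\|f\|\,\delta^{2-\gamma}$ in the lemma is a non-sharp $1$ instead of the $1/4$ one would get from the argument above), so no further refinement is needed.
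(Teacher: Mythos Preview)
Your proposal is correct and follows essentially the same route as the paper: the paper also applies the Brandt componentwise gradient estimate (cited as Theorem~3.9 in \cite{gt}) at each point $z\in\overline{B(y,\delta/2)}$ with radius $\delta/2$, passes to $|\nabla u|$ via the factor $\sqrt{N}$, and then combines the mean value theorem with the same interpolation $|x-y|\leq (\delta/2)^{1-\gamma}|x-y|^{\gamma}$. Your observation that the argument actually yields the sharper constant $1/4$ in front of $\|f\|_{L^\infty}\delta^{2-\gamma}$ is also consistent with the paper's computation.
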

\begin{proof}
	By the mean value theorem, for any $\gamma \in ]0,1]$,
	\begin{equation}\label{Mean_value_theorem}
		\begin{split}
			|u(x)-u(y)| \leq \disp\sup_{z \in \overline{B(y,\frac{\delta}{2})}} |\nabla u(z)| |x-y| \leq 
			\frac{\delta^{1-\gamma}}{2^{1-\gamma}}\disp\sup_{z \in \overline{B(y,\frac{\delta}{2})}}|\nabla u(z)| |x-y|^{\gamma}.
		\end{split}
	\end{equation}
	Moreover,  for any $z \in \overline{B(y,\frac{\delta}{2})}$ we have $\overline{B(z,\frac{\delta}{2})}\subset \overline{B(y,\delta)}$, and so by Brandt's inequality (see \cite{br} or Theorem $3.9$ in \cite{gt}) 
	\begin{equation*}\label{vraie_inégalité_de_brandt}
		|\dr_{i}u(z)|\leq \frac{2 N}{\delta}\|u\|_{L^{\infty}(\dr B(z,\frac{\delta}{2}))}+\frac{\delta}{4} \|f\|_{L^{\infty}(\overline{B(y,\delta)})} \qquad \forall \, i \in \{1,\cdots,N\}.
	\end{equation*}
	Thus, for any $z \in \overline{B(y,\frac{\delta}{2})}$
	\begin{equation}\label{majoration_du_gradient}
		|\nabla u(z)|\leq \sqrt{N}\Big(\frac{2 N}{\delta}\|u\|_{L^{\infty}(\overline{ B(y,\delta)})}+\frac{\delta}{4} \|f\|_{L^{\infty}(\overline{B(y,\delta)})}\Big).
	\end{equation}
	The claim then follows by combining \eqref{Mean_value_theorem} and \eqref{majoration_du_gradient}.
\end{proof}

\medskip

To prove the next result, let us recall that a domain $\omega$ satisfies a \textit{uniform exterior regularity condition}, if 
\begin{equation}\label{Condition(A)}
\exists \, \gamma>0, \, \rho_{0}>0 \quad : \quad \forall \, \rho \in (0,\rho_{0}], \quad  \forall \, x_{0}\in \partial \omega, \qquad \frac{\mathcal{L}^{N}(\omega^{c} \cap B(x_{0},\rho))}{\mathcal{L}^{N}(B(x_{0},\rho))} \geq \gamma.
\end{equation}
Condition \eqref{Condition(A)} will be denoted by $C_{\gamma,\rho_0}$. 
It is well-known that any domain that satisfies a \textit{uniform exterior cone condition}, also satisfies a \textit{uniform exterior regularity condition} $C_{\gamma,\rho_0}$, with suitable parameters $\gamma$ and $\rho_0$ depending only on the reference cone. Consequently, the following result applies to any epigraph that satisfies a \textit{uniform exterior cone condition}.

\begin{prop}\label{prop0.1}
	Let $\omega$ be the epigraph of a continuous function $h : \mathbb{R}^{N-1}\rightarrow \mathbb{R}$ and suppose that 
	$\omega$ satisfies the condition $C_{\gamma,\rho_0}$.\\
	Let $\widetilde{R}>0$, $ \widetilde{\tau} > \max \left\lbrace \widetilde{R} , \displaystyle\sup_{\overline{B'(0',\widetilde{R})}} \vert h \vert\right\rbrace $, $ f \in L^{N}(\mathfrak{C}^{h}(0',\widetilde{R},\widetilde{\tau}))$ and 
	$u \in H^{1}(\mathfrak{C}^{h}(0',\widetilde{R},\widetilde{\tau})) \cap C^0(\overline{\mathfrak{C}^{h}(0',\widetilde{R},\widetilde{\tau})})$  satisfy
	\begin{equation}\label{equationestimé1}
		\left\{
		\begin{array}{ccc}
			-\Delta u=f  & \text{in} & \mathcal{D}'(\mathfrak{C}^{h}(0',\widetilde{R},\widetilde{\tau})),\\
			u=0 & \text{on} & \dr \omega \cap \widehat{\mathfrak{C}^{h}}(0',\widetilde{R},\widetilde{\tau}).
		\end{array}
		\right.
	\end{equation}     
Then, there are constants $C=C(N,\rho_{0},\gamma,\widetilde{R})>0$ and $\alpha=\alpha(N,\gamma) \in (0,1)$ such that for any $r>0$ and for any $x_0 \in \partial \omega \cap  \left( \overline{B'(0',\widetilde{R}/2)} \times \R \right) $,
\begin{equation*}
	\underset{\widehat{\mathfrak{C}^{h}}(0',\widetilde{R},\widetilde{\tau}) \cap B(x_{0},r)}{\text{osc}} u \leq C (\|u\|_{L^{\infty}(\mathfrak{C}^{h}(0',\widetilde{R},\widetilde{\tau}))}+\|f\|_{L^{N}(\mathfrak{C}^{h}(0',\widetilde{R},\widetilde{\tau}))}) r^{\alpha}.
\end{equation*} 
\end{prop}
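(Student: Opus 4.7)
The plan is to reduce the statement to the classical boundary H\"older regularity theorem for weak solutions of the Poisson equation that vanish on a portion of the boundary satisfying a uniform exterior measure density condition. Fix $x_0 \in \partial \omega \cap (\overline{B'(0', \widetilde R/2)} \times \R)$. Because $\widetilde \tau > \sup_{\overline{B'(0',\widetilde R)}} |h|$ and $x_0'$ lies strictly inside $B'(0',\widetilde R)$, there exists $r_0 = r_0(\widetilde R, \widetilde \tau, \|h\|_{\infty, B'(0', \widetilde R)}) > 0$ such that for every $r \leq r_0$ the ball $B(x_0, r)$ is compactly contained in $B'(0', \widetilde R) \times (-\widetilde \tau, \widetilde \tau)$; consequently $\partial \omega \cap B(x_0, r)$ coincides with the graph portion of the boundary, where $u$ has zero trace.

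First I would extend $u$ by zero to $B(x_0, r_0) \setminus \omega$, obtaining $\tilde u \in H^1(B(x_0, r_0)) \cap C^0(\overline{B(x_0, r_0)})$; the vanishing trace on the graph justifies this extension. A standard truncation/cut-off computation then shows that $\tilde u^+$ and $\tilde u^-$ are, respectively, non-negative weak subsolutions of $-\Delta v = \tilde f$ on $B(x_0, r_0)$, where $\tilde f$ is the extension of $f$ by $0$ outside $\omega$; by hypothesis $\tilde f \in L^N(B(x_0, r_0))$. Moser iteration then yields a quantitative sup-bound for $|\tilde u|$ on interior balls in terms of $\|u\|_{L^\infty(\mathfrak{C}^h(0',\widetilde R,\widetilde \tau))} + r^{\alpha_0}\|f\|_{L^N(\mathfrak{C}^h(0',\widetilde R,\widetilde \tau))}$.

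To convert this into a H\"older oscillation estimate I would invoke the De Giorgi oscillation decrement. The density condition $C_{\gamma, \rho_0}$ gives $\mathcal{L}^N(B(x_0, r) \setminus \omega) \geq \gamma \mathcal{L}^N(B(x_0, r))$ for every $r \leq \rho_0$, and $\tilde u$ vanishes on this set. Combining this measure density with the Moser $L^\infty$-bound applied to $M - \tilde u$ and $\tilde u - m$ (where $M = \sup_{B(x_0, 2r)}\tilde u$ and $m = \inf_{B(x_0, 2r)}\tilde u$) produces, by the classical De Giorgi argument, an oscillation decrement of the form
\begin{equation*}
\underset{B(x_0, r)}{\text{osc}} \, \tilde u \leq \theta \underset{B(x_0, 2r)}{\text{osc}} \, \tilde u + C r^{\alpha_0}\|f\|_{L^N(\mathfrak{C}^h(0',\widetilde R, \widetilde \tau))},
\end{equation*}
with $\theta \in (0,1)$ and $C, \alpha_0$ depending only on $N$ and $\gamma$. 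A standard dyadic iteration of this inequality produces $\text{osc}_{B(x_0, r)} \tilde u \leq C (\|u\|_{L^\infty} + \|f\|_{L^N}) r^\alpha$ for $r \leq \rho_0$, with $\alpha = \alpha(N, \gamma) \in (0, 1)$; for $\rho_0 \leq r \leq \widetilde R$ the estimate is trivial after absorbing a factor depending on $\widetilde R$ and $\rho_0$ into the constant. Restricting to the original set $\widehat{\mathfrak{C}^h}(0',\widetilde R, \widetilde \tau) \cap B(x_0, r)$, where $\tilde u = u$, concludes the proof.

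The main technical obstacle is the justification of the zero-extension across the graph portion of $\partial\omega$: one must check that $\tilde u$ lies in $H^1(B(x_0, r_0))$ with no singular part across $\partial\omega$ (which uses the vanishing trace hypothesis in an essential way), and that truncations of $\tilde u$ can be legitimately used as test functions in both $\omega$ and $B(x_0, r_0)\setminus \omega$. Everything else is an adaptation of well-established boundary regularity theory under the uniform density condition, and the H\"older exponent $\alpha$ produced in this way depends only on $N$ and the density parameter $\gamma$ inherited from the exterior cone.
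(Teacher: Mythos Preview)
Your approach is correct and follows essentially the same well-trodden path as the paper's proof, with one organizational difference worth noting. The paper does \emph{not} extend $u$ by zero across the graph; instead it works directly inside $\Omega = \mathfrak{C}^{h}(0',\widetilde{R},\widetilde{\tau})$ and invokes Theorem~8.26 of Gilbarg--Trudinger (the boundary weak Harnack inequality) applied to $M_4-u$ and $u-m_4$ on $\Omega\cap B(x_0,4r)$. The vanishing Dirichlet datum enters because, for $r\le \overline r := \min(\widetilde R/8,\rho_0/2,1)$, the portion of $\partial\Omega$ inside $B(x_0,4r)$ is exactly the graph, so the boundary infimum $m$ appearing in that theorem equals $M_4$ (respectively $-m_4$). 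The exterior density condition $C_{\gamma,\rho_0}$ is then used to bound the $L^1$-average of $(M_4-u)$ over the complement from below, which, combined with the weak Harnack upper bound, yields the same oscillation-decay inequality you wrote (with $4r$ in place of $2r$). The iteration is then Lemma~8.23 of \cite{gt}, and the trivial bound for $r>\overline r$ finishes the argument exactly as you describe.

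Your route via zero-extension and De Giorgi truncations is the standard alternative packaging of the same estimate; it trades the black-box use of Theorem~8.26 for an explicit verification that the extended truncations are admissible test functions, which you correctly flag as the only non-routine step. Either way the exponent depends only on $(N,\gamma)$ and the constant only on $(N,\gamma,\rho_0,\widetilde R)$. One small point: you let $r_0$ depend on $\|h\|_{\infty,B'(0',\widetilde R)}$ and $\widetilde\tau$, whereas the paper chooses $\overline r$ depending only on $\widetilde R$ and $\rho_0$; to match the stated constant dependence you should use the hypothesis $\widetilde\tau>\widetilde R$ (and $|x_0'|\le \widetilde R/2$) to make this choice uniform rather than letting $\|h\|_\infty$ enter.
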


\begin{proof}
To simplify the exposition we set $\Omega= \mathfrak{C}^{h}(0',\widetilde{R},\widetilde{\tau})$, $T=\partial \omega \cap  \left( \overline{B'(0',\widetilde{R}/2)} \times \R \right) $, $\overline{r}=\min(\frac{\widetilde{R}}{8},\frac{\rho_0}{2},1)$ and, for $r>0$ and $x_{0} \in T$, we also let $\Omega_{r}=\Omega\cap B(x_{0},r).$ \\
For any $ r \in (0, \overline{r}]$ we set $M_{4}=\displaystyle\sup_{\Omega_{4r}} \, u $, $m_{4}=\displaystyle\inf_{\Omega_{4r}} \, u $, $M_{1}=\displaystyle\sup_{\Omega_{r}} \, u$ and $m_{1}=\displaystyle\inf_{\Omega_{r}} \, u$ and we apply Theorem 8.26 in \cite{gt} to the function $W_4 =M_4-u$, with $q=2N$ and $p=1,$ to obtain
\begin{equation}\label{pre-premiere_inegaliteW4}
		\begin{split}
			r^{-N}\|W_{4,m}^{-}\|_{L^{1}(B_{2r}(x_{0}))}&\leq C_{1}(N) \left[ \displaystyle \inf_{B_{r}(x_{0})} W_{4,m}^{-} +r\|f\|_{L^{N}(\Omega)}\right],\\
		\end{split}
	\end{equation} 
where $m = \inf_{\partial \Omega \cap  B_{4r}(x_{0})} W_4$ (here we have used the notations of Theorem 8.26 in \cite{gt}).
To proceed further, we observe that $m= M_4$, since  $\widetilde{\tau} > \widetilde{R} \geq 4 \overline{r} \geq 4r$ implies 
$\partial \Omega \cap  B_{4r}(x_{0}) \subset \partial \omega \cap \widehat{\mathfrak{C}^{h}}(0',\widetilde{R},\widetilde{\tau})$. Hence 
   \begin{equation}\label{premiere_inegaliteW4}
		\begin{split}
			r^{-N}\|W_{4,m}^{-}\|_{L^{1}(B_{2r}(x_{0}))}&\leq C_{1}(N) \left[ \displaystyle\inf_{B_{r}(x_{0})} W_{4,m}^{-} + 
			r\|f\|_{L^{N}(\Omega)} \right] \\
			&\leq C_{1}(N)[M_{4}-M_{1}+r\|f\|_{L^{N}(\Omega)}]. 
		\end{split}
	\end{equation} 
Moreover, since $2r \leq \rho_{0}$, we also have  
\begin{equation}\label{deuxieme_inegaliteW4}
	r^{-N} \|W_{4,m}^{-}\|_{L^{1}(B_{2r}(x_{0}))} \geq r^{-N} M_4  \mathcal{L}^{N}(\Omega^{c} \cap B(x_0,2r)) 
	\geq M_4   \gamma {\mathcal{L}^N}(B(0,2)) .
\end{equation}
In the latter we have used that the epigraph $\omega$ satisfies the uniform condition $C_{\gamma,\rho_0}$ at $x_0$.

Therefore, from \eqref{premiere_inegaliteW4} and \eqref{deuxieme_inegaliteW4} we deduce that 
\begin{equation}\label{troisieme_inegaliteW4}
	{\mathcal{L}^N}(B(0,2)) \gamma M_{4} \leq C_{1}[M_{4}-M_{1}+r\|f\|_{L^{N}(\Omega)}].
\end{equation}
The same argument applied to the function $w_{4} =u - m_{4}$ yields 
\begin{equation}\label{inegalitew4}
	- {\mathcal{L}^N}(B(0,2)) \gamma m_4 \leq C_{1}[m_{1}-m_{4}+r\|f\|_{L^{N}(\Omega)}],
\end{equation}
hence, by adding \eqref{troisieme_inegaliteW4} and \eqref{inegalitew4} we obtain 
\begin{equation*}
	  {\mathcal{L}^N}(B(0,2)) \gamma (M_{4}-m_{4}) \leq  C_1  (M_{4}-m_{4}) -C_{1} (M_{1}-m_{1}) +2rC_{1} \|f\|_{L^{N}(\Omega)}.
\end{equation*}
Thus
\begin{equation}
	\underset{\Omega_{r}}{\text{osc}} \, u  \leq \Big(1 - \frac{{\mathcal{L}^N}(B(0,2)) \gamma}{C_{1}} \Big) \underset{\Omega_{4r}}{\text{osc}} \, u  +2r \|f\|_{L^{N}(\Omega)}.
\end{equation}
Now, we observe that the function $w(R):=\underset{\Omega_R}{\text{osc}} \, u  + 2R \|f\|_{L^{N}(\Omega)}$ is well-defined and non-decreasing on $(0, \overline{r}]$ and satisfies 
\begin{equation*}
	\begin{split}\
	     w(R/4) & = \underset{\Omega_{R/4}}{\text{osc}} \, u  + \frac{R}{2} \|f\|_{L^{N}(\Omega)} 
		\leq \Big(1 - \frac{{\mathcal{L}^N}(B(0,2)) \gamma}{C_{1}} \Big) \underset{\Omega_{R}}{\text{osc}} \, u  + \frac{R}{2} 
		\|f\|_{L^{N}(\Omega)} +\frac{R}{2} \|f\|_{L^{N}(\Omega)}\\
		& = \Big(1 - \frac{{\mathcal{L}^N}(B(0,2)) \gamma}{C_{1}} \Big) \underset{\Omega_{R}}{\text{osc}} \, u  +\frac{1}{2} \times 2R \|f\|_{L^{N}(\Omega)} \\
		& \leq \max \left[ 1 - \frac{{\mathcal{L}^N}(B(0,2)) \gamma}{C_{1}},\frac{1}{2}\right]  \left[ \underset{\Omega_{R}}{\text{osc}} \, u  + 2R \|f\|_{L^{N}(\Omega)}\right] = C_2 w(R) \\
	\end{split}
\end{equation*}
where $ C_2 = C_2(N,\gamma) := \max \left[ 1 - \frac{{\mathcal{L}^N}(B(0,2)) \gamma}{C_{1}},\frac{1}{2}\right] \in (0,1)$. Hence, we can apply Lemma $8.23$ in \cite{gt} to get  
\begin{equation}
w(R) \leq C_3 \left( \frac{R}{\overline{r}}\right) ^{\alpha} w(\overline{r}) \qquad \forall \, R \in (0, \overline{r}],
\end{equation}
where $ C_3=C_3(N,\gamma)>0$ and $\alpha=\alpha(N,\gamma) \in (0,1)$. From the latter we immediately infer that 
\begin{equation}\label{rpetit}
	\underset{\Omega_{r}}{\text{osc}}\, u \leq w(r)\leq C_{3} \Big(\frac{r}{\overline{r}}\Big)^{\alpha}w(\overline{r})\leq 2 C_{3} \Big(\frac{r}{\overline{r}}\Big)^{\alpha} ( \|u\|_{L^{\infty}(\Omega)} + \|f\|_{L^{N}(\Omega)} ) \qquad \forall \, r \in (0, \overline{r}].
\end{equation}

\noindent Now, if $r>\overline{r}$ then for any $x,y \in \Omega_{r}$
\begin{equation*}
	u(x)-u(y)\leq |u(x)-u(y)| \leq 2\|u\|_{L^{\infty}(\Omega)}\times 1 \leq 2\|u\|_{L^{\infty}(\Omega)} \Big(\frac{r}{\overline{r}}\Big)^{\alpha},
\end{equation*} 
thus 
\begin{equation}\label{rgrand}
	\underset{\Omega_{r}}{\text{osc}} \, u \leq 2\|u\|_{L^{\infty}(\Omega)} \Big(\frac{r}{\overline{r}}\Big)^{\alpha} \qquad \forall \, r > \overline{r}. 
\end{equation}
Finally, \eqref{rpetit} and \eqref{rgrand} imply the desired conclusion. 
\end{proof}

We are now ready to prove  Proposition \ref{prop2.2}.

\textit{Proof of Proposition \ref{prop2.2}.} \\  Set $\Omega=\mathfrak{C}^{h}(0',\widetilde{R},\widetilde{\tau})$, $R_{1}=\min(\frac{\widetilde{R}}{2}-r,\frac{\widetilde{\tau}}{2})$ and pick $x,y  \in \mathfrak{C}^{h}(0',r,t)$ with $x\neq y $. \\ 

\noindent \textit{1.} If $|x-y|\geq \dfrac{R_{1}}{4}$ then, for any $\gamma \in (0,1)$, we have
			\begin{equation}\label{estimée_1}
				\begin{split}
				|u(x)-u(y)| & \leq 2\|u\|_{L^{\infty}(\Omega)} =  2\|u\|_{L^{\infty}(\Omega)} \Big(\frac{4}{R_{1}}\Big)^{\gamma}\Big(\frac{R_{1}}{4}\Big)^{\gamma} \\
				& \leq 2  \Big(\frac{4}{R_{1}}\Big)^{\gamma}  \|u\|_{L^{\infty}(\Omega)} |x-y|^{\gamma} = M_1  \|u\|_{L^{\infty}(\Omega)}  |x-y|^{\gamma}.
			\end{split}
			\end{equation}
\textit{2.} If $|x-y|< \dfrac{R_{1}}{4}$, then we consider the set  
$$T: = \left\lbrace x=(x',x_{N})\in \R^{N} \, : \, x'\in \overline{B'(0',\widetilde{R}/2)} \quad  \text{and} \quad  x_{N}=h(x') \right\rbrace $$
and, by observing that $x$ and $y$ play a symmetric role,  we distinguish (only) three cases. 

\medskip
			
\textit{Case 2.1(see Figure \ref{fig:cas1})} : $d(y,T)>\frac{R_{1}}{2}$.

\begin{figure}[!h]
	\centering
	\includegraphics[width=0.6\textwidth]{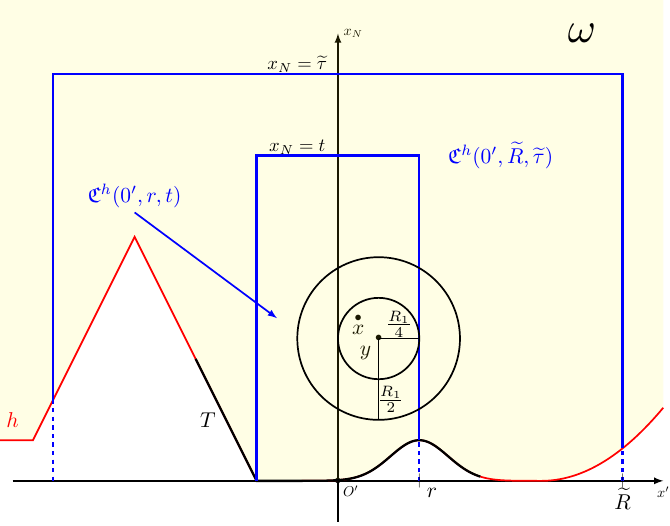}
	\caption{Case 2.1}
	\label{fig:cas1}
\end{figure}

\noindent In this case we have $\overline{B(y,\frac{R_{1}}{2})}\subset \mathfrak{C}^{h}(0',\widetilde{R},\widetilde{\tau}).$ To see this, we first observe that $\overline{B(y,\frac{R_{1}}{2})}\subset \omega$, thanks to the assumption $d(y,T)>\frac{R_{1}}{2}$ and by the definition of $R_1$, and thus, for any  $z \in \overline{B(y,\frac{R_{1}}{2})}$, we have 
			\begin{equation*}
				h(z') < z_{N}=z_{N}-y_{N}+y_{N}< z_{N}-y_{N}+t \leq \frac{R_{1}}{2}+\frac{3\widetilde{\tau}}{4} < \frac{\widetilde{\tau}}{4}+ \frac{3\widetilde{\tau}}{4} = \widetilde{\tau},
			\end{equation*}
			and 
			\begin{equation*}
				|z'|\leq |z'-y'|+|y'| \leq \frac{R_{1}}{2}+r< \frac{\widetilde{R}}{2}-r+r<\widetilde{R}. 
			\end{equation*}
Now by applying  Lemma \ref{lem2.4} \footnote{\, Note that $u \in C^2$ by standard elliptic regularity results, since 
$u$ is continuous and $f$ is locally Lipschitz-continuous.}  with $\delta=\frac{R_{1}}{2}$ and for any $\gamma \in (0,1)$, we deduce
			\begin{equation}
				\begin{split}
					|u(x)-u(y)| &\leq \dfrac{\sqrt{N}}{2^{1-\gamma}}\Big(2N\|u\|_{L^{\infty}(\Omega)} 
					\Big(\frac{R_{1}}{2}\Big)^{-\gamma}+\|f(u)\|_{L^{\infty}(\Omega)}\Big(\frac{R_{1}}{2}\Big)^{2-\gamma}\Big)
					|x-y|^{\gamma}\\
					&\leq M_{2}(L_{f}+|f(0)|+1)(\|u\|_{L^{\infty}(\Omega)}+1)|x-y|^{\gamma}
				\end{split}\label{estimée_2}
			\end{equation}
		where $M_{2}=\dfrac{\sqrt{N}}{2^{1-\gamma}}\Big(2N \Big(\dfrac{R_{1}}{2}\Big)^{-\gamma}+\Big(\dfrac{R_{1}}{2}\Big)^{2-\gamma}\Big)$.\\

\textit{Case 2.2 (see Figure \ref{fig:cas2})} : $d(x,T)\leq d(y,T) \leq \dfrac{R_{1}}{2}$ and $|x-y| \geq \frac{1}{4}d(y,T).$\\

\begin{figure}[!h]
		\centering
		\includegraphics[width=0.6\textwidth]{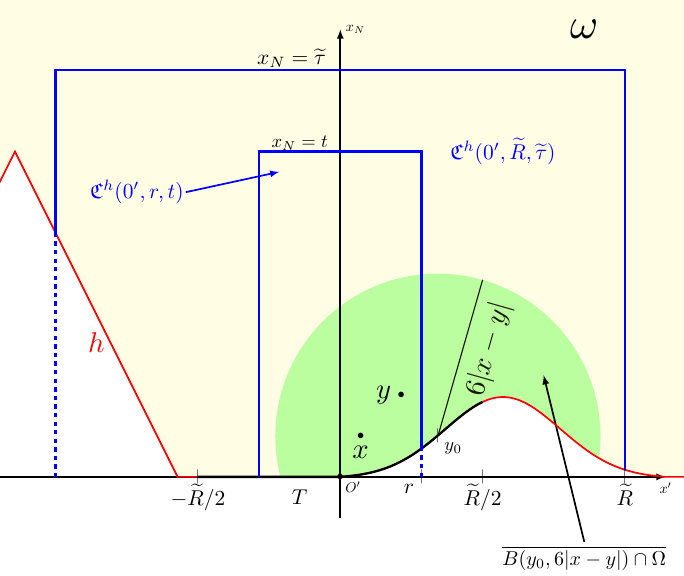}
		\caption{Case 2.2}
		\label{fig:cas2}
\end{figure}

 Since $T$ is a compact set, there exists  $y_{0}=(y_{0}',h(y_{0}')) \in T$ such that $d(y,T)=|y-y_{0}|$. \\Therefore 
			 \begin{equation*}
			x,y \in B(y_{0},6|x-y|),
			\end{equation*}
			 since 
			\begin{align*}
				&|x-y_{0}|\leq |x-y|+|y-y_{0}|\leq  |x-y|+4|x-y|=5|x-y|,\\
				\intertext{and}
				&|y-y_{0}|=d(y,T)\leq 4|x-y|.
			\end{align*}
		Now, we apply Proposition \ref{prop0.1} with $r=6|x-y|$ and $x_{0}=y_{0}$, thus there exist constants $C'=C'(N,V,\widetilde{R})>0$ and $\alpha=\alpha(N,V) \in (0,1)$ such that  
		\begin{equation}
			\begin{split}
			|u(x)-u(y)|&\leq 6^{\alpha}C' (\|u\|_{L^{\infty}(\Omega)}+\|f(u)\|_{L^{N}(\Omega)}) |x-y|^{\alpha}\\
			&\leq M_{3}(L_{f}+|f(0)|+1)(\|u\|_{L^{\infty}(\Omega)}+1)|x-y|^{\alpha}.
		\end{split}\label{estimé3}
		\end{equation}
where $M_{3}=6^{\alpha}C' \left[ 1 + \left( 2\widetilde{\tau}\mathcal{L}^{N-1}(B'(0',1)) \widetilde{R}^{N-1} \right)^{\frac{1}{N}} \right] $.\\

\textit{Case 2.3 (see Figure \ref{fig:cas3})}: $d(x,T)\leq d(y,T) \leq \dfrac{R_{1}}{2}$ and $ |x-y| < \frac{1}{4} d(y,T)$. \\ 

\begin{figure}[!h]
		\centering
		\includegraphics[width=0.6\textwidth]{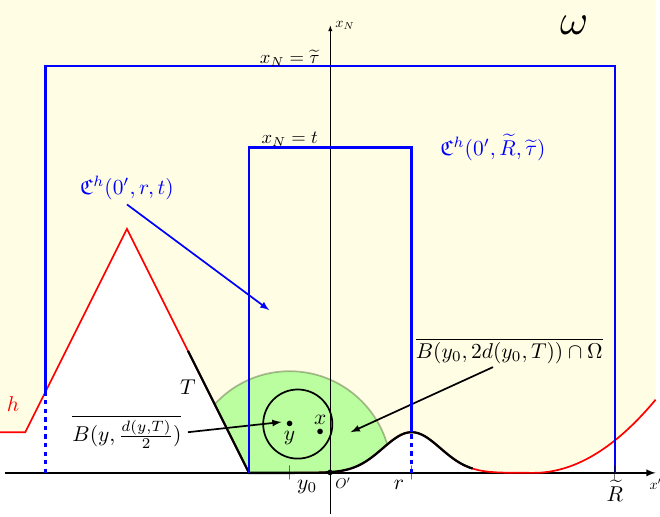}
		\caption{Case 2.3}
		\label{fig:cas3}
\end{figure}

By applying Lemma  \ref{lem2.4} with $\delta=\frac{d(y,T)}{2}$ and $\gamma= \alpha$, 
			\begin{equation}
				\begin{split}
					|u(x)-u(y)| &\leq \frac{\sqrt{N}}{2^{1-\alpha}}\Big( \|f(u)\|_{L^{\infty}(\Omega)} \Big(\frac{d(y,T)}{2}\Big)^{2-\alpha}\\
					&+2N \|u\|_{L^{\infty}(\overline{ B(y,\frac{d(y,T)}{2})})} \Big(\frac{d(y,T)}{2}\Big)^{-\alpha}\Big)  |x-y|^{\alpha} .
				\end{split}\label{brandt-split}
			\end{equation}
			Now, we want to estimate  $\|u\|_{L^{\infty}(\overline{ B(y,\frac{d(y,T)}{2})})} $.  Let $ z \in \overline{ B(y,\frac{d(y,T)}{2})}$ and $y_{0}=(y'_{0},h(y'_{0}))\in T$ such that $d(y,T)=|y_{0}-y|.$ Then $z \in \Omega \cap B(y_{0},2d(y,T))$
			since
			\begin{equation*}
				|z-y_{0}|=|z-y+y-y_{0}|\leq |z-y|+|y-y_{0}| \leq \frac{3}{2}d(y,T) < 2d(y,T).
			\end{equation*}
			Thus we can apply Proposition \ref{prop0.1} with $r=2d(y,T)$ and $x_{0}=y_{0}$ to get
			\begin{equation*}
				|u(z)|= |u(z) - u(y_0)| \leq 2^{\alpha}C'(\|u\|_{L^{\infty}(\Omega)}+\|f(u)\|_{L^{N}(\Omega)})d(y,T)^{\alpha}.
			\end{equation*}
			Hence
			\begin{equation}
				\|u\|_{L^{\infty}(\overline{B(y,\frac{d(y,T)}{2})})}\leq 2^{\alpha}C'(\|u\|_{L^{\infty}(\Omega)}+\|f(u)\|_{L^{N}(\Omega)})d(y,T)^{\alpha},
				\label{estimée_3}
			\end{equation}
			and by \eqref{brandt-split} and \eqref{estimée_3}, we have
			\begin{equation}
				|u(x)-u(y)|\leq M_{4}(L_{f}+|f(0)|+1)(\|u\|_{L^{\infty}(\Omega)}+1)|x-y|^{\alpha}.\label{estimée_4}
			\end{equation}
		where $M_{4}=\dfrac{\sqrt{N}}{2^{1-\alpha}} \left\lbrace \Big( \dfrac{R_{1}}{4}\Big)^{2-\alpha} +2N2^{2\alpha}C'
	\left[ 1 + \left( 2\widetilde{\tau}\mathcal{L}^{N-1}(B'(0',1)) \widetilde{R}^{N-1} \right)^{\frac{1}{N}} \right] \right\rbrace  $.\\

\noindent The desired conclusion \eqref{estimeéelliptique1} then follows from \eqref{estimée_1},\eqref{estimée_2},\eqref{estimé3} and \eqref{estimée_4} by taking  $C=\max(M_{1},M_{2},M_{3},M_{4})$. \qed

\section{Proofs} \label{SectC}
Thanks to the translation invariance of problem \eqref{NonLin-PoissonEq}, we may and do suppose that the 
function $g\,:\, \mathbb{R}^{N-1}\rightarrow \mathbb{R}$, defining $ \partial \Omega $, satisfies 
$\inf_{\mathbb{R}^{N-1}} \, g=0$.

\smallskip

The proofs of our main results are based on the moving planes method {\textit {suitably adapted to the geometry of the epigraph $\Omega$}}. To this end, we first set the notations that will be used in our analysis. 

\medskip

For $0<a<b$ and  $\lambda>0$ we set (see Figures \ref{fig:cas21} and \ref{fig:cas22}):

\smallskip
	
$ \Sigma_{\lambda} := \{ \,  x \in \R^N \, : \, 0 < x_N < \lambda  \, \}$,		
		
$\Sigma_{b}^{g}=\lbrace x=(x',x_{N})\in \R^{N} \, : \,  g(x')<x_{N}<b \rbrace$,  

$\Sigma _{a,b}^{g}=\lbrace x=(x',x_{N})\in \R^{N} \, : \, g(x')+a<x_{N}<b \rbrace$,

$$
\forall x \in \Sigma_{\lambda}^{g}, \qquad u_{\lambda}(x)=u(x',2\lambda-x_{N}) .
$$

\begin{multicols}{2}
	
	\begin{figure}[H]
		\centering
		\includegraphics[width=8.1cm]{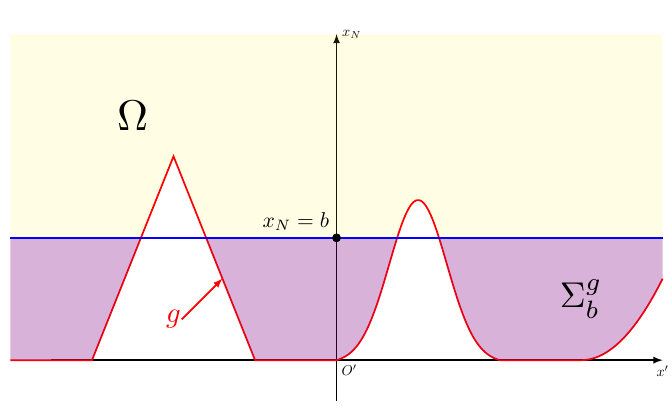}	
		\caption{Case 2.1}
		\label{fig:cas21}
	\end{figure}
	\begin{figure}[H]
		\centering
		\includegraphics[width=8.3cm]{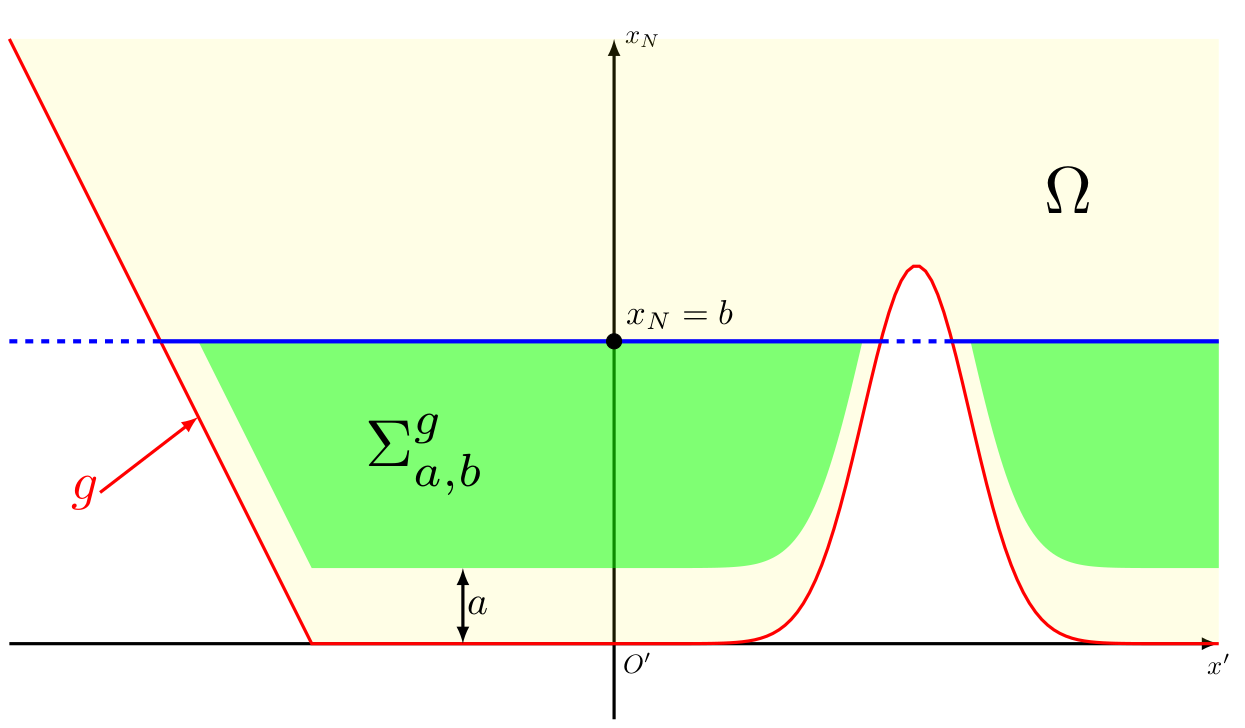}
		\caption{Case 2.2}
		\label{fig:cas22}
	\end{figure}
\end{multicols}

In the following, for any subset $S \subseteq \R^N$, we denote by $UC(S)$ the set of uniformly continuous functions defined on $S$. 

\bigskip

\textit{Proof of Theorem \ref{TH1}.}

We set $\Lambda:= \lbrace t >0  \,\, : \,\, u \leqslant u_{\theta} \,\,\, \text{in} \,\,\, \Sigma_{\theta}^{g} \,, \,\,  \forall \, 
0<\theta< t \rbrace$ and we aim at proving that 
$$
\tilde{t}:=\sup \Lambda = +\infty.
$$
To this end we split the remaining part of the proof into three steps.

\medskip

\textit{Step 1 : $\Lambda$ is not empty.}   

For any $\theta >0$ small enough, we have
		$u,u_{\theta} \in H^{1}_{\text{loc}}(\overline{\Sigma^{g}_{\theta}})\cap UC(\overline{\Sigma_{\theta}^{g}})$ and 
		\begin{equation*}
			\left\{
			\begin{array}{ccl}
				- \Delta u-f(u) =0=-\Delta u_{\theta}-f(u_{\theta}) & \text{in} & \Sigma_{\theta}^{g},\\  
				u\leq  u_{\theta} & \text{on} & \partial \Sigma_{\theta}^{g}.
			\end{array}
			\right.
		\end{equation*} 
For the latter, notice that $\partial \Sigma_{\theta}^{g}=(\lbrace x_{N}=\theta \rbrace \cap \overline \Omega ) \cup (\partial \Omega \cap \lbrace x_{N}<\theta\rbrace)$ and so

		\begin{itemize}
			\item if $x \in \lbrace x_{N}=\theta \rbrace \cap \overline\Omega,$ then $u(x)=u_{\theta}(x),$
			\item if $x \in \partial \Omega \cap \lbrace x_{N}< \theta \rbrace,$ then $u(x)=0$ and 
			$u_{\theta}(x)=u(x',2\theta -x_{N}) > 0,$ since $(x',2 \theta -x_{N} )\in \Omega$.
		\end{itemize}

Since $\Sigma^{g}_{\theta} \subseteq \{ \,  x \in \R^N \, : \, 0 < x_N < \theta  \, \}$, we also have that 
$\mathtt{S}_{e_N} ( \Sigma^{g}_{\theta} ) \leq \theta $. Moreover, $u$ has at most linear growth on $\Sigma^{g}_{\theta}$, since $u$ is uniformly continuous on $\Sigma^{g}_{\theta}$ by assumption. Therefore, we can apply Theorem \ref{thComp2} (see Remark \ref{rem-striscia-exp}), with any $\theta < \varepsilon(L_{f}, \gamma),$\footnote{\,  Here $\gamma$ can be any positive real number.} to get 
		\begin{equation*}
			u \leq  u_{\theta} \quad \mbox{on} \quad \Sigma_{\theta}^{g}.
		\end{equation*}
		Hence, $(0,\varepsilon(L_{f}, \gamma)) \subset \Lambda$.
		
\medskip		
		
\textit{Step 2 : $\tilde{t}=\sup \Lambda = +\infty$. } 
		  
If $\tilde{t}:=\sup \Lambda < +\infty$, then we have 
\begin{prop}\label{PropDS} 
		For every $ \delta \in (0, \frac{\tilde t}{2})$  there is $ \varepsilon(\delta) >0 $ such that 
		\begin{equation}\label{enunciatoDS}
		\forall  \, \varepsilon \in (0, \varepsilon(\delta))  \qquad u \le u_{\tilde t + \varepsilon}  \quad \text{on} \quad \overline{\Sigma_{\delta,\tilde{t}-\delta}^{g}}.
		\end{equation}
	\end{prop}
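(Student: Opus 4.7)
The plan is to derive a uniform strict inequality $u_{\tilde{t}}-u \geq 2\eta > 0$ on the slab $\overline{\Sigma_{\delta,\tilde{t}-\delta}^{g}}$ and then absorb the $O(\varepsilon)$ perturbation $u_{\tilde{t}+\varepsilon}-u_{\tilde{t}}$ via the uniform continuity of $u$ on finite strips. First, by definition of $\tilde{t}$ we have $u\leq u_{\theta}$ on $\Sigma_{\theta}^{g}$ for every $\theta<\tilde{t}$; letting $\theta\to\tilde{t}^{-}$ and using the continuity of $u$ yields $w:=u_{\tilde{t}}-u\geq 0$ on $\overline{\Sigma_{\tilde{t}}^{g}}$. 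The function $w$ solves a linear elliptic equation $-\Delta w+c(x)w=0$ with $c\in L^{\infty}_{loc}$, combining the local Lipschitz continuity of $f$ with the boundedness of $u$ on finite strips (this latter follows from uniform continuity, $u=0$ on $\partial\Omega$, and $g$ being bounded from below: any point at height $x_{N}<R$ can be joined vertically to a boundary point within distance $R-\inf g$). Since $w=u_{\tilde{t}}>0$ on the boundary part $\partial\Omega\cap\{x_{N}<\tilde{t}\}$ (the reflected point being strictly inside $\Omega$) and this part is accessible from every interior point by a vertical segment, the strong maximum principle forces $w>0$ throughout $\Sigma_{\tilde{t}}^{g}$.

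Next I would prove the crucial uniform lower bound $\inf_{\overline{\Sigma_{\delta,\tilde{t}-\delta}^{g}}} w > 0$ by a contradiction-and-translation argument. Assuming a sequence $x_{n}\in\overline{\Sigma_{\delta,\tilde{t}-\delta}^{g}}$ with $w(x_{n})\to 0$, define the horizontal translates $u_{n}(x):=u(x'+x_{n}',x_{N})$, $g_{n}(x'):=g(x'+x_{n}')$, and $\Omega_{n}:=\{x_{N}>g_{n}(x')\}$. By the uniform continuity of $g$ and by equiboundedness and equicontinuity of the $u_{n}$ on sets $\{x_{N}<R\}$, Arzel\`a--Ascoli (applied along a diagonal subsequence) provides a pair $(g_{\infty},u_{\infty})$ with $u_{\infty}\geq 0$ solving $-\Delta u_{\infty}=f(u_{\infty})$ in $\Omega_{\infty}:=\{x_{N}>g_{\infty}(x')\}$, vanishing on $\partial\Omega_{\infty}$, and satisfying $u_{\infty}\leq (u_{\infty})_{\tilde{t}}$ on $\Sigma_{\tilde{t}}^{g_{\infty}}$ (obtained by passing the inequality $u_{n}\leq (u_{n})_{\theta}$ to the limit for $\theta<\tilde{t}$, then letting $\theta\to \tilde{t}^{-}$). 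Extracting further, $x_{n,N}\to x_{\infty}\in[g_{\infty}(0')+\delta,\,\tilde{t}-\delta]$, and $w(x_{n})\to 0$ becomes the interior tangency $(u_{\infty})_{\tilde{t}}(0',x_{\infty})=u_{\infty}(0',x_{\infty})$.

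The strong maximum principle applied to the nonnegative difference $(u_{\infty})_{\tilde{t}}-u_{\infty}$ then forces it to vanish identically on the connected component of $\Sigma_{\tilde{t}}^{g_{\infty}}$ containing $(0',x_{\infty})$; this component reaches $\partial\Omega_{\infty}\cap\{x_{N}<\tilde{t}\}$ by a vertical segment, and evaluating the identity $u_{\infty}=(u_{\infty})_{\tilde{t}}$ at a boundary point yields $u_{\infty}(x_{0}',2\tilde{t}-g_{\infty}(x_{0}'))=0$, an interior zero of the nonnegative function $u_{\infty}$ in $\Omega_{\infty}$. Assumption \eqref{cond-in-zero}, i.e.\ $f(t)\geq ct$ on $[0,\delta_{*}]$ for some $c,\delta_{*}>0$, combined with a first-Dirichlet-eigenvalue estimate on a sufficiently large ball contained in $\Omega_{\infty}$ (available because $g_{\infty}$ inherits the lower bound of $g$), then rules out such an interior zero, giving the desired contradiction. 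Once $\inf_{\overline{\Sigma_{\delta,\tilde{t}-\delta}^{g}}} w =: 2\eta > 0$ is established, the uniform continuity of $u$ on $\Omega\cap\{x_{N}<\tilde{t}+1\}$ provides $\varepsilon(\delta)>0$ such that $\lvert u_{\tilde{t}+\varepsilon}-u_{\tilde{t}}\rvert<\eta$ on $\overline{\Sigma_{\delta,\tilde{t}-\delta}^{g}}$ whenever $\varepsilon\in(0,\varepsilon(\delta))$; hence $u_{\tilde{t}+\varepsilon}\geq u_{\tilde{t}}-\eta\geq u+\eta>u$ on that slab, which is the claim.

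The main obstacle is the closing step of the contradiction: one must rule out that the limit $u_{\infty}$ vanishes identically on a connected component of $\Omega_{\infty}$, since the SMP argument above only forces an interior zero and then propagates it via SMP on $u_{\infty}$ itself. This is precisely where the superlinearity at $0$ encoded in \eqref{cond-in-zero} enters in an essential way, via an eigenvalue estimate on a large ball embedded in $\Omega_{\infty}$ whose existence is guaranteed by the lower bound on $g$ being inherited by $g_{\infty}$; without \eqref{cond-in-zero}, the constant zero could be a solution and the argument would collapse.
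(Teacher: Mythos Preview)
Your overall architecture is the paper's: argue by contradiction, translate horizontally by $(x_n',0)$, extract a limit pair $(g_\infty,u_\infty)$ via Arzel\`a--Ascoli (using the uniform continuity of $g$ and of $u$ on finite strips), produce an interior tangency $u_\infty(0',x_\infty)=(u_\infty)_{\tilde t}(0',x_\infty)$, and propagate it by the strong maximum principle to an interior zero of $u_\infty$ in $\Omega_\infty$. Your two-step decomposition---first a uniform lower bound for $u_{\tilde t}-u$ on the slab, then absorption of the $O(\varepsilon)$ drift via the uniform continuity of $u$---is a legitimate minor variant of the paper's single contradiction with sequences $\varepsilon_k\to 0$ and $x^k$.

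The gap is in excluding $u_\infty\equiv 0$. An eigenvalue estimate on a large ball $B\subset\Omega_\infty$, applied to $u_\infty$ alone, does not do this: when $f(0)=0$ (compatible with \eqref{cond-in-zero}, e.g.\ $f(t)=t$) the zero function solves $-\Delta v=f(v)$ on $\Omega_\infty$, and nothing intrinsic to the limit problem rules it out. You must transfer the strict positivity of the \emph{original} solution to the limit. The paper does this by the sliding method: choosing $R$ so that $\lambda_1(B_R)<\eta$ (with $f(t)\ge\eta t$ on $[0,\varepsilon_0]$), it fixes $\mathcal B=B((0',T),R)$ at height $T$ large enough that $\overline{\mathcal B}+((x^k)',0)\subset\Omega$ for all $k$, scales the first eigenfunction to a subsolution $\phi$ with $0<\phi<\min\{\varepsilon_0,\min_{\overline{\mathcal B}}u\}$, and slides horizontally within the epigraph to get $\phi<u_k$ on $\mathcal B$ for every $k$, whence $0<\phi\le u_\infty$ on $\mathcal B$. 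Your idea can be salvaged without sliding, but only if the comparison is run on the strictly positive $u_n$ rather than on $u_\infty$: if $u_\infty\equiv 0$ then for large $n$ one has $\overline B\subset\Omega_n$ and $0<u_n<\varepsilon_0$ on $\overline B$, and the touching point of $u_n$ with the largest multiple $t_n\phi_1\le u_n$ can lie neither on $\partial B$ (where $u_n>0=t_n\phi_1$) nor in $B$ (where $-\Delta(u_n-t_n\phi_1)\ge(\eta-\lambda_1(B))\,t_n\phi_1>0$ conflicts with an interior minimum). Either way, the decisive input is $u_n>0$; as you phrased it---an argument on $u_\infty$ in $\Omega_\infty$---the step does not close.
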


{\textit {Proof of Proposition \ref{PropDS}.}}  If the claim were not true, there would exist 
$ \delta \in (0, \frac{\tilde t}{2})$ such that 
	\begin{equation}\label{contrad-enunciatoDS}
	\forall  \, k \geq 1 \quad \exists  \, \varepsilon_k \in \left( 0, \frac{1}{k}\right), \, \exists \, x^k \in \overline{\Sigma_{\delta,\tilde{t}-\delta}^{g}} \quad : \quad u(x^k) > u_{\tilde t + \epsilon_k}(x^k),
	\end{equation}
and so 
\begin{equation}\label{controllo-AA}
\delta \leq g((x^{k})')+ \delta < x_{N}^{k}< \tilde{t}-\delta, \qquad \forall k \geq 1.
\end{equation}
Therefore, the sequence $(x^{k}_{N})$ is bounded, and so, up to a subsequence, we may and do suppose that 
$x^k_N \to x^{\infty}_N$, as $k \to \infty$.

Now, set  
\begin{equation}\label{trasl-g_k}
\begin{array}{ccc}
g_{k}(x')=g(x' + (x^{k})') \qquad \forall x' \in \R^{N-1}, \quad \forall k \geq 1,
\end{array} 
\end{equation}
and observe that the sequence $(g_k)$ is uniformly equicontinuous on  $\R^{N-1}$ (since 
$g\in UC(\R^{N-1})$) and that $ 0 \leq g_{k}(0')=g((x^{k})') \leq \tilde{t}$, thanks to \eqref{controllo-AA}. Therefore,  by Ascoli-Arzelà theorem (and a standard diagonal procedure) there exists a function $g_{\infty} \in UC(\R^{N-1})$ such that, up to a subsequence, $g_{k} \to g_{\infty}$ in $C^{0}_{\text{loc}}(\R^{N-1})$. \\
We also observe that, passing to the limit in \eqref{controllo-AA}, we obtain
\begin{equation}\label{controllo-AAinfinito}
\delta \leq g_{\infty}(0')+ \delta \leq x^{\infty}_N \leq \tilde{t}-\delta.
\end{equation}

For any $k \geq 1,$ let us consider $\Omega^{k}=\{(x',x_N) \in \R^N, \, x_N>g_{k}(x') \}$,  the epigraph of $g_k$, $\Omega^{\infty}=\lbrace (x',x_N)\in \R^N, \, x_N >g_{\infty}(x') \rbrace$, the epigraph of $g_{\infty}$ and define 
	\begin{equation}\label{ext-u}
				\tilde{u}(x)=\left\{
				\begin{array}{crl}
				u(x) & \text{if}& x \in \overline{\Omega},\\
				0 & \text{if} & x \in \R^{N} \setminus \overline{\Omega}.
				\end{array}
				\right.
	\end{equation}
Clearly, $\tilde{u} \in UC(\{x_{N}<R\})$ for every $R >0$, and so the sequence $(\tilde{u}_{k})$ defined by
\begin{equation}\label{trasl-tilde-u_k}
\begin{array}{ccc}
\tilde{u}_{k}(x)=\tilde{u}(x'+(x^{k})',x_N) \qquad \forall \, x=(x',x_N)  \in \R^{N}, \quad \forall k \geq 1,
\end{array} 
\end{equation}
 is uniformly equicontinuous on $\{x_{N}<R\},$ for any $ R>0$. We also notice that, for $k$ large enough,  
the point $ (0',-1)$ belongs to  $\R^{N} \setminus \overline{\Omega_k}$, since $g_{k} \to g_{\infty}$ in $C^{0}_{\text{loc}}(\R^{N-1})$ and $ g_k \geq 0$ on $ \R^{N-1}$. Hence, $\tilde{u}_{k}(0',-1) =0$, for $k$ large enough, and the sequence $(\tilde{u}_{k}(0',-1) )$ is bounded in $\R$. Therefore, using once again the Ascoli-Arzelà theorem, there exists 
$\tilde{u}_{\infty} \in C^0(\R^N)$
such that, up to a subsequence, $\tilde{u}_{k} \to \tilde{u}_{\infty}$ in $C^{0}_{\text{loc}}(\R^N)$ and  
			\begin{equation}\label{eq18}
				\left\{
				\begin{array}{ccl}
					- \Delta \tilde{u}_{\infty}=f(\tilde{u}_{\infty}) & \text{in} & \mathcal{D}'(\Omega^{\infty}),\\ 
					\tilde{u}_{\infty}\geq 0 & \text{in} &  \Omega^{\infty},\\ 
					\tilde{u}_{\infty}=0 & \text{on} & \partial \Omega^{\infty},
				\end{array}
				\right.
			\end{equation} 
where the boundary condition follows by observing that, 
\begin{equation*}
\forall \, k \geq 1, \quad \forall \, x' \in \R^{N-1} \qquad 0 = \tilde{u}_{k}(x',g_k(x')) , 
\end{equation*}
and thus
\begin{equation*}
0 = \tilde{u}_{k}(x',g_k(x')) \longrightarrow \tilde{u}_{\infty}(x',g_{\infty}(x')) \qquad  \text{as }k
\rightarrow \infty,
\end{equation*}
thanks to the uniform convergence of $(\tilde{u}_{k})$ and  $(g_k)$ on compact sets. 

By construction, $u_{\infty}:=\tilde{u}_{\infty_{\vert \Omega^{\infty}}} \in  C^{0}(\overline{\Omega^{\infty}})$. Furthermore, $u_{\infty}  \in  C^2(\Omega^{\infty})$ by \eqref{eq18} and standard interior regularity theory for elliptic equations, and it satisfies
\begin{equation}\label{eq-u-infinito}
				\left\{
				\begin{array}{cll}
					-\Delta u_{\infty}=f(u_{\infty}) & \text{in} & \Omega^{\infty},\\ 
					u_{\infty} \geq 0 & \text{in} & \Omega^{\infty},\\ 
					\tilde{u}_{\infty}=0 & \text{on} & \partial \Omega^{\infty}.
				\end{array}
				\right.
\end{equation}

			Since $u \leq u_{\tilde{t}}$ in $\Sigma_{\tilde{t}}^{g}$, then $\tilde{u}_{k} \leq \tilde{u}_{k, \tilde{t}}$ in 
			$\Sigma^{g_{k}}_{\tilde{t}}$, for any $k \geq 1$. Passing to the limit, we get 
			\begin{equation}\label{disug-infinito-1}
				\begin{array}{c@{\;}c@{\;}c@{\;}}
					u_{\infty} \leq u_{\infty, \tilde{t}} \quad  \text{in} \quad \overline{\Sigma^{g_{\infty}}_{\tilde{t}}}.
				\end{array}
			\end{equation}
			Also, 
			\begin{equation*}
				u_{k}(0',x^{k}_{N})=u(x^{k})>u_{\tilde{t}+\varepsilon_{k}}(x^{k})=u_{k, \tilde{t}+\varepsilon_{k}}(0',x^{k}_{N}),
			\end{equation*}
			so, taking the limit as $k \rightarrow +\infty,$ we have 
\begin{equation}\label{disug-infinito-2}
u_{\infty}(0',x^{\infty}_N) \geq  u_{\infty, \tilde{t}}(0',x^{\infty}_N).
\end{equation}
In view of \eqref{controllo-AAinfinito} we see that $(0',x^{\infty}_N)\in \overline{\Sigma^{g_{\infty}}_{\delta,\tilde{t}-\delta}} \subset \Sigma^{g_{\infty}}_{\tilde{t}}$ and so, by combining \eqref{disug-infinito-1} and \eqref{disug-infinito-2},  we obtain 
\begin{equation}\label{disug-infinito-3}
u_{\infty}(0',x^{\infty}_N)= u_{\infty, \tilde{t}}(0',x^{\infty}_N).				
\end{equation}

Next we use the assumption \eqref{cond-in-zero} to prove that $ u_{\infty} >0$ on $\Omega^{\infty}$.	We first observe that, by  \eqref{cond-in-zero}, we can find $\eta >0$ and $\varepsilon_{0}>0$ such that 
\begin{equation}\label{hyp_f}
	 f(t)\geq \eta t \quad \text{for any } t \in [0,\varepsilon_{0}],
\end{equation} 		
then, we choose $R$ large enough so that the first eigenvalue $ \lambda_1(R)$ of $-\Delta$ on the open ball $B(0,R) \subset \R^{N}$ (with homogeneous Dirichlet boundary condition) satisfies $ \lambda_1(R) < \eta$. 
		
Since $g_{k} \to g_{\infty}$ in $C^{0}_{\text{loc}}(\R^{N-1})$, there is $C(R) >0$ such that 
\begin{equation*}\label{bound-pour-sliding}
\forall k \geq 1  \qquad 0 \leq g, g_k \leq C(R)\qquad \text{on} \quad B'(0', 2R) \subset \R^{N-1}.
\end{equation*}
In particular, for $ T > 2(C(R) + R)$, the open ball $ \mathcal{B}:= B((0',T),R) \subset \R^{N}$ satisfies 
\begin{equation}\label{inclusion_B_RT}
\overline{\mathcal{B}} \subset \Omega, \qquad \overline{\mathcal{B}} \subset \Omega_{k} \quad \forall k \geq 1, \quad \text{and} \quad  \overline{\mathcal{B}} \subset \Omega_{\infty}
\end{equation}
and so, also
\begin{equation}\label{inclusion_B_RT-bis}
\overline{\mathcal{B}} + ((x_{k})',0) \subset \Omega \quad \forall k \geq 1. 
\end{equation}
 
Now, set $m_0 =\min_{\overline{\mathcal{B}}}u >0$ and denote by $\phi_1$ the positive first eigenfunction of $-\Delta$ on  $\mathcal{B}$ such that $\max_{\mathcal{B}} \phi_1 = 1$. Therefore, the first eigenfunction $ \phi := \min \left\lbrace \frac{m_0}{2}, \varepsilon_0 \right\rbrace \phi_1$ satisfies 
 \begin{equation*}
	\left\{
	\begin{array}{cll}
		0<\phi < u & \text{in} & \mathcal{B},\\
		\Delta \phi+f(\phi) \geq 0 & \text{in} & \mathcal{B},\\
		\phi=0 & \text{on} & \partial \mathcal{B},
	\end{array}
	\right.
\end{equation*}
where in  the latter we have used that \eqref{hyp_f} is in force. 

Now, since $\Omega$ is an epigraph and \eqref{inclusion_B_RT}-\eqref{inclusion_B_RT-bis} hold, we can use the sliding method (see for instance \cite{BCN3}) to get 
\begin{equation*}
	\phi(x'-(x_{k})',x_{N})< u(x) \qquad  \forall \,\,  x \in \mathcal{B} +((x_{k})',0) \subset \Omega, 
\end{equation*}
that is, 
\begin{equation*}
	\phi(x) < u_{k}(x) \quad  \forall \,\,  x \in \mathcal{B}. 
\end{equation*} 
By passing to the limit, we deduce that 
$$ 0 < \phi(x) \leq u_{\infty}(x) \quad  \forall \,\,  x \in \mathcal{B},$$
therefore, by \eqref{eq-u-infinito} and the strong maximum principle, we deduce that $u_{\infty} > 0$ on $\Omega^{\infty}$.

Now we are ready to complete the proof of  Proposition \ref{PropDS}. 

To this aim, we set 
$w_{\tilde{t}}:=u_{\infty, \tilde{t}}-u_{\infty}$ on $\Sigma^{g_{\infty}}_{\tilde{t}}$ and we claim that 
			\begin{equation}\label{compCNX}
				\begin{array}{lll}
					w_{\tilde{t}} \equiv 0 & \text{in the connected component of $\Sigma^{g_{\infty}}_{\tilde{t}}$ containing the point  $(0',x^{\infty}_N)$.}
				\end{array}
			\end{equation}			
Indeed, denote by $\mathcal{O}$ the connected component of $\Sigma^{g_{\infty}}_{\tilde{t}}$ containing the point  $(0',x^{\infty}_N)$ and let $B=B((0',x^{\infty}_N), R)$ be any open ball such that $\overline{B} \subset \mathcal{O}.$ Then
\begin{equation*}
				\left\{
				\begin{array}{cll}
					-\Delta w_{\tilde{t}} = f(u_{\infty, \tilde{t}})  - f(u_{\infty}) \geq- L_f w_{\tilde{t}} & \text{in} & B,\\
					w_{\tilde{t}}\geq 0 & \text{in} & B,\\
					w_{\tilde{t}}(0',x_{\infty})=0,
				\end{array}
				\right.
			\end{equation*}
where $L_f$ is the Lipschitz constant of $f$ on the compact set $[0, \max_{\overline{B}} \, u_{\infty, \tilde{t}}]$ (here we have also used \eqref{disug-infinito-1}). \\
Since $(0',x^{\infty}_N)\in B$ and \eqref{disug-infinito-3} holds, the strong maximum principle ensures that $ w_{\tilde{t}} \equiv 0$ in $B$ and thus, a standard connectedness argument implies  \eqref{compCNX}. 

Since $\Omega^{\infty}$ is an epigraph, the continuity of  $u_{\infty}$ on $ \overline{\Omega^{\infty}}$ and \eqref{compCNX} imply that $u_{\infty}$ must vanish at some point $ \overline{x} \in \Omega^{\infty}$. This contradicts $u_{\infty}>0$ on $\Omega^{\infty}$ and so, the proof of  Proposition \ref{PropDS} is complete. 
		
\medskip

To conclude the proof of step 2, let us pick $\delta>0$ such that $ 3\delta < \min(\frac{\tilde{t}}{2},\varepsilon(L_f,\gamma))$. By Proposition \ref{PropDS}, there exists $\varepsilon(\delta) \in (0,\delta)$ such that, for any $\varepsilon \in (0,\varepsilon(\delta))$ we have 
$u\leq u_{\tilde{t}+\varepsilon}$ in $\overline{\Sigma_{\delta,\tilde{t}-\delta}^{g}}$.\\
		On $\Sigma^{g}_{\tilde{t}+\varepsilon} \backslash \overline{\Sigma_{\delta,\tilde{t}-\delta}^{g}}$, we have $u,u_{\tilde{t}+\varepsilon} \in H^{1}_{\text{loc}}(\overline{\Sigma^{g}_{\tilde{t}+\varepsilon} \backslash \overline{\Sigma_{\delta,\tilde{t}-\delta}^{g}}})\cap UC(\overline{\Sigma^{g}_{\tilde{t}+\varepsilon} \backslash \overline{\Sigma_{\delta,\tilde{t}-\delta}^{g}}})$ and
				
		\begin{equation*}
			\left\{
			\begin{array}{cll}
				-\Delta u-f(u)=0=-\Delta u_{\tilde{t}+\varepsilon}-f(u_{\tilde{t}+\varepsilon}) & \text{on} & 
				\Sigma^{g}_{\tilde{t}+\varepsilon} \backslash \overline{\Sigma_{\delta,\tilde{t}-\delta}^{g}} \\
				u \leq u_{\tilde{t}+\varepsilon} & \text{on} & \partial \Big( \Sigma^{g}_{\tilde{t}+\varepsilon} 
				\backslash \overline{\Sigma_{\delta,\tilde{t}-\delta}^{g}} \Big)
			\end{array}
			\right.
		\end{equation*}
For the latter, notice that 
$$\partial \Big(\Sigma^{g}_{\tilde{t}+\varepsilon} \backslash \overline{\Sigma_{\delta,\tilde{t}-\delta}^{g}}\Big)=(\{x_N=\tilde{t}+\varepsilon\}\cap \overline\Omega) \bigcup (\partial \Omega \cap \{x_N <\tilde{t}+\varepsilon\}) \bigcup \partial(\overline{\Sigma_{\delta,\tilde{t}-\delta}^{g}}),
$$ 
and so
\begin{itemize}
		\item if $x \in \{x_N=\tilde{t}+\varepsilon\} \cap \overline\Omega$ then $u(x)=u_{\tilde{t}+\varepsilon}(x)$,
		\item if $x \in \partial \Omega \cap \{x_N <\tilde{t}+\varepsilon\}$ then $0=u(x)<u_{\tilde{t}+\varepsilon}(x)$,
		\item if $\partial (\overline{\Sigma_{\delta,\tilde{t}-\delta}^{g}}),$ then Proposition \ref{PropDS} implies that $u(x) \leq u_{\tilde{t}+\varepsilon}(x)$.
		\end{itemize}

Notice that $\mathtt{S}_{e_{N}}(\Sigma^{g}_{\tilde{t}+\varepsilon} \backslash 
\overline{\Sigma_{\delta,\tilde{t}-\delta}^{g}}) \leq 2 \delta+\varepsilon < 3 \delta <\varepsilon(L_f,\gamma)$ and that 
$\Sigma^{g}_{\tilde{t}+\varepsilon} \backslash \overline{\Sigma_{\delta,\tilde{t}-\delta}^{g}} \subseteq 
\{0<x_N< \tilde{t}+\varepsilon\}$ (see Figure \ref{fig_etape_2}). Hence, since $u$ is uniformly continuous, we can apply Theorem \ref{thComp2}, as in Step 1, to get
$$
u \leq u_{\tilde{t}+\varepsilon} \quad \text{in} \quad \Sigma^{g}_{\tilde{t}+\varepsilon} \backslash \overline{\Sigma_{\delta,\tilde{t}-\delta}^{g}}. 
$$

\begin{figure}[h]
	\centering
	\includegraphics[width=0.6\textwidth]{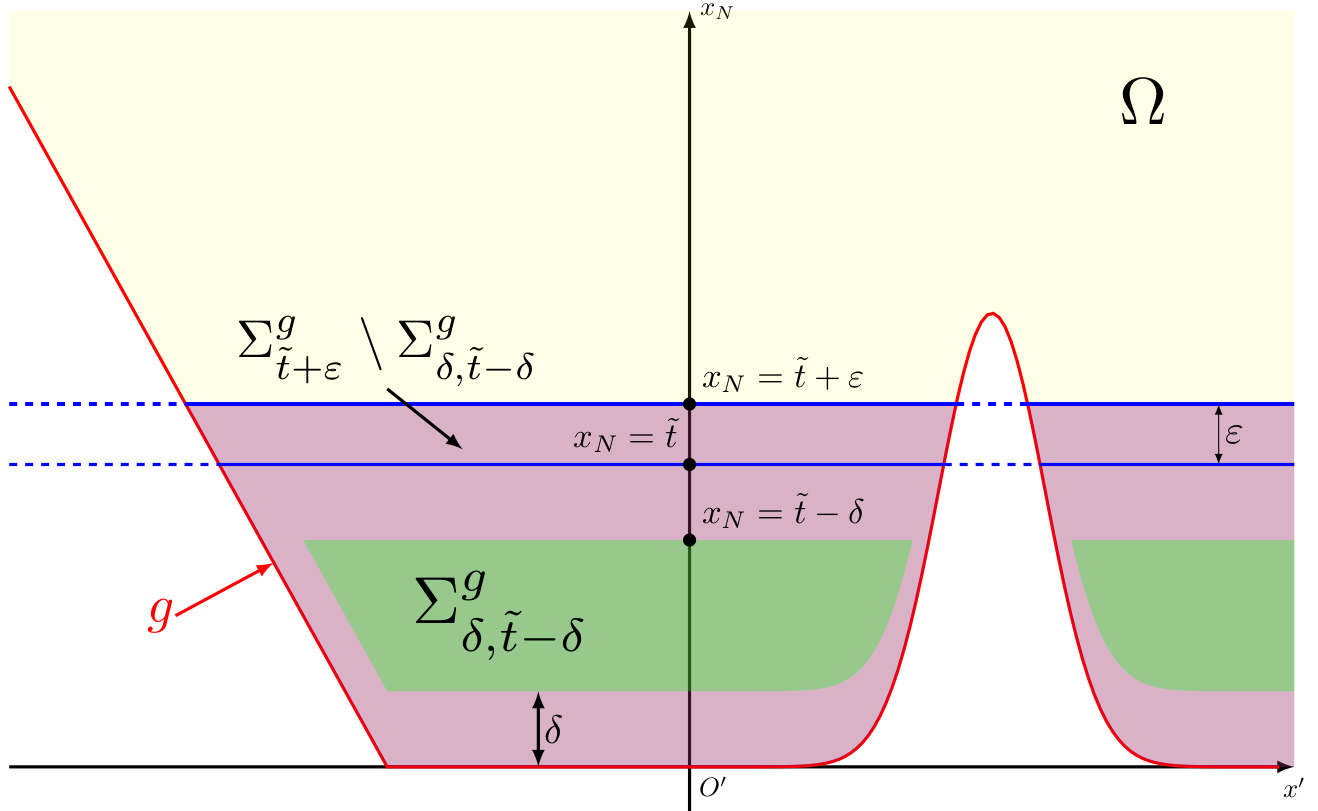}
	\caption{Proof of Step 2}
	\label{fig_etape_2}
\end{figure}	

The latter and Proposition \ref{PropDS} imply that $u \leq u_{\tilde{t}+\varepsilon}$ in 
$\Sigma^{g}_{\tilde{t}+\varepsilon}$. This contradicts the definition of $\tilde{t}$, thus $\tilde{t}=+\infty$.

\medskip

\textit{Step 3 : End of the proof.} 

For each $t>0$, let $(x',t) \in \Omega \cap \{x_{N}=t\}$ and pick  $r>0$ such that $\overline{B((x',t),r)} \subset \Omega$. 
From the previous step we infer that
\begin{equation*}
	\left\{
	\begin{array}{cll}
		-\Delta(u_{t}-u)+L_{f,t}(u_{t}-u) \geq 0 & \text{in} & \Sigma^{g}_{t}\cap B((x',t),r),\\
		u_{t}-u \geq 0 & \text{in} & \Sigma^{g}_{t}\cap B((x',t),r),\\
		u_{t}-u=0 & \text{on} & \{x_N=t\}\cap B((x',t),r), 
	\end{array}
	\right.
\end{equation*} 
where $L_{f,t}$ is the Lipschitz constant of $f$ in the compact set $ \left[ 0,\displaystyle\max_{\overline{\Sigma^{g}_{t}\cap B((x',t),r)}}u_{t} \right]$. 

So, by the Hopf lemma, we have 
\begin{equation}
	\begin{array}{ccc}
	-2 \frac{\partial u}{\partial x_N}(x',t)= \frac{\partial (u_{t}-u)}{\partial x_N}(x',t)<0.
	\end{array}\label{monotonie}
\end{equation}
The latter proves the desired conclusion. \qed

\bigskip

\textit{Proof of Theorem \ref{TH2}.}   
We follow the proof of Theorem \ref{TH1}.  However, this strategy requires significant changes in Step 2 
(recall that Step 2 in the proof of Theorem \ref{TH1} was crucially based on the assumption \eqref{cond-in-zero}, as well as on the uniform continuity of $u$ on $ \Omega\cap \{x_{N}<R\}$ for any $R>0$).

\medskip

\textit{Step 1 : $\Lambda$ is not empty.} 

It is enough to observe that Step 1 in the proof of Theorem \ref{TH1} holds true irrespectively of the value 
of $f(0)$ and that we can apply Theorem \ref{thComp2} (see also Remark \ref{rem-striscia-exp}), since $u$ has at most exponential growth on finite strips by assumption.

\medskip		
		
\textit{Step 2 : $\tilde{t}=\sup \Lambda = +\infty$. } 

To prove the claim, it is enough  to show that Proposition \ref{PropDS} is still true under the (more general) assumptions of Theorem \ref{TH2}. Unfortunately, the method used in the proof of Theorem \ref{TH1} no longer works if the assumption \eqref{cond-in-zero} is not in force. Indeed, in this case we cannot exclude that the limit profile $u_{\infty}$ coincides with the function identically equal to $0$, and this prevents us from proceeding as before. Also, the fact that $u$ is no longer uniformly continuous on finite strips adds new difficulties. 
To circumvent those problems we use a different strategy based on translation and scaling arguments. 

\smallskip

{\textit {Proof of Proposition \ref{PropDS} under the assumptions of Theorem \ref{TH2}.}} 
We proceed as in the proof of Proposition \ref{PropDS} (in Theorem \ref{TH1}) until formula \eqref{controllo-AAinfinito} and, for any 
$k \geq 1,$ we consider again $\Omega^{k}=\{(x',x_N) \in \R^N, \, x_N>g_{k}(x') \}$ and 
$\Omega^{\infty}=\lbrace (x',x_N)\in \R^N, \, x_N >g_{\infty}(x') \rbrace$.

Since $g_{\infty}(0')<\tilde{t} - 2 \delta,$ by the continuity of $g_{\infty}$ and $\eqref{controllo-AAinfinito},$ we can choose $R>0$ small enough such that
	 \begin{equation}\label{eq22sup}
	 	\disp\sup_{x' \in \overline{B'(0',4R)}}g_{\infty}(x')< \tilde{t} - \delta,
	 \end{equation}
where $B'(0',4R) \subset \R^{N-1}.$ 
Since $g_{k} \to g_{\infty}$ in $C^{0}_{\text{loc}}(\R^{N-1}) $, there exists $k_{1}=k_{1}(R) \geq 1$ such that 
\begin{equation}\label{eq22bis}
\forall k \geq k_{1}, \quad \forall x' \in \overline{B'(0',4R)}, \qquad g_{\infty}(x')-\dfrac{\delta}{4} \leq g_{k}(x') \leq  g_{\infty}(x')+\dfrac{\delta}{4}.
\end{equation}
Moreover, in view of  \eqref{controllo-AA},\eqref{controllo-AAinfinito} and \eqref{eq22bis} there exists $k_{2}=k_{2}(R) \geq k_1$ such that for any $k \geq k_{2}$
 	\begin{equation}\label{eq222}
 		(0',x^{k}_{N}) \in \left\lbrace x=(x',x_{N})\in \R^{N} \,  : \,  x' \in \overline{B'(0',2R)}, \quad g_{\infty}(x') + \frac{\delta}{2} \leq x_{N} \right\rbrace .
 	\end{equation}
 	 
Pick $T>2\tilde{t}+2R$ and define the compact set 
 	\begin{equation*}
 		\mathcal{C}= \left( \overline{B'(0',2R)}\times \R \right) \cap \overline{\Sigma^{g_{\infty}+\frac{\delta}{2}}_{2T}},
 	\end{equation*}
and the bounded open set 
 	\begin{equation*}
 		\mathcal{O}= \left( B'(0',4R) \times \R \right) \cap  \left\lbrace x=(x',x_{N})\in \R^N \, : \, g_{\infty}(x')  + \frac{\delta}{4} < x_N < 2T +1 \right\rbrace,
 	\end{equation*}
then, by  \eqref{eq22bis} and \eqref{eq222}, we have (see Figure \ref{fig3}) 
 	\begin{equation}\label{eq22tris}
\forall k \geq k_2, \qquad (0',x^{k}_{N})\in \mathcal{C} \subset \subset \mathcal{O} \subset \Omega^{k}.
 	\end{equation}
 
\begin{figure}[h]
	\centering
	\includegraphics[width=0.6\textwidth]{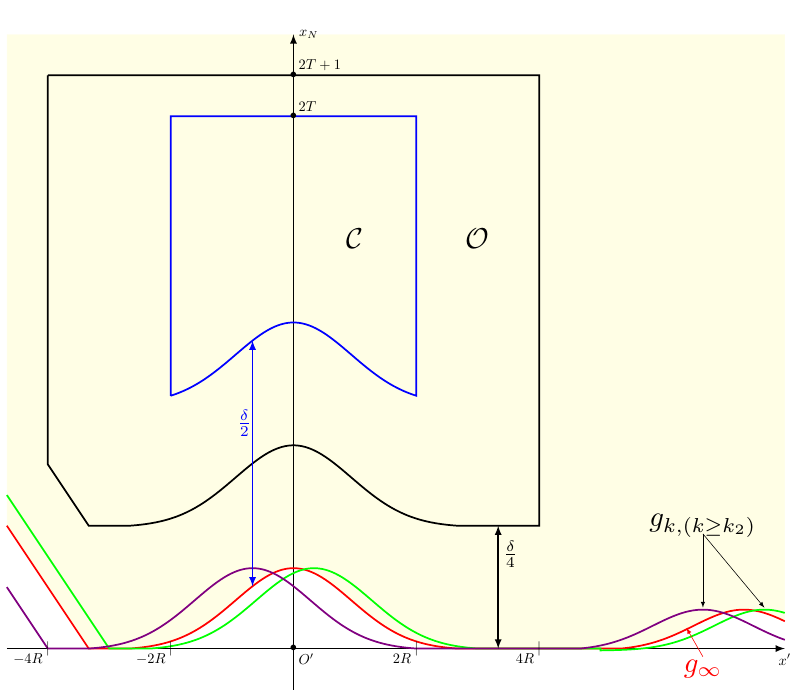}
	\caption{$\mathcal{C} \subset \subset \mathcal{O} \subset \Omega^{k}$}
	\label{fig3}
\end{figure}

Now, for any $ k \geq k_2$ and any $ x \in \Omega^{k}$ we define
	 \begin{equation}\label{eq19}
	 v_{k}(x)=\dfrac{u((x^{k})'+x',x_N)}{u(x^{k})}=\dfrac{u((x^{k})'+x',x_N)}{\alpha_{k}},
	 \end{equation}
	where $\alpha_{k}=u(x^{k})>0$, and 
\begin{equation}\label{def-wk-eq19bis}
	 w_{k}(x)= v_k(x) + {\frac{f(0)}{2 \alpha_k}} \left(x_N-x_N^k\right)^2.
	 \end{equation}

	Then, for any $ x \in \mathcal{O} $ we have 
\begin{align}\label{eq21}
		-\Delta w_{k}(x) & = -\Delta v_{k}(x) - {\frac{f(0)}{\alpha_k}}  = \dfrac{f(u((x^{k})'+x',x_N)) - f(0)}{\alpha_{k}} \\
		& =\dfrac{f(u((x^{k})'+x',x_N)) - f(0)}{\alpha_{k} w_k(x)} w_k(x) :=c_{k}(x) w_{k}(x),
	\end{align}
	where $c_{k}(x) = \dfrac{f(u((x^{k})'+x',x_N)) -f(0)}{\alpha_{k} w_k(x)}$ is a bounded continuous function satisfying 
	\begin{equation}\label{eq21bis}
	\Vert c_k \Vert_{L^\infty( \mathcal{O})} \leq L_f,\qquad \forall \, k \geq k_2.
	\end{equation}	
Indeed, for any $ k \geq k_2$ and any $ x \in \mathcal{O}$, 
     \begin{equation}\label{eq21tris}
	\vert c_k(x) \vert  \leq \dfrac{\vert f(u((x^{k})'+x',x_N)) - f(0) \vert }{\alpha_{k} w_k(x)} \leq L_f  \dfrac{u((x^{k})'+x',x_N)}{\alpha_{k} w_k(x)} = L_f  \dfrac{v_k(x)}{w_k(x)} \leq L_f, 
	\end{equation}		
since $f$ is globally Lipschitz-continuous and $v_k \leq w_k$ in $ \Omega^k$, in view of $f(0) \geq 0$. 

We also note that 
	\begin{equation}\label{eq22}
	w_{k}(0',x^{k}_{N}) = 	v_{k}(0',x^{k}_{N})=\dfrac{u(x^{k})}{u(x^{k})}=1, \qquad \forall \, k \geq k_2.
	\end{equation}

Therefore, for any $ k \geq k_2$, the function $w_k$ satisfies 
\begin{equation}\label{eq21Harnack}
		  \left\{
			\begin{array}{cll}
				-\Delta w_{k} = c_{k} w_{k} & \text{in} & \mathcal{O},\\
				w_k > 0 & \text{in} & \mathcal{O},\\
				\vert c_k \vert \leq L_f & \text{in} &\mathcal{O},
			\end{array}
			\right.
\end{equation}  
and so we can apply Harnack inequality on the compact set $\mathcal{C}$ to obtain 
 	\begin{equation}\label{eq21Harnackbis}
\forall k \geq k_2, \qquad  \sup_{\mathcal{C}}w_{k} \leq C_{7}  \inf_{\mathcal{C}} w_{k},
 	\end{equation}
for some  constant $C_{7}=C_{7}(N, L_f, \mathcal{C},\mathcal{O})>0$.

By combining \eqref{eq21Harnackbis}, \eqref{eq22} and \eqref{eq22tris} we get that 
 	\begin{equation}\label{eq23}
\forall k \geq k_2, \qquad \sup_{\mathcal{C}}v_{k} \leq \sup_{\mathcal{C}}w_{k}\leq C_{7} \inf_{\mathcal{C}}w_{k}\leq C_{7} w_{k}(0',x^{k}_{N}) \leq C_7.
 	\end{equation}
From the latter we also deduce that 
     \begin{equation}\label{eq23bis}
\forall k \geq k_2, \qquad  0 \leq {\frac{f(0)}{\alpha_k}} \leq \frac{2C_7}{\delta^2}, 
 	\end{equation} 
indeed, since the point $(0',x^{k}_{N} + \delta) \in \mathcal{C}$, from \eqref{eq23} we have 
    \begin{equation}\label{eq23tris}
 0 \leq {\frac{f(0) \delta^2}{2 \alpha_k}}  \leq w_k((0',x^{k}_{N} + \delta)) \leq C_7.
 	\end{equation}

Furthermore, for $k \geq 1$,  if $x \in \Sigma^{g_{k}}_{\tilde{t}} \cap (\overline{B'(0',2R)} \times \R)$ then 
$(x'+(x^{k})',x_N) \in \Sigma^{g}_{\tilde{t}}$, therefore by definition of $\Lambda$ we get 
    \begin{equation*}
    	\dfrac{\dr u}{\dr x_N}(x'  +  (x^{k})',x_N) > 0, 
    \end{equation*}	
and so, from  \eqref{eq19} we deduce that 
    \begin{equation}\label{eq24}
 \forall k \geq 1, \quad \forall x \in \Sigma^{g_{k}}_{\tilde{t}} \cap (\overline{B'(0',2R)} \times \R), \qquad  \dfrac{\dr v_{k}}
 {\dr x_N}(x)=\dfrac{1}{\alpha_{k}}\dfrac{\dr u}{\dr x_N}(x'  +  (x^{k})',x_N) > 0. 
    \end{equation}	
The latter combined with \eqref{eq23} immediately leads to the following uniform bound 
	\begin{equation}\label{eq24unifbound}
		\begin{array}{ccc}
\forall k \geq k_2, \quad \forall x \in  \overline{\Sigma_{2T}^{g_{k}} \cap (B'(0',2R) \times \R)}, \qquad 
v_{k}(x) \leq C_{7}. \qquad \qquad \qquad \qquad 
		\end{array}
	\end{equation}
	
To proceed further we observe that $ \Sigma_{2T}^{g_{k}} \cap (B'(0',2R) \times \R) = \mathfrak{C}^{g_{k}}(0',2R,2T)$ and that, for any $ k \geq k_2$,
\begin{equation}\label{eq25}
		  \left\{
			\begin{array}{cll}
			-\Delta v_{k} = \dfrac{f(\alpha_{k} v_k)}{\alpha_{k}}=f_{k}(v_{k}) & \text{in} &\mathfrak{C}^{g_{k}}(0',2R,2T),\\
			v_k=0 & \text{on} & \partial \Omega^k \cap (B'(0',R/2)\times\R), 
			\end{array}
			\right.
		\end{equation}  	
where,  for any $ t \geq 0$, we have set $f_{k}(t)=\dfrac{f(\alpha_{k} t)}{\alpha_{k}}$. Notice that $f_{k} \in {Lip}([0,+\infty))$ with 
\begin{equation}\label{eq25bis}
L_{f_{k}} \leq L_{f}, \qquad 0 \leq f_k(0) = {\frac{f(0)}{\alpha_k}} \leq \frac{2C_7}{\delta^2}, \qquad \forall k \geq k_2.
\end{equation}

By definition of $g_k$, all the epigraphs $\Omega^k$  satisfy a uniform exterior cone condition on $\partial \Omega^k$ with the same reference cone $V$ (the reference cone for the epigraph $ \Omega$), therefore we can apply Proposition \ref{prop2.2} with $ h=g_k, \omega=\Omega^{k}, u = v_{k}, \widetilde{R}=2R, \widetilde{\tau}=2T$, $r=\frac{\widetilde{R}}{4}=\frac{R}{2}, t=\frac{\widetilde{\tau}}{2}=T$ and $k \geq k_2,$ to get 
\begin{equation}\label{estimée_holder_vk-1}
\exists \, \alpha=\alpha(N,V) \in (0,1) \quad : \quad \forall k \geq k_2 \qquad v_{k} \in C^{0,\alpha} (\overline{\mathfrak{C}^{g_{k}} (0',R/2,T)})
\end{equation}
and
\begin{align}\label{estimée_holder_vk-2}
	\|v_{k}\|_{C^{0,\alpha}(\overline{\mathfrak{C}^{g_{k}}(0',R/2,T)})} & \leq C_5(R,T,V,N)  
	\left( L_{f_k} + \vert f_k(0) \vert +1  \right) \left( \|v_k\|_{L^\infty (\mathfrak{C}^{g_k}(0',2R, 2T))} +1 \right) \\
	& \leq C_5(R,T,V,N) \left( L_f + \frac{2C_7}{\delta^2} +1  \right)  \left( C_7+1 \right) = C_8,
\end{align}
for some  constant $C_8=C_8(R,T,V,N, L_f,\delta) >0.$

Now we consider the compact set $ {\mathcal{K}} = \overline{B'(0',R/2) \times (-T,T) }$ 
(see Figure \ref{fig4}) and, for any $ k \geq k_2$, we set 
\begin{equation*}
	\widetilde{v_{k}}(x)=\left\{
	\begin{array}{crl}
		v_{k}(x) & \text{if}& x \in \overline{\mathfrak{C}^{g_{k}}(0',R/2,T)},\\
		0 & \text{if} & x \in {\mathcal{K}}  \setminus \overline{\mathfrak{C}^{g_{k}}(0',R/2,T)},
	\end{array}
	\right.
\end{equation*}
then, by \eqref{estimée_holder_vk-1} and \eqref{estimée_holder_vk-2}, $\widetilde{v_{k}} \in 
C^{0,\alpha}({\mathcal{K}})$ and
\begin{equation}\label{borne_de_vk_sur_tout_le_cylindre} 
\|\widetilde{v_{k}} \|_{C^{0,\alpha}({\mathcal{K}})} \leq C_8,
\end{equation}
and so, by Ascoli-Arzelà theorem, 
\begin{equation}\label{limite_de_vktilde}
\exists \, v_{\infty} \in C^{0,\alpha}({\mathcal{K}}) \quad : \quad 
\widetilde{v_{k}} \to v_{\infty} \quad \text{in} \quad  C^{0}({\mathcal{K}}),
\end{equation}
along a subsequence.

\begin{figure}[!h]
	\centering
	\includegraphics[width=0.6\textwidth]{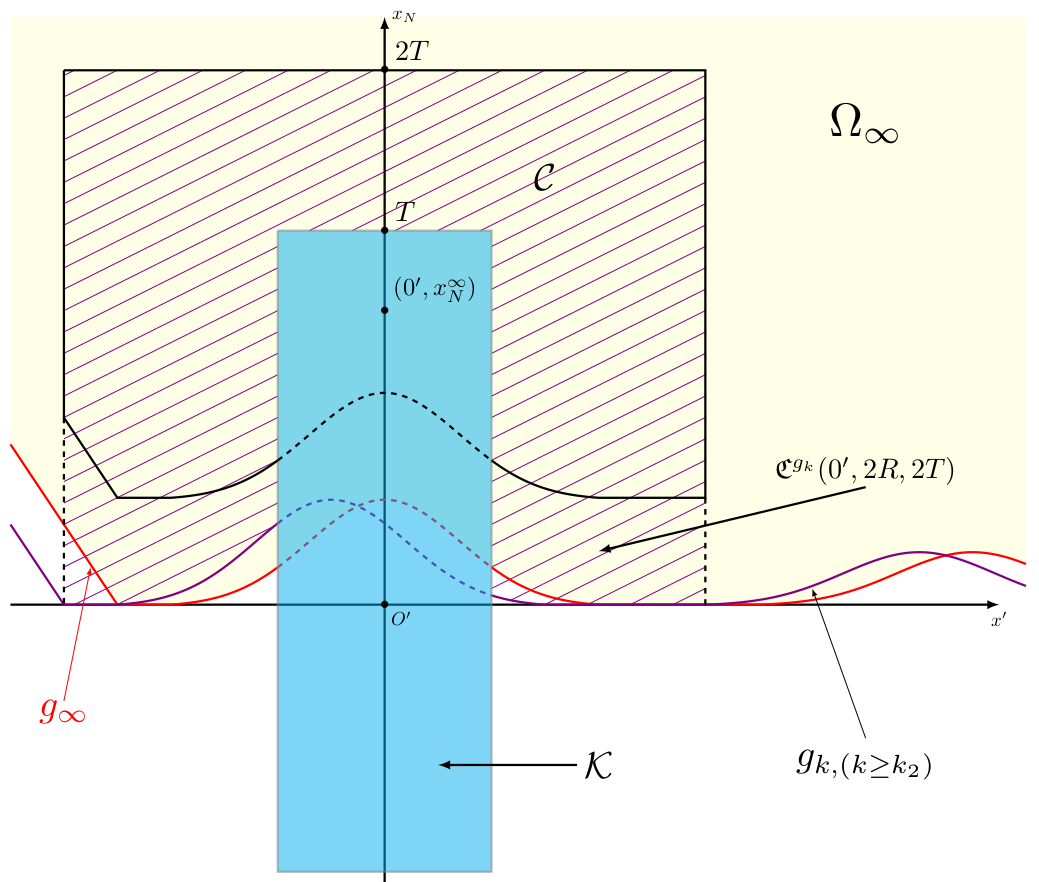}
	\caption{$ {\mathcal{K}} = \overline{B'(0',R/2) \times (-T,T) } $}
	\label{fig4}
\end{figure}

In view of \eqref{eq25}, for any $k \geq k_2$, we have that 
\begin{equation}\label{eqvktilde}
		  \left\{
			\begin{array}{cll}
			-\Delta \widetilde{v_{k}} = f_{k}(\widetilde{v_{k}} ) & \text{in} &\mathfrak{C}^{g_{k}}(0',R/2,T) \subset {\mathcal{K}},\\
			\widetilde{v_{k}} > 0 & \text{in} &\mathfrak{C}^{g_{k}}(0',R/2,T),\\
			\widetilde{v_{k}} =0 & \text{on} & \partial \Omega^k \cap (B'(0',R/2)\times\R), \\
			\widetilde{v_{k}}(0',x^{k}_{N})= v_{k}(0',x^{k}_{N})=1.
			\end{array}
			\right.
\end{equation} 
Since $f_{k} \in {Lip}([0,+\infty))$ and  \eqref{eq25bis} is in force, another application of Ascoli-Arzelà theorem tell us that, up to a subsequence, 
\begin{equation}\label{limite_de_fk}
	f_{k} \to f_{\infty} \quad \text{in} \quad  C^{0}_{\text{loc}}([0,+\infty)), 
\end{equation}
for some $f_{\infty} \in {Lip}([0,+\infty))$ satisfying $f_{\infty}(0) \geq 0$.

Now we recall that, $x^{k}_{N} \longrightarrow x^{\infty}_N  \in \left[g_{\infty}(0')+ \delta, \tilde{t}-\delta \right],$
$g_k \longrightarrow g_{\infty}$ in $ C^{0}_{\text{loc}}(\R^{N-1})$ and that, \eqref{limite_de_vktilde} and \eqref{limite_de_fk} are in force, therefore we can pass  
to the limit, as $k \longrightarrow + \infty$, into \eqref{eqvktilde} to get 
\begin{equation}\label{equation_v_infini}
	\left\{
	\begin{array}{cll}
		- \Delta v_{\infty}=f_{\infty}(v_{\infty}) & \text{in}& {\mathcal{D}}'(\mathfrak{C}^{g_{\infty}}(0',R/2,T)),\\
		v_{\infty} \geq 0 & \text{in} &\mathfrak{C}^{g_{\infty}}(0',R/2,T),\\
		v_{\infty} = 0  & \text{on} & \partial \Omega^{\infty} \cap (B'(0',R/2)\times\R), \\
		(0',x^{\infty}_N) \in \mathfrak{C}^{g_{\infty}}(0',R/2,T), & & v_{\infty}(0',x^{\infty}_N)=1.
	\end{array}
	\right.
\end{equation}
By standard interior regularity theory for elliptic equations we see that $v_{\infty} $ is a classical solution to \eqref{equation_v_infini} and the strong maximum principle then implies 
\begin{equation}\label{equation_v_infini-positive}
v_{\infty} >0 \qquad \text{in}  \qquad \mathfrak{C}^{g_{\infty}}(0',R/2,T).
\end{equation}
By letting $k \longrightarrow + \infty$ into \eqref{eq24}, and recalling the definition of $\widetilde{v_{k}}$, we deduce that 
\begin{equation}\label{inégalitéelimite}
	v_{\infty}(x) \leq v_{\infty,\tilde{t}}(x) \qquad \forall x \in \overline{\Sigma^{g_{\infty}}_{\tilde{t}} \cap (B'(0',R/2) \times \R)}.
\end{equation}

We notice that \eqref{contrad-enunciatoDS} implies $u_{k}(0',x^{k}_{N}) > u_{k,\tilde{t}+\varepsilon_{k}}(0',x^{k}_{N}),$ for any $ k \geq 1$. Then, by definition of $v_k$ and $\widetilde{v_{k}}$, we deduce that $\widetilde{v_{k}} (0',x^{k}_{N}) 
> {\widetilde{v_k,}_{\tilde{t}+\varepsilon_{k}}}(0',x^{k}_N)$, for any $ k \geq 1$.  Passing to the limit in the latter we deduce 
\begin{equation*}
v_{\infty}(0',x^{\infty}_N)\geq v_{\infty,\tilde{t}}(0',x^{\infty}_N), 
 \end{equation*}
and so 
\begin{equation*}
 	v_{\infty}(0',x^{\infty}_N)=v_{\infty,\tilde{t}}(0',x^{\infty}_N)
 \end{equation*}
since $(0',x^{\infty}_N) \in \Sigma^{g_{\infty}}_{\tilde{t}} \cap (B'(0',R/2) \times \R)$ and 
\eqref{inégalitéelimite} holds true.

Now, for any $x \in \Sigma^{g_{\infty}}_{\tilde{t}} \cap (B'(0',R/2) \times \R)$ we set
\begin{equation*}
	w_{\infty}(x)=v_{\infty,\tilde{t}}(x)-v_{\infty}(x),
\end{equation*}
then $w_{\infty}$ satisfy 
\begin{equation}\label{inégalitéelimite-fine}
	\left\{
	\begin{array}{cll}
		- \Delta w_{\infty}=f_{\infty}(v_{\infty,\tilde{t}})-f_{\infty}(v_{\infty}) \geq -L_{f}w_{\infty} & \text{in}& 
		\Sigma^{g_{\infty}}_{\tilde{t}} \cap (B'(0',R/2) \times \R), \\
		w_{\infty} \geq 0  & \text{in} & \Sigma^{g_{\infty}}_{\tilde{t}} \cap (B'(0',R/2) \times \R),\\
		(0',x^{\infty}_N) \in \Sigma^{g_{\infty}}_{\tilde{t}} \cap (B'(0',R/2) \times \R) , & & w_{\infty}(0',x^{\infty}_N)=0.
	\end{array}
	\right.
\end{equation}
By applying the strong maximum principle to \eqref{inégalitéelimite-fine}, we infer that $w_{\infty} \equiv 0$ on the domain 
$\Sigma^{g_{\infty}}_{\tilde{t}} \cap (B'(0',R/2) \times \R)$. Therefore, we have
\begin{equation*}
	0=v_{\infty}(x',g_{\infty}(x'))=v_{\infty}(x',2\tilde{t}-g_{\infty}(x')),\quad \forall \, x' \in B'(0',R/2),
\end{equation*}
which contradicts  \eqref{equation_v_infini-positive}, since $0 < 2\tilde{t}-g_{\infty}(x') < T$ whenever $x' \in B'(0',R/2)$.
This completes the proof of  Proposition \ref{PropDS} when $f(0)=0$. 

The remaining part of the proof of Step 2  is the same of the one of Step 2 in Theorem \ref{TH1}, just observe that one can use Theorem \ref{thComp2} (see also Remark \ref{rem-striscia-exp}), since $u$ has at most exponential growth on finite strips by assumption.

Step 3 is unchanged. \qed

\bigskip

\textit{Proof of Theorem \ref{TH2bis}.}   
The proof of Theorem \ref{TH2bis} is a straightforward  adaptation of the one of Theorem \ref{TH2}. Indeed, any step of the proof of Theorem \ref{TH2} remains unchanged if one observes that 

i) one can apply Theorem \ref{thComp1}, since by assumption $u$ is bounded on any finite strip and $f$ is a locally-Lipschitz continuous function, 

ii) Step 2 only requires  that $u$ is bounded on any finite strip and that $f$ belongs to $ {Lip}_{loc}([0,+\infty))$ and satisfies $f(0)\geq0$.  Indeed, those properties imply the validity of \eqref{eq21bis} and \eqref{eq21tris}, where $L_f$ is replaced by the Lipschitz constant of $f$ on the interval $\left[0, M \right]$. Here, we have set  
$M := \sup_{\Omega \cap \left \lbrace x_N \, < \, 2T+1 \right\rbrace } u $, which is finite by assumption. 
		
Step 3 is unchanged, since it only requires that $f$ belongs to $ {Lip}_{loc}([0,+\infty)).$  \qed			
	
\medskip

\textit{Proof of Corollary \ref{Cor1}.}   

Since $ \nabla u$ is bounded on $ \Omega$, $u \in C^0(\overline{\Omega})$ and $ u=0$ on $ \partial \Omega$, it is straightforward to check that $u$ satisfies $ \vert u(x) - u(y) \vert \leq  \Vert \nabla u  \Vert_{\infty} \vert x - y \vert $ for any $ x,y \in \Omega$. Then,  $u$ is uniformly continuous on $\Omega$ and $u \in W^{1,\infty}_{loc}(\overline{\Omega}) \subseteq H^{1}_{loc}(\overline{\Omega}).$ The  conclusion then follows by applying Theorem  \ref{TH1}. \qed	

\medskip

\textit{Proof of Corollary \ref{Cor2}.}   

$u \in W^{1,\infty}_{loc}(\overline{\Omega}) \subseteq H^{1}_{loc}(\overline{\Omega}),$ since $u \in C^0(\overline{\Omega}),$ $u=0$ on $ \partial \Omega$ and $ \nabla u$ is bounded in $ \Omega$ by assumption. The latter assumption also implies (via the mean value theorem) that $u$ is bounded on  finite strips.
The conclusion then follows by applying Theorem  \ref{TH2bis}. \qed

\medskip

\textit{Proof of Corollary \ref{Cor3}.}  

Since $\partial \Omega$ is locally of class $C^{1, \alpha}$ and $u \in C^2(\Omega) \cap C^0(\overline{\Omega})$, standard regularity results for elliptic equations implies that $u \in C^{1,\alpha}_{loc}(\overline{\Omega}).$ Therefore $ u  \in H^{1}_{loc}(\overline{\Omega})$ and the  conclusion then follows from Theorem  \ref{TH2bis}. \qed

\medskip

\textit{Proof of Theorem \ref{TH2coerc}.}  

The proof is similar to the one of Theorem \ref{TH1}, but it is easier, since the coercivity assumption on $g$ implies that any cap $\Sigma_{\theta}^{g} $ is a bounded open set of $\R^N$. 
Thus, we provide only the modifications necessary to deal with the new points of the proof related to the fact that $g$ is supposed to be merely continuous and that no restriction on the sign of $f(0)$ is imposed. \\ 
Since every cap $\Sigma_{\theta}^{g} $ is bounded, the comparison principle immediately gives the conclusion of Step 1. 

To achieve the conclusion of Step 2 we need to show that Proposition \ref{PropDS} still holds under the  assumptions of Theorem \ref{TH2coerc}. \\ 
To this end we first observe that the sequence of points $ (x^k)$ appearing in \eqref{contrad-enunciatoDS} is bounded (since the cap $\Sigma_{\tilde{t}- \delta}^{g} $ is bounded) and so, up to a subsequence, we may and do suppose that $x^k \to x^{\infty} := ((x^{\infty})^{'}, x^{\infty}_N)$, as $k \to \infty$.  Therefore, the sequence of continuous functions $(g_k)$ defined by \eqref{trasl-g_k} is again relatively compact in $ C^0_{loc}(\R^{N-1})$. Hence, up to a subsequence, $g_{k} \to g_{\infty}$  in $C^{0}_{\text{loc}}(\R^{N-1})$. Note that $g_{\infty}(x') = g(x' + (x^{\infty})^{'})$ for any $x' \in \R^{N-1}$. 

Moreover, the sequence $(\tilde{u}_{k})$ defined by \eqref{trasl-tilde-u_k} is again relatively compact in $C^0_{loc}(\R^N)$.
This is because the sequence $(\tilde{u}_{k}(0',-1) )$ is bounded in $\R$ and the function $\tilde{u}$ is bounded and uniformly continuous on any finite strip. Hence, up to a subsequence, $\tilde{u}_{k} \to \tilde{u}_{\infty}$ in $C^{0}_{\text{loc}}(\R^N)$, where $\tilde{u}_{\infty}(x) = \tilde{u}(x' +(x^{\infty})^{'}, x_N)$ for any $x \in \R^N$. 

Thanks to those information, we can follow the proof until \eqref{disug-infinito-3}. 
Now, if we set $w_{\tilde{t}}:=u_{\infty, \tilde{t}}-u_{\infty}$ on $\Sigma^{g_{\infty}}_{\tilde{t}},$ we see that 
			\begin{equation}\label{prepar-strong-max-princ}
				\left\{
				\begin{array}{cll}
					-\Delta w_{\tilde{t}} = f(u_{\infty, \tilde{t}})  - f(u_{\infty}) \geq- L_f w_{\tilde{t}} & \text{in} & \Sigma^{g_{\infty}}_{\tilde{t}},\\
					w_{\tilde{t}}\geq 0 & \text{in} & \Sigma^{g_{\infty}}_{\tilde{t}},\\
					w_{\tilde{t}}(0',x_{\infty})=0,
				\end{array}
				\right.
			\end{equation}
where now $L_f$ denotes the Lipschitz-constant of $f$ on the closed interval $\left[ 0, \max_{\overline{\Sigma^{g_{\infty}}_{\tilde{t}}}} \, u_{\infty, \tilde{t}} \right]$ (note that $\overline{\Sigma^{g_{\infty}}_{\tilde{t}}}$ is a compact set, since $g_{\infty}$ is coercive). 			 Since $(0',x^{\infty}_N)\in  \Sigma^{g_{\infty}}_{\tilde{t}}$,	the strong maximum principle and the continuity of $w_{\tilde{t}}$ ensure that 
			\begin{equation*}
				\begin{array}{lll}
					w_{\tilde{t}} \equiv 0 & \text{in the connected component of $\Sigma^{g_{\infty}}_{\tilde{t}}$ containing the point  $(0',x^{\infty}_N)$.}
				\end{array}
			\end{equation*}			
Since $\Omega^{\infty}$ is an epigraph, the latter implies that $u_{\infty}$ must vanish at some point of $\Omega^{\infty}$.
But (in view of the form of  $g_{\infty}$ and $\tilde{u}_{\infty}$) the latter implies that the solution $u$ must vanish at some point of $ \Omega$. A contradiction.  This proves the claim of Proposition \ref{PropDS}. 

The remaining part of the proof of Step 2  is the same of the one of Step 2 in Theorem \ref{TH1}, since $u$ is bounded on finite strips. 

Step 3 is unchanged, since it only requires that $f$ belongs to $ {Lip}_{loc}([0,+\infty)).$  \qed	

\section{Extensions to merely continuous epigraphs bounded from below \\ and further observations} \label{SectEXT}

In this section we prove some monotonicity results for solutions of \eqref{NonLin-PoissonEq} on certain merely continuous epigraphs $\Omega$ bounded from below, i.e.,where the function $g$ defining $\partial \Omega$ is continuous but not necessarily uniformly continuous. 
To this end we observe that the uniform continuity of the function $g$ enters into the proofs of Theorems \ref{TH1}-\ref{TH2bis}  only to use the following classical compactness result (via the Ascoli-Arzelà theorem):
\textit{let $(g_k)$ be a sequence of translations of a uniformly continuous $g : \R^{N-1} \to \R$, i.e., $g_k (\cdot) = g(\cdot + x_k),$ for some sequence $(x_k)$ of points of  $\R^{N-1}$.  If the sequence $(g_k)$ is bounded at a fixed point of $\R^{N-1}$, then it admits a subsequence converging uniformly on every compact sets of $\R^{N-1} $ }. \\
Therefore, all we need for the above mentioned proofs to work, is that the continuous function $g :\R^{N-1} \to \R$ is bounded from below and it satisfies the following compactness property :

$(\mathcal{P})$ \quad \textit{Any sequence $(g_k)$ of translations of $g$, which is bounded at some fixed point of $\R^{N-1}$, admits a subsequence converging uniformly on every compact sets of $\R^{N-1}$. } 

The above discussion leads to the following 

\begin{Defi} \label{Def-classG} Assume $N \geq 2.$ We say that a continuous function $g : \R^{N-1} \mapsto \R$ belongs to the class $\mathcal{G}$, if it satisfies the compactness property $(\mathcal{P})$.
\end{Defi} 

Before stating the new monotonicity results, let us show how large the class $\mathcal{G}$ is. Hereafter, we provide a wide (but non-exhaustive) list of members of $\mathcal{G}$.

\begin{enumerate}
\item Uniformly continuous functions on $\R^{N-1}$ belong to $\mathcal{G}$. 

\smallskip

\item Coercive continuous functions on $\R^{N-1}$ belong to $\mathcal{G}.$\footnote{Since the sequence $(g_k):= 
(g(\cdot + x_k))$ is bounded at some point of $\R^{N-1}$ and $g$ is coercive,  it follows that the sequence $(x_k)$ is bounded in $\R^{N-1}$. Therefore $(g_k)$ is uniformly equicontinuous on  compact sets of $\R^{N-1}$ , and the compactness follows from the Ascoli-Arzelà theorem.} 

\smallskip

\noindent In particular, any continuous function on $\R^{N-1}$ such that $ \lim_{\mid x \mid \mapsto\infty} g(x) \in (-\infty , + \infty]$ belongs to $\mathcal{G}$. 

\smallskip

\item Let us denote by $\mathsf{G}(\R^{N-1})$ the set of continuous functions $g :\R^{N-1} \to \R$ enjoying the following 
property : \textit{there exists a \textsf{continuous bijection} $\phi: \R  \to \R$  such that $\phi \circ g \in \mathcal{G}$.
}  \\ 
It is immediate to check that $\mathsf{G}(\R^{N-1}) \subset \mathcal{G}$.\footnote{Assume that $(g_k)$ is bounded at 
some point of $\R^{N-1}$, say $\bar x$, then the sequence of functions $((\phi \circ g)_k) :=((\phi \circ g)(\cdot + x_k))$ 
is bounded at $ \bar x$. Therefore, up to a subsequence, $(\phi \circ g)_k \rightarrow \varphi$ uniformly on compact sets of $\R^{N-1}$, since $\phi \circ g \in \mathcal{G}$ by assumption. But then we also have $g_k = \phi^{-1}((\phi \circ g)_k) \rightarrow \phi^{-1} \circ \varphi$ uniformly on compact sets of $\R^{N-1}$. } Moreover, the family $\mathsf{G}(\R^{N-1})$  strictly contains the one of  \textit{uniformly continuous functions} on $\R^{N-1}$ and the one of  \textit{coercive continuous functions}  on $\R^{N-1}$, as shown by the next examples.\footnote{\, Taking the identity function as $\phi$, we immediately see that uniformly continuous functions, as well as coercive continuous functions $g :\R^{N-1} \to \R$ do belong to $\mathsf{G}(\R^{N-1})$.} 

\smallskip

\begin{itemize}

\item [(3a)] For instance, the functions $g_1(x_1) = e^{x_1}$ if $ N=2,$ and $g(x)=e^{x_1 + \sum_{j=2}^{N-1} \cos^j(x_j)}$ if $ N \geq 3$, are neither uniformly continuous nor coercive on $\R^{N-1}$. Nevertheless, they  belong to the class 
$\mathsf{G}(\R^{N-1})$ (consider, for instance, the function
\begin{equation*}
			\phi(t)=\left\{
			\begin{array}{crl}
      		t & \text{if}&  t \leq 0,\\
        	    \log(t+1) & \text{if} & t > 0,
			\end{array}
				\right.
\end{equation*}
in the previous definition and observe that $\phi \circ g \in \mathcal{G}$ is uniformly continuous, hence  $\phi \circ g \in \mathcal{G}$). The same argument also proves that $g(x_1) = e^{e^{x_1}}$ and $g(x)=e^{e^{x_1 + \sum_{j=2}^{N-1} \cos^j(x_j)}}$ do belong to $\mathsf{G}(\R^{N-1})$. Since this argument can be iterated, we see that the class $\mathsf{G}(\R^{N-1})$ contains smooth functions, bounded from below, that are neither uniformly continuous nor coercive, and with arbitrary large growth at infinity.
Also note that, $g_1$ being convex, its epigraph satisfies a uniform exterior cone condition.

\smallskip

\item [(3b)] Assume $ N \geq 2$ and let $g \in C^2(\R^{N-1})$ be any positive function such that $ \nabla^{2} g\in L^{\infty}(\R^{N-1})$,  
then $\sqrt{g}$ is globally Lipschitz-continuous on $\R^{N-1}$ (see Lemma $I$ in \cite{gg} for $N=2$). Therefore, we have $g \in \mathsf{G}(\R^{N-1})$.\footnote{Choose the function 
\begin{equation*}
			\phi(t)=\left\{
			\begin{array}{crl}
      		- \sqrt{-t} & \text{if}&  t < 0,\\
        	     \sqrt{t} & \text{if} & t \geq 0,
			\end{array}
				\right.
\end{equation*}
}
More generally, any $g \in C^2(\R^{N-1})$, bounded from below and such that $ \nabla^{2} g\in L^{\infty}(\R^{N-1})$ is a member of the class  $\mathsf{G}(\R^{N-1})$. 
\\ 
For instance, the function $ g= g(x_1,\ldots, x_{N-1}) = {(x_1)}^2 + \prod_{j=2}^{N-1} \sin(j x_j)$ belongs to $\mathsf{G}(\R^{N-1})$ for any $N \geq3$. 

\end{itemize}

\smallskip

\item For $N=2$, any continuous function $g :\R \to \R$ such that ${\ell}_{-}:= \lim_{x \mapsto -\infty}g(x)\in (-\infty , + \infty]$ and $ {\ell}_{+}:= \lim_{x \mapsto +\infty} g(x) \in (-\infty , + \infty]$ belongs to $\mathcal{G}$. Therefore, any quasiconvex (resp. quasiconcave) continuous function bounded from below belongs to $\mathcal{G}$. In particular, any monotone  continuous function bounded from below and  any convex functions bounded from below belongs to $\mathcal{G}$.

\smallskip

\item Assume $ 2 \leq n < N$ and let $ g : \R^{n-1} \to \R$  be a member of $\mathcal{G}$. Then, the function $ \tilde{g} : \R^{N-1} \to \R$ defined by $ \tilde{g} (x_1, \ldots, x_{N-1}) = g(x_1,\ldots, x_{n-1})$ satisfies the compactness property $(\mathcal{P})$, as a function on $\R^{N-1}$. Therefore, $ \tilde{g} \in  \mathcal{G}$, as a function on $\R^{N-1}$. \\ 
In particular, for $ N \geq 2$, the functions $ g(x_1,\ldots, x_{N}) = e^{e^{x_1}}$ and $ g(x_1,\ldots, x_{N}) =  e^{x_1} - 4 \arctan(x_1) - 2 $ belong to $\mathcal{G}$, are bounded from below  and their epigraphs satisfy a uniform exterior cone condition.

\smallskip

\item  Assume $N \geq 2$. Let $g \in \mathcal{G}$ and let $T : \R^{N-1} \to \R^{N-1}$ be a transformation of the form $ T(x) = Ax +b$, where $A$ an invertible real matrix and $b \in \R^{N-1}$. Then, $g \circ T \in \mathcal{G}$. In particular, this results applies when $T$ is an isometry of $\R^{N-1}$.

\smallskip

\item  Assume $N \geq 2$ and $ \lambda \geq 0.$ Let $g, \tilde{g} \in \mathcal{G}$ be bounded from below. Then, $\lambda g \in \mathcal{G}$ and $ g+ \tilde{g} \in \mathcal{G}$.\\ 
In particular, for $ N \geq 2$, the function $g(x_1,\ldots, x_{N}) = x_1^4 + e^{x_2}$ belongs to $\mathcal{G}$, is bounded from below  and its epigraph satisfies a uniform exterior cone condition. 

\smallskip

\item By combining items $(1)$-$(7)$, one can easily build further examples of functions $g$  belonging to $\mathcal{G}$ in any dimension $N \geq 2$. In particular, one can construct members of $\mathcal{G}$ bounded from below, that are neither uniformly continuous nor coercive, with arbitrary large growth at infinity, and such that their epigraphs satisfy a uniform exterior cone condition.
 
\end{enumerate} 

In view of the above discussions,  we can now state the monotonicity results for continuous epigraphs defined by functions $g$ belonging to the class $\mathcal{G}$.  Let us start with the extension of Theorem \ref{TH1} and Corollary \ref{Cor1}. 

\begin{thm}\label{TH_fonc_G}
Let  $N \geq 2$ and let $\Omega $ be an epigraph bounded from below and defined by a function $g \in \mathcal{G}$. Assume $f \in {Lip}_{loc}([0,+\infty))$ with 
\begin{equation}\label{cond-in-zero-bis}
          \liminf_{t \to 0^+} \frac{f(t)}{t} > 0.
\end{equation}

(i) If $u \in C^0(\overline{\Omega}) \cap H^1_{loc}(\overline{\Omega})$ is a   distributional solution to \eqref{NonLin-PoissonEq} which is uniformly continuous on finite strips. \\ 
Then $u$ is strictly increasing in the $x_N$-direction, i.e., $\frac{\partial u}{\partial x_N}>0$ in $\Omega$.

(ii) If $u \in C^2(\Omega) \cap C^0(\overline{\Omega})$ is a classical solution of \eqref{NonLin-PoissonEq} such that $\nabla u \in L^{\infty}(\Omega)$. 
Then $u$ is strictly increasing in the $x_N$-direction, i.e., $\frac{\partial u}{\partial x_N}>0$ in $\Omega$.
\end{thm}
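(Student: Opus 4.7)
The plan is to follow the proof of Theorem \ref{TH1} (for part (i)) and of Corollary \ref{Cor1} (for part (ii)) essentially verbatim, observing that the only role played by the uniform continuity of $g$ was to guarantee the compactness of the translated sequence $(g_k)$ appearing in the proof of Proposition \ref{PropDS}, and that this compactness is precisely what the defining property $(\mathcal{P})$ of the class $\mathcal{G}$ provides.

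Concretely, for part (i), I would define $\Lambda = \{t>0 : u \leq u_{\theta} \text{ in } \Sigma_{\theta}^g \text{ for all } 0<\theta<t\}$ and carry out the three-step strategy of Theorem \ref{TH1}. Step 1 (that $\Lambda$ is nonempty) is unchanged: it relies only on Theorem \ref{thComp2} applied to $u,u_\theta$ on caps of small section, the growth of $u$ on finite strips being linear by the uniform continuity assumption on $u$, together with the fact that $\mathtt{S}_{e_N}(\Sigma_\theta^g)\leq \theta$. Step 3 (the Hopf lemma applied to $u_t-u$) is unchanged, since it only uses $f \in \mathrm{Lip}_{\mathrm{loc}}$. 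The heart of the modification is in Step 2, i.e.\ in Proposition \ref{PropDS}: arguing by contradiction one produces points $x^k \in \overline{\Sigma_{\delta,\tilde t -\delta}^g}$ and the translated sequence $g_k(\cdot)=g(\cdot+(x^k)')$ with $0 \leq g_k(0') \leq \tilde t$. At this stage, the proof of Theorem \ref{TH1} invoked Ascoli-Arzelà using the uniform equicontinuity of $(g_k)$ to extract $g_k \to g_\infty$ in $C^0_{\mathrm{loc}}(\R^{N-1})$; here I simply invoke property $(\mathcal{P})$ (which is the defining property of $\mathcal{G}$) to obtain exactly the same conclusion, with $g_\infty$ continuous (as a uniform limit on compacta of continuous functions). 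The remaining ingredients of the proof of Proposition \ref{PropDS}, namely the Ascoli-Arzelà compactness of $(\tilde u_k)$, the limit problem \eqref{eq18} on $\Omega^\infty$, the sliding argument producing $u_\infty>0$ via the assumption \eqref{cond-in-zero-bis}, and the strong maximum principle leading to the contradiction via \eqref{compCNX}, all carry over verbatim, since they only use the continuity of $g_\infty$, the uniform continuity of $u$ on finite strips (of $\Omega$ and hence of each $\Omega^k$), and hypotheses on $f$ already present.

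For part (ii), the reduction to (i) is the same as in the proof of Corollary \ref{Cor1}: the boundedness of $\nabla u$ on $\Omega$ together with $u=0$ on $\partial \Omega$ imply, via the mean value theorem, that $u$ is globally Lipschitz on $\Omega$, hence in particular uniformly continuous on finite strips and belonging to $H^1_{\mathrm{loc}}(\overline{\Omega})$. Thus (ii) follows from (i).

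The main (and essentially only) obstacle is the one already identified in Step 2: checking that the compactness property $(\mathcal{P})$ really is sufficient to replace uniform continuity of $g$ in the extraction of a limit epigraph $\Omega^\infty=\{x_N>g_\infty(x')\}$. Once this is accepted, no new analytic work is needed, since neither the comparison principles (Theorems \ref{thComp1}-\ref{thComp2}) nor the uniform H\"older estimate of Proposition \ref{prop2.2} --- the latter is not needed here as \eqref{cond-in-zero-bis} is in force --- require any regularity of $g$ beyond continuity. Any other potential use of uniform continuity of $g$ in the original proof (for instance, in handling $\partial\Omega^k$ in the limit, or in the sliding argument via \eqref{inclusion_B_RT}--\eqref{inclusion_B_RT-bis}) already only used the local uniform convergence $g_k\to g_\infty$, so those passages go through unchanged.
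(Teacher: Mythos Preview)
Your proposal is correct and matches the paper's own approach exactly: the paper does not give a separate proof of Theorem \ref{TH_fonc_G} but explains, in the discussion preceding Definition \ref{Def-classG}, that the uniform continuity of $g$ in the proof of Theorem \ref{TH1} was used only to extract a convergent subsequence of the translated sequence $(g_k)$, and that property $(\mathcal{P})$ defining the class $\mathcal{G}$ is precisely what is needed to make the same argument go through. Your reduction of part (ii) to part (i) via the argument of Corollary \ref{Cor1} is likewise exactly what the paper intends.
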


If we also assume that  the epigraph satisfies a uniform exterior cone condition, then we can prove the following extensions of Theorem \ref{TH2} and Theorem \ref{TH2bis}.

\smallskip

\begin{thm}\label{TH_fonc_G-bis}
Let  $N \geq 2$ and let $\Omega $ be an epigraph defined by a function $g \in \mathcal{G}$.
Also suppose that $\Omega$ is bounded from below and satisfies a uniform exterior cone condition.

(i) Assume $ f \in {Lip}([0,+\infty))$ with $f(0) \geq 0$ and let $u \in C^0(\overline{\Omega}) \cap H^1_{loc}(\overline{\Omega})$ be a distributional solution to \eqref{NonLin-PoissonEq} with at most exponential growth on finite strips.
Then $u$ is strictly increasing in the $x_N$-direction, i.e., $\frac{\partial u}{\partial x_N}>0$ in $\Omega$.

(ii) Assume $ f \in {Lip}_{loc}([0,+\infty))$ with $f(0)\geq0$ and let $u \in C^0(\overline{\Omega}) \cap H^1_{loc}(\overline{\Omega})$ be a distributional solution to \eqref{NonLin-PoissonEq} which is bounded on finite strips.
Then $u$ is strictly increasing in the $x_N$-direction, i.e., $\frac{\partial u}{\partial x_N}>0$ in $\Omega$.
\end{thm}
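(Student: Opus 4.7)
The plan is to prove Theorem \ref{TH_fonc_G-bis} by a direct adaptation, \emph{mutatis mutandis}, of the proofs of Theorem \ref{TH2} (for part (i)) and Theorem \ref{TH2bis} (for part (ii)). The guiding observation is the one already stated at the beginning of Section \ref{SectEXT}: in those proofs the uniform continuity of $g$ is invoked \emph{only} to apply Ascoli-Arzelà to the sequence of translates $g_{k}(\cdot)=g(\cdot+(x^{k})')$ appearing in the proof of Proposition \ref{PropDS}, in order to extract a subsequence converging in $C^{0}_{loc}(\R^{N-1})$ to some continuous $g_{\infty}$. But this sequence is bounded at $0'$, by \eqref{controllo-AA}, and $g\in\mathcal{G}$ means precisely that property $(\mathcal{P})$ supplies such a subsequence. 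Moreover, the uniform exterior cone condition on $\Omega$ is inherited by each translated epigraph $\Omega^{k}$, with the same reference cone, so Proposition \ref{prop2.2} applies uniformly in $k$ throughout.

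Concretely, I would organize the argument in the same three steps used earlier. For Step 1 (that $\Lambda$ is not empty), one applies Theorem \ref{thComp2} in case (i) (using that $u$ has at most exponential growth on finite strips and $f\in\mathrm{Lip}([0,+\infty))$) and Theorem \ref{thComp1} in case (ii) (using that $u$ is bounded on finite strips and $f\in\mathrm{Lip}_{loc}([0,+\infty))$); neither comparison principle uses uniform continuity of $g$, so this step is unchanged. For Step 2, I would follow verbatim the proof of Proposition \ref{PropDS} given for Theorem \ref{TH2}, replacing at the single relevant moment the Ascoli-Arzelà argument for $(g_{k})$ by an appeal to $(\mathcal{P})$; the normalization $v_{k}=u(\cdot+((x^{k})',0))/\alpha_{k}$, the Harnack bound on the compact set $\mathcal{C}$, the uniform Hölder estimates on $v_{k}$ in $\mathfrak{C}^{g_{k}}(0',R/2,T)$ from Proposition \ref{prop2.2}, and the passage to a nontrivial solution $v_{\infty}>0$ on $\mathfrak{C}^{g_{\infty}}(0',R/2,T)$ with $v_{\infty}(0',x_{N}^{\infty})=v_{\infty,\tilde t}(0',x_{N}^{\infty})$ all proceed unchanged, since they depend only on the uniform exterior cone condition and on the bound/exponential bound of $u$ on finite strips (both of which survive the horizontal translation $x'\mapsto x'+(x^{k})'$). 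The strong maximum principle then yields $v_{\infty}\equiv v_{\infty,\tilde t}$ on a neighborhood of $(0',x_{N}^{\infty})$, contradicting $v_{\infty}>0$ on the interior of $\Omega^{\infty}$, and thereby giving Proposition \ref{PropDS} in this setting. The closing sub-argument of Step 2, which bridges a thin cap via Theorem \ref{thComp1} (case (ii)) or Theorem \ref{thComp2} (case (i)), is then applied exactly as before. Finally, Step 3 (passing from $u\le u_{t}$ for every $t>0$ to $\partial u/\partial x_{N}>0$ in $\Omega$) depends only on Hopf's lemma and on $f\in\mathrm{Lip}_{loc}([0,+\infty))$, so it is unchanged.

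The only genuine work, and the part requiring the most care, is the bookkeeping for Step 2: one has to verify that \emph{every} invocation of the uniform continuity of $g$ in the proofs of Theorems \ref{TH2} and \ref{TH2bis} is, upon inspection, merely a use of the compactness of translates on compact sets of $\R^{N-1}$, and that the ingredients downstream (uniform Harnack, the Hölder estimates of Proposition \ref{prop2.2}, convergence of $\tilde u_{k}$ in $C^{0}_{loc}(\R^{N})$, and the limiting Dirichlet problem on $\Omega^{\infty}$) are controlled purely by the uniform exterior cone condition together with the assumed bounds on $u$ on finite strips. Once this verification is carried out, the proofs of Theorem \ref{TH2} and Theorem \ref{TH2bis} transfer, with only cosmetic modifications, to the class $\mathcal{G}$ and deliver both parts (i) and (ii) of Theorem \ref{TH_fonc_G-bis}.
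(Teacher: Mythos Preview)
Your proposal is correct and takes essentially the same approach as the paper. The paper's own treatment of Theorem \ref{TH_fonc_G-bis} consists precisely of the observation, made at the beginning of Section \ref{SectEXT}, that the uniform continuity of $g$ enters the proofs of Theorems \ref{TH2} and \ref{TH2bis} only through the Ascoli--Arzel\`a compactness of the translated sequence $(g_k)$, and that property $(\mathcal{P})$ supplies exactly this compactness; your step-by-step verification elaborates on this in more detail than the paper does, but the argument is the same.
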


\smallskip

Note that Theorem \ref{TH_fonc_G} and Theorem \ref{TH_fonc_G-bis} apply to the explicite examples of functions $g$ provided in the above items $(1)$-$(7)$.  They also recover the results in the very recent preprint \cite{GMS}, where the monotonicity is proved for some classical solutions to \eqref{NonLin-PoissonEq} with $f(t) = t^q$ and $q \geq 1$.\footnote{It is immediate to see that any semicoercive and continuous function is bounded from below and belongs to our class $ \mathcal{G}$ (see items $(5)$ and $(6)$ above). This observation and Lemma A.2 in \cite{GMS} also show that the convex epigraph case is covered by the techniques we have developed in the present article. \smallskip}  

\medskip

The next result applies to solutions of \eqref{NonLin-PoissonEq} where $\Omega$ is merely a continuous epigraph and 
$f$ is a non-increasing function, possibly discontinuous, and with no restriction on the sign of $f(0)$. 

\begin{thm}\label{TH3-disc}
Let $\Omega$ be any continuous epigraph bounded from below and let $f: [0,\infty)  \mapsto \R$ be any non-increasing function.
Let $u \in C^0(\overline{\Omega}) \cap H^1_{loc}(\overline{\Omega})$ be a distributional solution to \eqref{NonLin-PoissonEq} 
with subexponential growth on finite strips 
\footnote{i.e., for any $R>0$, 
$$ \limsup_{\overset {\vert x \vert \to \infty,} {x \in \Omega \cap \left \lbrace x_N <R \right\rbrace}} \frac{\ln u(x)}{\vert x \vert} \leq 0. $$}.  \\   
Then $u$ is non-decreasing, i.e., $\frac{\partial u}{\partial x_N} \geq 0$ in $\Omega$.\footnote{\, Note that $ u \in C^1(\Omega),$ since $f(u) \in L^{\infty}_{loc}(\Omega)$.}  \\ 
Moreover, if $f \in  {Lip}_{loc},$ then $u$ is strictly increasing, i.e., $\frac{\partial u}{\partial x_N} > 0$ in $\Omega$.
\end{thm}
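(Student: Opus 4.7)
The plan is to run the moving-plane procedure adapted to the epigraph exactly as in Step 1 of the proof of Theorem \ref{TH1}, but to observe that the whole scheme collapses into a single step thanks to Theorem \ref{thComp3}. Indeed, since $f$ is non-increasing the new comparison principle holds on strips \emph{without any smallness assumption on the section}, and the subexponential growth of $u$ is precisely what is required to place it in the admissible range of the growth parameter $\gamma$ at \emph{every} height $\lambda>0$. There is thus no critical $\tilde t$ to push past, and the delicate compactness arguments of Theorems \ref{TH1}--\ref{TH2} are no longer needed.

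After a harmless vertical translation, I assume $\inf_{\R^{N-1}}g=0$. For $\lambda>0$ I set $\Sigma_\lambda^g=\{x\in\R^N : g(x')<x_N<\lambda\}$ and $u_\lambda(x)=u(x',2\lambda-x_N)$. I first check that $u\le u_\lambda$ on $\partial\Sigma_\lambda^g$: on $\{x_N=\lambda\}\cap\overline\Omega$ one has $u=u_\lambda$, whereas on $\partial\Omega\cap\{x_N<\lambda\}$ one has $u=0$ and $u_\lambda(x',g(x'))=u(x',2\lambda-g(x'))>0$, since $2\lambda-g(x')>\lambda>g(x')$. Both $u$ and $u_\lambda$ are distributional solutions of $-\Delta v=f(v)$ in $\Sigma_\lambda^g$ of the required regularity class, and $u_\lambda$ inherits the subexponential growth from $u$ because $|(x',2\lambda-x_N)|\le|x|+2\lambda$ on $\Sigma_\lambda^g$. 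Since $\mathtt{S}_{e_N}(\Sigma_\lambda^g)\le\lambda$, I can choose $\gamma\in\bigl[0,\,\pi/(4\lambda\sqrt{e-1})\bigr)$ together with constants $A,\delta$ such that $|u|,|u_\lambda|\le A|x|^\delta e^{\gamma|x|}$ holds on $\Sigma_\lambda^g$. Theorem \ref{thComp3} then yields $u\le u_\lambda$ in $\Sigma_\lambda^g$ for every $\lambda>0$.

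From this I immediately read off monotonicity: given $(x',s),(x',t)\in\Omega$ with $s<t$, set $\lambda=(s+t)/2$ and evaluate the previous inequality at the point $(x',s)\in\Sigma_\lambda^g$ to obtain $u(x',s)\le u_\lambda(x',s)=u(x',t)$. The pointwise derivative $\partial_{x_N}u$ makes sense because a non-increasing function on $[0,\infty)$ is locally bounded, so $f\circ u\in L^\infty_{loc}(\Omega)$, and standard elliptic regularity gives $u\in W^{2,p}_{loc}(\Omega)\subset C^{1,\alpha}_{loc}(\Omega)$; hence $\partial_{x_N}u\ge0$ in $\Omega$.

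For the strict monotonicity when $f\in {Lip}_{loc}$, I argue exactly as in Step 3 of the proof of Theorem \ref{TH1}. Fix $(x',t)\in\Omega$ and a ball $B=B((x',t),r)$ with $\overline B\subset\Omega$; set $D=B\cap\{x_N<t\}$ and $w=u_t-u$. Then $w\ge0$ in $D$, $w=0$ on $B\cap\{x_N=t\}$, and $-\Delta w+Lw\ge0$ in $D$ with $L$ the Lipschitz constant of $f$ on $[0,\max_{\overline D}u_t]$; this uses the inequality $f(u_t)-f(u)\ge -L(u_t-u)$, which follows from $u_t\ge u$ and local Lipschitz continuity. Hopf's lemma at $(x',t)$, whose outward normal to $D$ is $e_N$, forces $\partial_{x_N}w(x',t)<0$, i.e.\ $\partial_{x_N}u(x',t)>0$, after excluding $w\equiv 0$ in $D$ by the standard reflection argument based on $u>0$ in $\Omega$ and $u=0$ on $\partial\Omega$. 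The only genuine obstacle in the whole proof is the conceptual one already noted: recognising that, for non-increasing $f$, Theorem \ref{thComp3} applies at \emph{every} $\lambda>0$ under subexponential growth alone, so the moving-plane scheme trivialises.
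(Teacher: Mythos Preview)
Your proof is correct and follows essentially the same approach as the paper: apply Theorem \ref{thComp3} directly at every height $\lambda>0$ (which is possible because $f$ is non-increasing, so no smallness on the section is required), deduce $u\le u_\lambda$ for all $\lambda$, and then invoke Hopf's lemma as in Step~3 of Theorem \ref{TH1} for the strict inequality when $f\in Lip_{loc}$. Your write-up is in fact more explicit than the paper's in justifying why the subexponential growth hypothesis places $u$ and $u_\lambda$ in the admissible range of Theorem \ref{thComp3} and in mentioning the exclusion of $w\equiv 0$ before applying Hopf.
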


\begin{proof} 
The proof is a straightforward application of Theorem \ref{thComp3}. Indeed, for any $\theta >0$ we have
$u,u_{\theta} \in H^{1}_{\text{loc}}(\overline{\Sigma^{g}_{\theta}})\cap C^{0}(\overline{\Sigma_{\theta}^{g}})$ and 
\begin{equation*}
	\left\{
	\begin{array}{ccl}
		- \Delta u-f(u) =0=-\Delta u_{\theta}-f(u_{\theta}) & \text{in} & \Sigma_{\theta}^{g},\\  
		u\leq  u_{\theta} & \text{on} & \partial \Sigma_{\theta}^{g}.
	\end{array}
	\right.
\end{equation*} 
Since $u$ and $u_{\theta}$ have subexponential growth on $\Sigma_{\theta}^{g}$ (which is bounded in the direction $e_{N}$) we can apply Theorem \ref{thComp3} to get that 
\begin{equation*}
	u\leq u_{\theta} \quad \text{in} \quad \Sigma_{\theta}^{g}.
\end{equation*} 
Therefore, $u$ is non-decreasing in the $x_N$-direction. \\  
If $f$ is locally Lipschitz-continuous, we conclude as in Step $3$ of Theorem \ref{TH2bis}. 
\end{proof} 

\medskip

Some remarks are in order.

\begin{rem}
(i) We note  that the proof of Theorem \ref{TH3-disc} also applies to some unbounded domains that are not necessarily epigraphs. 
Indeed, given any domain $\Omega \subset \R^N$ bounded from below, it is immediate to see that the proof of Theorem \ref{TH3-disc} applies if $\Omega$ contains the reflection $($with respect to the hyperplane $\{x_{N}=\theta\})$ of any cap 
$\Omega \cap \{x_{N}< \theta\}$. Since the latter property is clearly satisfied if $\Omega \subsetneq \R^N$ supports a monotone solution to \eqref{NonLin-PoissonEq}, we see that the above result actually caracterizes the euclidean proper domains for which the homogeneous Dirichlet BVP \eqref{NonLin-PoissonEq} admits a monotone solution.\footnote{ \, this can be formulated in the following equivalent way : Problem \eqref{NonLin-PoissonEq} with $f$ non-increasing, admits a monotone solution on a domain $\Omega\subsetneq \R^N$, if and only if, $\Omega$ is contained in an affine half-space (whose inner normal is denoted by $\nu$) and, for any $x \in \Omega$ the open half-line $\left\lbrace x + t \nu, \, t >0 \right\rbrace$ is contained in $\Omega.$ That is, if and only if, $\Omega$ is contained in an affine half-space (whose inner normal is denoted by $\nu$) and $\Omega$ is $\nu$-invariant.} \\  
For example, the domain $ \Omega_4 = \bigcup_{k \in \Z} \{x \in \R^2\, : \,  |x_1-(3+4k)|<1 \, , \,\, x_2 >-1\} \bigcup 
\{x \in \R^2 \, : \, x_2> 0\}$ 
$($see Figure \ref{fig:omega_1}$)$ and the open orthant $ \Omega_5 = \{x \in \R^{N} \, : \, x_1>0, \ldots, \, x_N>0\}$ satisfy this property, but they are not epigraphs with respect to $e_N$ (in fact, $\Omega_4$ is never an epigraph, that is, there is no unit vector $\nu$ of $\R^2$ that allows it to be represented as an epigraph with respect to $\nu$). \\ 
Another example is depicted in Figure \ref{fig:omega_1bis}.  

\medskip
\begin{multicols}{2}
	
	\begin{figure}[H]
		\centering
		\includegraphics[width=8.3cm]{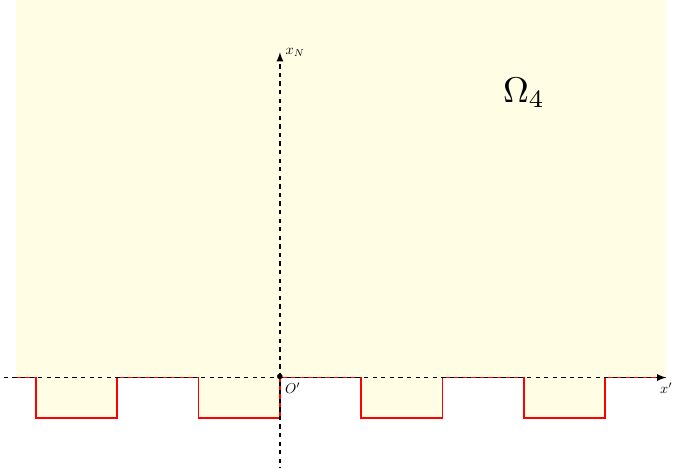}	
		\caption{$\Omega_4$}
		\label{fig:omega_1}
	\end{figure}
	\begin{figure}[H]
		\centering
		\includegraphics[width=8.1cm]{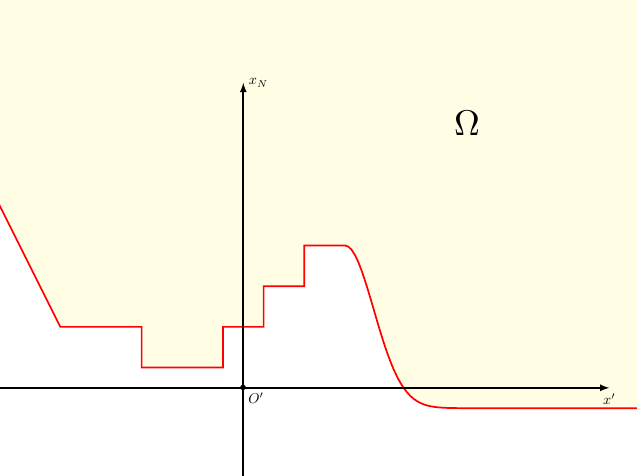}
		\caption{}
		\label{fig:omega_1bis}
	\end{figure}
\end{multicols}

(ii) We also observe that the first conclusion of Theorem \ref{TH3-disc}, namely that $u$ is non-decreasing, is sharp. 
See the explicit example \ref{Ex3} in Section \ref{SectEXAMPLES}. 
\end{rem}

We conclude this section with the following general result, which we believe to be of independent interest. This is a straightforward application of our comparison principles (see Step 1 in the proof of our monotonicity theorems). 
It shows that the moving plane method can be started irrespectively of the value of $f(0)$ and for a large class of unbounded domains. Specifically we have the following

\begin{cor}[\textbf {Starting the moving planes method}]\label{IniziareMP}
Assume $N \ge 2$. Let $\Omega$ be a domain contained in the upper half-space $\R^N_+$ and also assume that $\Omega$ contains the reflection $($with respect to the hyperplane $\{x_{N}=R\})$ of a finite strip 
$\Omega \cap \{x_{N}< R\} \not \equiv \emptyset$. \\
Let $u$ be a distributional $($resp. classical$)$ solution to \eqref{NonLin-PoissonEq}. Suppose that one of the following assumptions is in force : 

(i) $ f \in {Lip}_{loc}([0,+\infty))$ and $u$ is bounded on $ \, \Omega \cap \{x_{N}< R\}$;

(ii) $ f \in {Lip}([0,+\infty))$ $($resp. a non-increasing function$)$ and $u$  has at most exponential growth on 
$\, \Omega \cap \{x_{N}< R\}$.

Then there exists $R_0 \in (0,R)$ such that $ \, {\Omega \cap \left \lbrace x_N <R_0 \right\rbrace  \not \equiv \emptyset}$ and 
$$
\frac{\partial u}{\partial x_N} > 0 \quad  \textit{in} \quad  \Omega \cap \left \lbrace x_N <R_0 \right\rbrace .
$$
\end{cor}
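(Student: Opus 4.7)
The plan is to mimic Step 1 and Step 3 of the moving-planes argument from the proofs of Theorems \ref{TH1}, \ref{TH2bis}, and \ref{TH2}: start the reflection procedure at a single height $R_{0}$ small enough that one of the comparison principles of Section \ref{SectA} applies directly, and then promote the resulting one-sided inequality to a pointwise derivative estimate via the Hopf boundary lemma. Since no limiting ``$\sup \Lambda = +\infty$'' argument is needed, the delicate compactness/scaling part of the full monotonicity theorems plays no role here.

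For $\theta \in (0, R]$, set $\Sigma_{\theta} := \Omega \cap \{x_N < \theta\}$ and $u_{\theta}(x', x_N) := u(x', 2\theta - x_N)$. The reflection hypothesis guarantees that $u_{\theta}$ is well defined on $\Sigma_{\theta}$, belongs to $H^{1}_{loc}(\overline{\Sigma_{\theta}}) \cap C^{0}(\overline{\Sigma_{\theta}})$, and satisfies $u \leq u_{\theta}$ on $\partial \Sigma_{\theta} = (\{x_N = \theta\} \cap \overline{\Omega}) \cup (\partial \Omega \cap \{x_N < \theta\})$: equality holds on the horizontal slice, while $0 = u < u_{\theta}$ on the lateral part because $u > 0$ in $\Omega$ and $(x', 2\theta - x_N) \in \Omega$ by the reflection hypothesis. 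Both $u$ and $u_{\theta}$ solve $-\Delta w = f(w)$ in $\mathcal{D}'(\Sigma_{\theta})$ by translation invariance in the $x_N$-variable, and $\mathtt{S}_{e_N}(\Sigma_{\theta}) \leq \theta$ since $\Omega \subset \R^{N}_{+}$. Depending on the assumption in force, I then apply: under (i), Theorem \ref{thComp1}(i) with $M := \sup_{\Omega \cap \{x_N < R\}} u < +\infty$ (or Theorem \ref{thComp1}(ii) in the non-increasing case), which produces a threshold $\varepsilon = \varepsilon(f, M) > 0$; under (ii), Theorem \ref{thComp2} together with Remark \ref{rem-striscia-exp} (or Theorem \ref{thComp3}), exploiting that $u$, and hence $u_{\theta}$, has at most exponential growth on the finite strip $\Omega \cap \{x_N < R\}$. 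Taking $R_{0} := \min(\varepsilon, R)/2 > 0$, one concludes $u \leq u_{\theta}$ in $\Sigma_{\theta}$ for every $\theta \in (0, R_{0}]$.

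The pointwise strict inequality is then Step 3 of Theorem \ref{TH1} essentially verbatim: for any $(x', t) \in \Omega$ with $t < R_{0}$, choose $r > 0$ with $\overline{B((x', t), r)} \subset \Omega$ and observe that $w := u_{t} - u$ satisfies $-\Delta w + L_{f, t}\, w \geq 0$ and $w \geq 0$ in $\Sigma_{t} \cap B((x', t), r)$, while $w = 0$ on $\{x_N = t\} \cap B((x', t), r)$, where $L_{f, t}$ is the Lipschitz constant of $f$ on the relevant compact range. The Hopf boundary lemma then forces $\partial w / \partial x_{N}(x', t) < 0$, that is $\partial u / \partial x_{N}(x', t) > 0$, as required. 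The only genuinely delicate point, and therefore the main obstacle, is the case-by-case verification that the pair $(u, u_{\theta})$ satisfies the full list of hypotheses of the comparison theorem being invoked (regularity, boundary ordering, matching growth or boundedness); this is routine once one notes that the growth/boundedness of $u$ on the finite strip $\Omega \cap \{x_N < R\}$ transfers to $u_{\theta}$ by its very definition.
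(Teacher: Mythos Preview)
Your proposal is correct and matches the paper's own argument, which simply states that the corollary ``is a straightforward application of our comparison principles (see Step 1 in the proof of our monotonicity theorems)''; you have written out precisely those Step~1 and Step~3 details, with the appropriate case distinction between Theorem~\ref{thComp1}, Theorem~\ref{thComp2}/Remark~\ref{rem-striscia-exp}, and Theorem~\ref{thComp3}.
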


The above result extends the one proved in \cite{Far-Sandro} for the special case of a half-space. 

For sake of clarity (and simplicity) we have stated the above result in the case $\Omega \subseteq \R^N_+$. Since the considered problem is invariant by isometry, it is clear that the same result holds if $\Omega$ is contained in an affine open half-space $H$. In this case, the monotonicity will be obtained with respect to $\nu$, the inner normal to $H$.

\section{Some applications to classification and non-existence results} \label{Sect-Applic}

In this section, we apply our monotonicity results to prove some classification and nonexistence results for the problem \eqref{NonLin-PoissonEq} on general continuous epigraphs defined by a function $g \in \mathcal{G}$ (see Section \ref{SectEXT}).

\begin{thm}\label{th_n_entre_2_et_11}
Let $\Omega $ be an epigraph defined by a function $g \in \mathcal{G}$.
Also suppose that $\Omega$ is bounded from below and satisfies a uniform exterior cone condition. \\
Let $u \in C^0(\overline{\Omega}) \cap H^1_{loc}(\overline{\Omega})$ be a bounded distributional solution to
\begin{equation}\label{equation_sans_neumann}
		\left\{
		\begin{array}{cll}
			- \Delta u=f(u) & \text{in}& \Omega,\\
			u \geq 0 & \text{in} & \Omega,\\
			u=0 & \text{on} & \dr\Omega.
		\end{array}
		\right.
	\end{equation} 
Assume that 	$f \in C^{1}([0,+\infty))$, $f(t)>0$ for $t>0$ and $2\leq N \leq 11$, then $u \equiv 0$ and $f(0)=0$. 
\end{thm}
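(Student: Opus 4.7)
The strategy is the classical one: use monotonicity to reduce to an entire equation on $\R^{N-1}$ and then invoke a Liouville-type classification of bounded stable solutions in low dimensions.

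\emph{Step 1: Dichotomy and monotonicity.} I assume for contradiction that $u \not\equiv 0$. Since $f$ is continuous on $[0,+\infty)$ with $f(t)>0$ for $t>0$, we have $f(0)\geq 0$ and $f(u)\geq 0$ on $\Omega$. Thus $-\Delta u \geq 0$, and since $u\geq 0$ vanishes on $\partial\Omega$, the strong maximum principle (applied to $-\Delta u+L u\geq 0$ with $L$ a local Lipschitz bound for $f$) gives $u>0$ in $\Omega$. All hypotheses of Theorem \ref{TH_fonc_G-bis}(ii) are met, so $\frac{\partial u}{\partial x_N}>0$ in $\Omega$.

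\emph{Step 2: Limit profile on $\R^{N-1}$.} Set $U(x'):=\lim_{x_N\to+\infty} u(x',x_N)$; the limit exists and is finite since $u$ is bounded and monotone in $x_N$. Consider the shifted functions $u_n(x):=u(x',x_N+n)$, defined on $\Omega-n e_N =\{x_N>g(x')-n\}$. Since $g$ is bounded from below, these domains exhaust $\R^N$. Because $u$ is bounded and $f\in C^1$, standard interior elliptic regularity gives uniform $C^{2,\alpha}_{\mathrm{loc}}$ bounds on $u_n$, so $u_n\to \tilde U$ in $C^2_{\mathrm{loc}}(\R^N)$ (along a subsequence) with $\tilde U$ a bounded classical solution of $-\Delta \tilde U=f(\tilde U)$ on $\R^N$. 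Since $u_n(x',x_N)\to U(x')$ pointwise, $\tilde U(x',x_N)=U(x')$ is independent of $x_N$, hence $U\in C^2(\R^{N-1})$ solves
\begin{equation*}
-\Delta_{x'} U = f(U) \quad \text{in } \R^{N-1}, \qquad 0< U \leq \|u\|_{\infty}.
\end{equation*}
Positivity of $U$ follows from $U(x')\geq u(x',x_N)>0$ for any $x_N$ large enough so that $(x',x_N)\in\Omega$.

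\emph{Step 3: Stability inherited in the limit.} The function $w:=\frac{\partial u}{\partial x_N}>0$ satisfies the linearized equation $-\Delta w = f'(u)\,w$ in $\Omega$. The Fischer--Colbrie/Moss--Piepenbrink trick (multiplying by $\phi^2/w$ and integrating by parts) shows that $u$ is \emph{stable} on $\Omega$, i.e.
\begin{equation*}
\int_{\Omega} f'(u)\,\phi^2 \,\leq\, \int_{\Omega}|\nabla\phi|^2, \qquad \forall\,\phi\in C_c^{\infty}(\Omega).
\end{equation*}
Testing this with $\phi(x',x_N)=\psi(x')\eta(x_N-n)$, where $\psi\in C_c^{\infty}(\R^{N-1})$ and $\eta\in C_c^{\infty}(\R)$ with $\eta\equiv 1$ on a large interval, and then sending $n\to+\infty$ (and finally normalizing $\eta$) transfers the inequality to $U$: one obtains
\begin{equation*}
\int_{\R^{N-1}} f'(U)\,\psi^2 \,\leq\, \int_{\R^{N-1}}|\nabla_{x'}\psi|^2, \qquad \forall\,\psi\in C_c^{\infty}(\R^{N-1}),
\end{equation*}
so $U$ is a bounded, positive, stable classical solution of $-\Delta U=f(U)$ on $\R^{N-1}$.

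\emph{Step 4: Liouville in low dimension and conclusion.} Since $1\leq N-1\leq 10$, the classification theorem for bounded stable solutions of semilinear equations on $\R^n$ with $n\leq 10$ (Dupaigne--Farina type, cf.\ Dupaigne's monograph on stable solutions) forces $U$ to be constant, say $U\equiv c$. Then $f(c)=-\Delta c=0$; but $c\geq \inf_{\R^{N-1}}U>0$ because $u>0$ in $\Omega$, contradicting the assumption $f(t)>0$ for $t>0$. Hence $u\equiv 0$, and substituting into the equation yields $f(0)=0$.

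The main obstacle is Step 4: importing the sharp Liouville-type nonexistence result for bounded stable positive solutions on $\R^{N-1}$ up to dimension $10$, which is precisely what forces the hypothesis $2\leq N\leq 11$. Step 3 (the careful justification that stability passes to the limit through the cutoff argument) is the secondary technical point.
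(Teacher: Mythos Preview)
Your proof is correct and follows essentially the same route as the paper: dichotomy via the strong maximum principle, monotonicity from Theorem \ref{TH_fonc_G-bis}(ii), construction of the limit profile on $\R^{N-1}$, and the Dupaigne--Farina Liouville theorem for bounded stable solutions in dimension $\leq 10$ to reach a contradiction. The only difference is that you spell out the stability transfer (Step 3) and the limit construction (Step 2) in more detail, whereas the paper simply states that the limit $v$ is a positive stable classical solution and invokes \cite{DuFa1} directly.
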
  

\begin{proof} By standard interior regularity theory for elliptic equations $u$ belongs to $C^2(\Omega)$ hence, either $u \equiv 0$ in $\Omega$ or $u>0$ in $\Omega$ by the strong maximum principle.  Let us rule out the second case. Suppose $u>0$ in $\Omega$, then we can apply Theorem \ref{TH_fonc_G-bis} to prove that $\dfrac{\partial u}{\partial x_{N}}>0$ in $\Omega$. Hence, $u $ is a bounded stable solution to \eqref{equation_sans_neumann} of class $C^2(\Omega)$ and so, the function
	\begin{equation*}
		v(x')=\lim\limits_{x_{N} \to + \infty}u(x',x_{N}), \qquad \qquad x' \in \R^{N-1},
	\end{equation*}
is a positive stable classical solution to $-\Delta v=f(v)$ in $\R^{N-1}$, with $ N-1 \leq 10$. We can therefore apply  Theorem $1$ in \cite{DuFa1} to get that $v\equiv  const. =a>0$ et $f(a)=0$. Since the latter contradicts the positivity assumption on $f$, we infer that $u \equiv 0$ and so $ f(0)=0$.
 \end{proof}

\medskip 
 
The previous theorem remains true even for $N \geq 12$, if we add an assumption about the behaviour of $f$ at the origin. 
In this case, the desired conclusion is obtained by making use of some Liouville-type theorems for stable solutions established in \cite{Farina}, \cite{DuFa} and \cite{Fa3} (instead of Theorem $1$ in \cite{DuFa1}). \\
The desired results are the contents of Theorem \ref{th_n_plus que11} and Theorem \ref{th_n_plus que11-bis} below. 

\smallskip 

\begin{thm}\label{th_n_plus que11}
Assume $N \geq 12$ and let $\Omega $ be an epigraph defined by a function $g \in \mathcal{G}$.
Also suppose that $\Omega$ is bounded from below and satisfies a uniform exterior cone condition. \\
Let $u \in C^0(\overline{\Omega}) \cap H^1_{loc}(\overline{\Omega})$ be a bounded distributional solution to \eqref{equation_sans_neumann}
where $f \in C^{1}([0,+\infty))$ satisfies $f(t)>0$ for $t>0$ and $\liminf_{t \to 0^+} \frac{f(t)}{t^s} > 0,$
for some $ s \in \left[0, \frac{N-3}{N-5} \right) $. \\
Then $u \equiv 0$ and $f(0)=0$. 
\end{thm}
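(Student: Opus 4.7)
The strategy closely mirrors that of Theorem \ref{th_n_entre_2_et_11}, the only genuinely new ingredient being the Liouville-type result invoked at the end. Standard interior elliptic regularity gives $u \in C^2(\Omega)$, and the strong maximum principle yields the dichotomy $u \equiv 0$ in $\Omega$ or $u > 0$ in $\Omega$. The positivity of $f$ on $(0,+\infty)$, together with the $C^1$ regularity up to $0$, forces $f(0) \geq 0$, so the hypotheses of Theorem \ref{TH_fonc_G-bis}(ii) are satisfied; in the case $u > 0$, I obtain $\partial_{x_N} u > 0$ throughout $\Omega$.

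The function $w := \partial_{x_N} u$ is positive in $\Omega$ and solves the linearized equation $-\Delta w = f'(u)w$. Multiplying by $\varphi^2/w$ for $\varphi \in C^1_c(\Omega)$ and integrating by parts gives the stability inequality $\int_\Omega f'(u)\varphi^2 \leq \int_\Omega |\nabla \varphi|^2$, so $u$ is a bounded stable solution of $-\Delta u = f(u)$ in $\Omega$. I then define $V(x') := \lim_{x_N \to +\infty} u(x', x_N)$ for $x' \in \R^{N-1}$. The limit exists by monotonicity and boundedness, and standard interior $C^{2,\alpha}$ estimates applied to the $x_N$-translates of $u$ (which are defined on larger and larger portions of $\R^N$ since $\Omega$ is an epigraph and $g$ is locally bounded) guarantee that the convergence takes place in $C^2_{loc}(\R^{N-1})$. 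Hence $V$ is a bounded classical solution of $-\Delta V = f(V)$ in $\R^{N-1}$ with $V > 0$, and $V$ inherits stability by passing to the limit in the stability inequality with cutoffs depending only on $x'$.

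To reach a contradiction, I invoke the Liouville classification for bounded positive stable solutions on $\R^{N-1}$: since $N-1 \geq 11$ and the behaviour $\liminf_{t \to 0^+} f(t)/t^s > 0$ holds with $s \in \left[0, \frac{N-3}{N-5}\right) = \left[0, \frac{(N-1)-2}{(N-1)-4}\right)$, the theorems of \cite{Farina}, \cite{DuFa} and \cite{Fa3} force $V$ to be identically equal to a constant $a > 0$ with $f(a) = 0$. This contradicts $f > 0$ on $(0,+\infty)$. Consequently the case $u > 0$ is ruled out, so $u \equiv 0$, and plugging back into the equation yields $f(0) = 0$.

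\textbf{Main obstacle.} The most delicate step is the invocation of the Liouville classification: one must check that the lower bound $\liminf_{t\to 0^+} f(t)/t^s > 0$ in the range $s < (N-3)/(N-5)$ matches the sub-stable (sub-Joseph--Lundgren) exponent threshold in dimension $N-1$ used in \cite{Farina, DuFa, Fa3}, and one must combine it with the uniform positivity of $V$ on compact sets (via the strong maximum principle) to handle a general $f$ rather than a pure power. A secondary technical point is the stability transfer in the limit, which is routine but requires a careful localization argument carried out in $\R^{N-1}$ rather than in $\R^{N}$.
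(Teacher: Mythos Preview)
Your argument follows the same template as the paper's: reduce to the case $u>0$, apply Theorem \ref{TH_fonc_G-bis}(ii) to obtain monotonicity, deduce stability, pass to a limiting profile $V>0$ on $\R^{N-1}$, and then invoke a Liouville-type theorem. The difference lies only in the last step. You claim that \cite{Farina}, \cite{DuFa}, \cite{Fa3} yield directly that $V$ is a positive constant $a$ with $f(a)=0$; this is the flavour of the low-dimensional result in \cite{DuFa1}, not of Theorem 1.2 in \cite{Fa3}, which instead concludes $V\equiv 0$ under a pointwise bound of the form $f(V(x))\geq \delta_1 V(x)^s$. The paper makes this explicit: it first replaces $f$ by a $C^1$ function $\tilde f$ that agrees with $f$ on $\left[0,\sup_{\R^{N-1}}V\right]$, is positive up to some $z>\sup V$, and is nonpositive beyond; the combination of $\liminf_{t\to0^+}f(t)/t^s>0$ and $\tilde f>0$ on $\left(0,\sup V\right]$ then gives $\tilde f(V)\geq \delta_1 V^s$ everywhere, so Theorem 1.2 of \cite{Fa3} applies in dimension $M=N-1$ and forces $V\equiv 0$, contradicting $V>0$. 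Your ``uniform positivity of $V$ on compact sets'' remark gestures at this but does not quite capture it; the needed ingredient is positivity of $f$ on the compact range of $V$, not a Harnack bound on $V$ itself. Once this modification is made precise, your proof coincides with the paper's.
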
  

\begin{proof} By proceeding as in the proof of Theorem \ref{th_n_entre_2_et_11}, if $u>0$ in $ \Omega$ we can construct $v \in C^2(\R^{N-1})$ which is a positive, bounded,  stable solution to $-\Delta v=f(v)$ in $\R^{N-1}$. Now, pick  $ z > \sup_{\R^{N-1}} v >0$ and consider the new nonlinear function $\tilde f \in C^{1}([0,+\infty))$ defined as follows :  $ \tilde f = f$ on $\left[0, \sup_{\R^{N-1}} v \right]$, $ \tilde f > 0$ on $ \left[ \sup_{\R^{N-1}} v, z \right)$, $\tilde f \leq 0$ on $\left[z, +\infty \right)$.\footnote{Note that, thanks to the assumptions on $f$ and $z$, such a function $\tilde f$ does exist.}  
Hence, $v$ is also a bounded stable solution to 
\begin{equation}
		\left\{
		\begin{array}{cll}
			- \Delta v=\tilde f (v) & \text{in}& \R^{N-1},\\
			v >0 & \text{on} & \R^{N-1}.\\
		\end{array}
		\right.
	\end{equation} 
Since $\liminf_{t \to 0^+} \frac{\tilde f(t)}{t^s} = \liminf_{t \to 0^+} \frac{f(t)}{t^s} > 0$ and $\tilde f > 0$ on 
$ \left ( 0, \sup_{\R^{N-1}} v \right)$, we see that 
$$
\exists \, \delta_1>0 \quad : \quad \forall \, x \in \R^{N-1} \qquad \tilde f(v(x)) \geq  \delta_1 v^s(x).
$$
Therefore we can apply Theorem $1.2$ of \cite{Fa3} to $v$ (with $M=N-1$) and find that $v \equiv 0$.
A contradiction. Hence $ u \equiv 0$ and $f(0)=0$. 
\end{proof}

\smallskip
 
Notice that  Theorem \ref{th_n_entre_2_et_11} and Theorem \ref{th_n_plus que11} immediately  imply the following 
non-existence result when $f(0)>0$.

\smallskip 

\begin{cor}\label{non-ex-f(0)>0}
Assume $N \geq 2$ and let $\Omega \subset \R^N$ be a uniformly continuous epigraph bounded from below and satisfying 
a uniform exterior cone condition. \\
If $f \in C^{1}([0,+\infty))$ satisfies $f(t)>0$ for $t\geq0$, then problem \eqref{equation_sans_neumann} does not admit any bounded distributional solution of class $C^0(\overline{\Omega}) \cap H^1_{loc}(\overline{\Omega})$. 
\end{cor}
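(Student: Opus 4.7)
The plan is to argue by contradiction, reducing directly to the two classification theorems just proved. Suppose a bounded distributional solution $u \in C^0(\overline{\Omega}) \cap H^1_{loc}(\overline{\Omega})$ of \eqref{equation_sans_neumann} does exist. Since $\Omega$ is a uniformly continuous epigraph bounded from below, item $(1)$ of the list following Definition \ref{Def-classG} ensures that the defining function $g$ belongs to the class $\mathcal{G}$; combined with the uniform exterior cone condition, this matches exactly the geometric hypotheses on $\Omega$ required by Theorem \ref{th_n_entre_2_et_11} and Theorem \ref{th_n_plus que11}.

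For $2 \leq N \leq 11$, I would apply Theorem \ref{th_n_entre_2_et_11} directly to $u$. The standing assumptions $f \in C^1([0,+\infty))$ and $f(t) > 0$ for $t>0$ are exactly those required there, and its conclusion yields $u \equiv 0$ together with $f(0) = 0$. For $N \geq 12$, I would apply Theorem \ref{th_n_plus que11} with $s = 0$: by continuity of $f$,
$$\liminf_{t \to 0^+} \frac{f(t)}{t^0} = f(0) > 0,$$
and the value $s=0$ lies in the admissible range $\bigl[0,\tfrac{N-3}{N-5}\bigr)$, since $\tfrac{N-3}{N-5} > 1$ for every $N \geq 6$. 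Hence the same theorem again forces $u \equiv 0$ and $f(0)=0$.

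In either range of $N$, we deduce $f(0) = 0$, contradicting the hypothesis $f(t) > 0$ for all $t \geq 0$ (which includes $t=0$). This contradiction proves that no bounded distributional solution exists. There is essentially no technical obstacle in this proof: the whole point is to observe that the assumption $f(0)>0$ is strong enough to automatically activate the low-order behaviour hypothesis $\liminf_{t \to 0^+} f(t)/t^s > 0$ at $s=0$ in the high-dimensional case, so that both regimes $N \leq 11$ and $N \geq 12$ are covered uniformly, and to check that uniformly continuous epigraphs form a subfamily of the class of admissible epigraphs $\mathcal{G}$.
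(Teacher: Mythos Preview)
Your argument is correct and matches the paper's own reasoning: the paper simply states that the corollary is an immediate consequence of Theorem \ref{th_n_entre_2_et_11} and Theorem \ref{th_n_plus que11}, and you have spelled out precisely that implication, including the observation that $f(0)>0$ allows the choice $s=0$ in Theorem \ref{th_n_plus que11} and that uniformly continuous $g$ belongs to $\mathcal{G}$.
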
  

The next result concerns the following natural class of nonlinearities introduced in \cite{DuFa}: 
\begin{equation}\label{cvx-liouv}
		\left\{
		\begin{array}{cll}
f \in C^{1}([0,+\infty)) \cap C^{2}((0,+\infty)), \quad f(0) =0, \\ 
& \\
f>0,  \textit{  nondecreasing and convex in }  (0,+\infty)  \\
    & \\

\textit{ s.t.  }  
\quad \lim_{u \to 0^+} \frac{f'(u)^2}{f(u)f''(u)} := q_0 \in [0,+\infty]\\
		\end{array}
		\right.
	\end{equation}
where, in the latter, we agree to set $\frac{f'(u)^2}{f(u)f''(u)} = + \infty$ if $f''(u) =0$. \\
Notice that, we necessarily have $q_0 \in [1,+\infty]$ (see Lemma 1.4 in \cite{DuFa}). 

\smallskip

The typical representative of this class is given by the function $f(u) = u^p$, $p>1$. In this case $\frac{f'(u)^2}{f(u)f''(u)} = \frac{p}{p-1}$
and so $ q_0$ coincides with the conjugate exponent of $p$. Consequently, if we define $p_0 \in [1,+\infty]$ as the conjugate exponent of $q_0$ by $\frac{1}{p_0} + \frac{1}{q_0} =1$, we have that the exponent $p_0$ can be considered as a "measure" of the flatness of $f$ at the origin.  \\ 
Other members of the preceding class are provided by the functions $f_n(u) = e^u - \sum_{k=0}^n \frac{u^k}{k!}$,  where $n \geq 1$ is an integer. It is easily seen that $ q_0 (f_n) = \frac{n+1}{n}$ and so $ p_0(f_n) = n+1$. 

\medskip

\begin{thm}\label{th_n_plus que11-bis}
Assume $N \geq 12$ and let $\Omega $ be an epigraph defined by a function $g \in \mathcal{G}$.
Also suppose that $\Omega$ is bounded from below and satisfies a uniform exterior cone condition. \\
Let $u \in C^0(\overline{\Omega}) \cap H^1_{loc}(\overline{\Omega})$ be a bounded distributional solution to \eqref{equation_sans_neumann} where $f$  satisfies \eqref{cvx-liouv}. \\
Suppose that $p_0$, the conjugate exponent of $q_0$, satisfies 
\begin{equation}\label{JL-cvx-liouv}
1 \leq p_0 < p_c(N-1),
\end{equation}
where $p_c$ is the Jospeh-Lundgren stability exponent given by 
\begin{equation*}
p_c(N) = \frac{(N-2)^2 -4N + 8 \sqrt{N-1}}{(N-2)(N-10)} . 
\end{equation*}
Then $u \equiv 0$. 
\end{thm}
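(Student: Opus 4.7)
The plan is to follow the scheme of the proofs of Theorems \ref{th_n_entre_2_et_11} and \ref{th_n_plus que11}: assume by contradiction that a nontrivial solution exists, reduce the problem to a Liouville-type rigidity statement for bounded positive stable solutions of $-\Delta v = f(v)$ in $\R^{N-1}$, and then invoke the appropriate rigidity theorem tailored to the class \eqref{cvx-liouv}.

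First, by standard interior regularity $u \in C^2(\Omega)$, so the strong maximum principle yields the dichotomy $u \equiv 0$ or $u > 0$ in $\Omega$. Assuming for contradiction the second alternative, I would note that $f(0) = 0 \geq 0$, $f \in C^1([0,+\infty)) \subset {Lip}_{loc}([0,+\infty))$ and $u$ is bounded (hence bounded on finite strips), so item $(ii)$ of Theorem \ref{TH_fonc_G-bis} applies and gives $\frac{\partial u}{\partial x_N} > 0$ in $\Omega$. The function $w := \frac{\partial u}{\partial x_N}$ is then a positive classical solution of the linearized equation $-\Delta w - f'(u) w = 0$ in $\Omega$, so the Allegretto--Piepenbrink principle implies that $u$ is a stable solution of $-\Delta u = f(u)$ in $\Omega$.

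Next, since $u$ is bounded and strictly increasing in $x_N$, for each $x' \in \R^{N-1}$ the limit $v(x') := \lim_{x_N \to +\infty} u(x', x_N)$ exists, and by standard elliptic regularity the translates $u_n(x', x_N) := u(x', x_N + n)$ converge in $C^2_{\text{loc}}(\R^N)$ to the function $(x', x_N) \mapsto v(x')$. Therefore $v \in C^2(\R^{N-1})$ is a bounded classical solution of $-\Delta v = f(v)$ on $\R^{N-1}$; it is stable as a $C^2_{\text{loc}}$ limit of stable solutions, and it is positive since $v(x') \geq u(x', g(x') + 1) > 0$ for every $x'$. The assumption $p_0 < p_c(N-1)$ then allows one to invoke the Liouville-type theorem of \cite{DuFa} for the class \eqref{cvx-liouv} in dimension $N-1$: every bounded positive stable classical solution of $-\Delta v = f(v)$ in $\R^{N-1}$ must be constant. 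Hence $v \equiv c > 0$ with $f(c) = 0$, which contradicts the fact that $f > 0$ on $(0,+\infty)$.

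The main obstacle will be ensuring that the stability of $u$ on $\Omega$ correctly passes to the limit as a stability of $v$ on the whole of $\R^{N-1}$. This reduces to testing the stability inequality for $u$ against cutoff functions $\varphi(x')$ placed deep enough inside $\Omega$ (which is feasible since $\Omega$ is an epigraph bounded from below and $u_n$ eventually covers any fixed cylinder $B'(0',R) \times \R$) and letting $n \to +\infty$, using the $C^2_{\text{loc}}$ convergence of $u_n$ and of $f'(u_n)$. Once this is established, the remaining steps are direct applications of the monotonicity theorem \ref{TH_fonc_G-bis} and of the quoted Liouville-type rigidity result.
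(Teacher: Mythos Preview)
Your proposal is correct and follows essentially the same route as the paper: reduce via the strong maximum principle to the case $u>0$, apply Theorem~\ref{TH_fonc_G-bis}(ii) to get $\partial_{x_N} u>0$ and hence stability, pass to the limit profile $v(x')=\lim_{x_N\to+\infty}u(x',x_N)$ as a bounded positive stable solution on $\R^{N-1}$, and then invoke Theorem~1.5 of \cite{DuFa} under \eqref{JL-cvx-liouv} to force $v\equiv a>0$ with $f(a)=0$, contradicting $f>0$ on $(0,+\infty)$. Your added discussion of how stability passes to the limit is a welcome elaboration of a step the paper leaves implicit.
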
  

Note that the preceding theorem applies to $f(u) =u^p$ if $ 1 \leq p < p_c(N-1)$ and to $ f_n$ if $n+1 < p_c(N-1)$. 

\smallskip

\textit{Proof of Theorem \ref{th_n_plus que11-bis}.} 
By proceeding as in the proof of Theorem \ref{th_n_entre_2_et_11}, if $u>0$ in $ \Omega$ we can construct $v \in C^2(\R^{N-1})$ which is a positive, bounded,  stable solution to $-\Delta v=f(v)$ in $\R^{N-1}$. Since the assumption \eqref{JL-cvx-liouv} is in force, we can  apply  Theorem $1.5$ in \cite{DuFa} to get that $v\equiv  const. =a>0$ et $f(a)=0$. A contradiction, hence $u \equiv 0$. \qed

\medskip

Next we state the following immediate consequence of Theorem \ref{TH2coerc}.

\begin{cor}\label{cor-TH2coerc}
When the epigraph $\Omega$ is coercive, the conclusion of Theorems \ref{th_n_entre_2_et_11}-\ref{th_n_plus que11-bis} 
and  Corollary \ref{non-ex-f(0)>0} holds true under the sole assumption of continuity of $g$, i.e., we do not need to require the uniform exterior cone condition for the epigraph. 
\end{cor}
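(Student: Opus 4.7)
\textbf{Proof plan for Corollary \ref{cor-TH2coerc}.} The plan is to re-run the proofs of Theorems \ref{th_n_entre_2_et_11}, \ref{th_n_plus que11}, \ref{th_n_plus que11-bis} and Corollary \ref{non-ex-f(0)>0} verbatim, substituting the appeal to Theorem \ref{TH_fonc_G-bis} by an appeal to Theorem \ref{TH2coerc}. A careful inspection of those proofs reveals that the uniform exterior cone condition enters \emph{only} through the application of Theorem \ref{TH_fonc_G-bis} used to obtain the monotonicity $\partial_{x_N} u > 0$; once monotonicity is in hand, the remainder of each argument is a pure $\R^{N-1}$-Liouville computation and is unaffected by the regularity of $\partial\Omega$. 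Since Theorem \ref{TH2coerc} yields the very same monotonicity conclusion for every coercive continuous epigraph (with $f \in \mathrm{Lip}_{\mathrm{loc}}([0,+\infty))$, which holds here because $f \in C^1$), no geometric hypothesis on $\Omega$ beyond coercivity of $g$ is needed. Note also that coercive continuous functions on $\R^{N-1}$ automatically lie in the class $\mathcal{G}$ by item $(2)$ following Definition \ref{Def-classG}, so passing from the hypothesis ``$g\in\mathcal{G}$ and $\Omega$ satisfies a uniform exterior cone condition'' to ``$g$ is continuous and coercive'' genuinely widens the allowed geometry.

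With monotonicity secured, the remaining steps are standard and proceed as in the proofs of the base theorems. By the strong maximum principle (applied to the $C^2$ solution $u$, whose $C^2$ regularity follows from $u\in C^0(\overline{\Omega})$ and $f\in C^1$), either $u\equiv 0$ on $\Omega$ or $u>0$ on $\Omega$. In the latter case, the boundedness of $u$ together with $\partial_{x_N}u>0$ allows one to define the pointwise limit
\[
v(x') := \lim_{x_N \to +\infty} u(x',x_N), \qquad x' \in \R^{N-1},
\]
and standard interior elliptic estimates combined with the translation-invariance of the equation yield that $v\in C^2(\R^{N-1})$ is a bounded, positive, classical solution of $-\Delta v = f(v)$ on $\R^{N-1}$. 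Moreover, $\partial_{x_N}u>0$ is a positive solution of the linearized equation on $\Omega$, which implies (by the standard Berestycki--Caffarelli--Nirenberg/Ambrosetti--Prodi stability criterion) that $u$ is a stable solution, and stability is preserved in the limit, so $v$ is itself a bounded positive stable solution on $\R^{N-1}$.

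To close the argument one invokes the appropriate Liouville-type theorem for stable solutions on $\R^{N-1}$: Theorem $1$ of \cite{DuFa1} when $2\leq N\leq 11$ (so that $N-1\leq 10$); Theorem $1.2$ of \cite{Fa3} when $N\geq 12$ under the growth condition $\liminf_{t\to 0^+} f(t)/t^s > 0$ with $s<(N-3)/(N-5)$; and Theorem $1.5$ of \cite{DuFa} in the convex Liouville class \eqref{cvx-liouv} with $p_0 < p_c(N-1)$. Each of these forces $v\equiv a$ for some constant $a>0$ with $f(a)=0$, which contradicts the positivity of $f$ on $(0,+\infty)$. Hence $u\equiv 0$, from which $f(0)=0$ follows in the statements where $f(t)>0$ for $t>0$, and Corollary \ref{non-ex-f(0)>0} follows in the form of non-existence when $f(0)>0$. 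The only slightly delicate point throughout is the justification of the limit profile $v$ and the transmission of stability across the limit, but this is precisely the step already carried out in the proofs of Theorems \ref{th_n_entre_2_et_11}--\ref{th_n_plus que11-bis} and is independent of any boundary regularity of $\Omega$; no new difficulty arises in the coercive setting.
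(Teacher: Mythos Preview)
Your proposal is correct and matches the paper's approach exactly: the paper states this corollary as an immediate consequence of Theorem \ref{TH2coerc}, which is precisely the substitution you describe---replacing the appeal to Theorem \ref{TH_fonc_G-bis} by Theorem \ref{TH2coerc} in the proofs of Theorems \ref{th_n_entre_2_et_11}--\ref{th_n_plus que11-bis} and Corollary \ref{non-ex-f(0)>0}. Your additional elaboration (that the cone condition enters only via the monotonicity step, and that the Liouville argument on $\R^{N-1}$ is boundary-independent) is accurate and simply makes explicit what the paper leaves implicit.
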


Theorems \ref{th_n_entre_2_et_11}-\ref{th_n_plus que11-bis} and  Corollary \ref{cor-TH2coerc} recover and extend/complement some results in \cite{Farina}, \cite{DuFa1}, \cite{CLZ}, \cite{DFP} and \cite{GMS}.

\medskip 

We conclude this section with the following classification result for solutions of \eqref{equation_sans_neumann} that tend to zero at infinity.

\begin{thm}\label{th-tende-a-zero}
Assume $N \geq 2$ and let $\Omega $ be an epigraph defined by a function $g \in \mathcal{G}$.
Also suppose that $\Omega$ is bounded from below and satisfies a uniform exterior cone condition. \\
Assume $f \in {Lip}_{loc}([0,+\infty))$ and let $u \in C^0(\overline{\Omega}) \cap H^1_{loc}(\overline{\Omega})$ be a distributional solution to \eqref{equation_sans_neumann} such that
    \begin{equation}\label{tende-a-zero}
\lim_{\substack {x \in \Omega, \\ \vert x \vert \longrightarrow \infty}} u(x) =0 .
	\end{equation}
Then $u \equiv 0$ and $f(0)=0$. \\ 
When the epigraph $\Omega$ is coercive, the above conclusion holds true under the sole assumption of continuity of $g$.
\end{thm}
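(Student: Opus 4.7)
The plan is to argue by contradiction, assuming $u \not\equiv 0$. First observe that $u$ is bounded on $\Omega$ (it is continuous on $\overline{\Omega}$, vanishes on $\partial\Omega$, and tends to $0$ at infinity), so $M := \sup_{\Omega} u > 0$ is attained at some interior point. The conclusion $f(0)=0$ follows immediately from the equation once $u \equiv 0$ is established, so it suffices to derive a contradiction from $u \not\equiv 0$.

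The heart of the argument is to first prove $f(0) \geq 0$, since Theorem \ref{TH_fonc_G-bis} and Theorem \ref{TH2coerc} both require this. Suppose $f(0) < 0$ and pick $\delta \in (0,M)$ small enough that $f(t) \leq f(0)/2 =: -a < 0$ on $[0,\delta]$. Set $K := \{x \in \Omega : u(x) \geq \delta\}$: thanks to $u|_{\partial \Omega}=0$ and $u \to 0$ at infinity, $K$ is a nonempty compact subset of $\Omega$ contained in some ball $B_{R_0}(0)$, and on $\Omega \setminus K$ one has $\Delta u = -f(u) \geq a$. For any closed ball $\overline{B_r(y)} \subset \Omega \setminus K$, the function $w(x) := \delta + \frac{a}{2N}(|x-y|^2 - r^2)$ satisfies $\Delta w = a$ and $w = \delta \geq u$ on $\partial B_r(y)$, so $u - w$ is subharmonic on $B_r(y)$ with nonpositive boundary trace and $u(y) \leq w(y) = \delta - a r^2/(2N)$. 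Since $\Omega$ is the epigraph of a continuous $g$ bounded from below and $K$ is bounded, the point $y = (0', T)$ together with the ball $B_r(y)$ lies in $\Omega \setminus K$ whenever $T > r + R_0 + \sup_{|x'| < r} g(x')$, a condition satisfied for every $r$ by choosing $T$ large enough. Taking $r > \sqrt{2N\delta/a}$ then yields $u(y) < 0$, contradicting $u \geq 0$.

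Once $f(0) \geq 0$ is established, the strong maximum principle shows $u > 0$ in $\Omega$: indeed, with $L_f$ the Lipschitz constant of $f$ on $[0, M]$, one has $-\Delta u + L_f u = f(u) + L_f u \geq f(0) \geq 0$, and the combination of $u \geq 0$, $u \not\equiv 0$, and the connectedness of $\Omega$ precludes interior zeros. Consequently $u$ is a solution of \eqref{NonLin-PoissonEq} bounded on finite strips, and I can invoke Theorem \ref{TH_fonc_G-bis}(ii) in the general case (and Theorem \ref{TH2coerc} in the coercive case) to deduce $\frac{\partial u}{\partial x_N} > 0$ everywhere in $\Omega$.

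It remains to reach the contradiction. Fix any $x' \in \R^{N-1}$: the map $t \mapsto u(x', t)$ is continuous and strictly increasing on $(g(x'), +\infty)$, and it satisfies $\lim_{t \to g(x')^+} u(x', t) = 0$ by the Dirichlet condition and $\lim_{t \to +\infty} u(x', t) = 0$ by the growth hypothesis \eqref{tende-a-zero}. A strictly monotone continuous function on an open interval cannot share the same finite limit at both endpoints unless it is constant, hence $u(x', \cdot) \equiv 0$; since $x'$ is arbitrary, $u \equiv 0$ in $\Omega$, the sought contradiction. The delicate step is the barrier argument used to rule out $f(0) < 0$, which relies on the ability to inscribe arbitrarily large balls in $\Omega$ away from a prescribed compact set, a feature guaranteed by $\Omega$ being an epigraph bounded from below.
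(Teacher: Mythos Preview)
Your proof is correct and reaches the same conclusion as the paper's, but via a different route at the key step of controlling the sign of $f(0)$. The paper establishes $f(0)=0$ directly and unconditionally by translating $u$ upward: the functions $u_k(x)=u(x',x_N+k)$ are uniformly bounded, hence (by interior elliptic estimates) a subsequence converges in $C^2_{\mathrm{loc}}$ on a fixed ball $B\subset\subset\Omega$ to a solution $v$ of $-\Delta v=f(v)$, and the decay hypothesis \eqref{tende-a-zero} forces $v\equiv 0$, whence $f(0)=0$. You instead argue by contradiction under $u\not\equiv 0$ and use an explicit quadratic barrier on large balls contained in $\Omega\setminus K$ to rule out $f(0)<0$. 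Your approach is more elementary (only the classical comparison principle on balls, no elliptic compactness) and yields only $f(0)\geq 0$, but that is precisely what is needed to run the strong maximum principle and to invoke Theorem~\ref{TH_fonc_G-bis}(ii); the full conclusion $f(0)=0$ then comes a posteriori from $u\equiv 0$. The paper's translation-limit argument is shorter and delivers $f(0)=0$ before the case split, but both arguments exploit the same geometric feature of epigraphs bounded from below: the freedom to move arbitrarily far in the $x_N$-direction while remaining inside $\Omega$ and away from any fixed compact set.
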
  

\medskip

Note that, in the previous result, we make no assumptions on $f$ (beside $f \in {Lip}_{loc}([0,+\infty))$) or  the boundedness of $u$.

\smallskip

The above result recovers and improves upon a result of \cite{{Esteban-Lions}}, where the conclusion has been obtained  
for a smooth coercive epigraph. 

\smallskip

\textit{Proof of Theorem \ref{th-tende-a-zero}.} First we prove that $u$ is bounded and then we use this information to deduce that $f(0)=0$.  The boundedness of $u$ easily follows from the continuity of $u$  on $ \overline{\Omega}$ and \eqref{tende-a-zero}. Then, pick an open ball $B \subset\subset \Omega$ and, for any integer $k \geq 1$ and any $ x \in B$, consider the function $u_k(x) = u(x', x_N +k)$. By the boundedness of $u$ and standard elliptic estimates, we have that 
a subsequence of $(u_k)$ (still denoted by $(u_k)$) converges in the $C^2_{loc}(B)$-topology to a solution $v \in C^2(B)$  of $-\Delta v=f(v)$ in $B$. Since \eqref{tende-a-zero} is in force, we necessarily have $v\equiv0$ which, in turn, yields $ f(0)=0$. By the latter and the strong maximum principle,  either $u \equiv 0$ in $\Omega$ or $u>0$ in $\Omega$. Let us prove  that the second case cannot occur. Since $u$ is bounded and $f(0)=0$, we can apply Theorem \ref{TH_fonc_G-bis} (resp. Theorem \ref{TH2coerc}, when $\Omega$ is coercive)  to prove that $\dfrac{\partial u}{\partial x_{N}}>0$ in $\Omega$. But the latter contradicts the assumption \eqref{tende-a-zero}. Hence $ u \equiv 0$ and $f(0)=0$. \qed

\section{Some examples} \label{SectEXAMPLES}
  
\begin{Exam}\label{Ex1}   Let us consider the following two functions $g_1, \, g_2 : \R \mapsto \R$ defined by 

	\begin{multicols}{2}
		\begin{equation*}
		g_{1}(x)=\left\{
		\begin{array}{crl}
			0 & \text{if} & x \in  (-\infty,-4], \\
			\sqrt{4-(x+2)^{2}} & \text{if}&  x \in [-4,0],\\
			\sqrt{4-(x-2)^{2}} & \text{if}&  x \in [0,2],\\
			2 & \text{if} & x \in [2,+\infty),
		\end{array}
		\right.
	\end{equation*}
	    \newline
		\begin{figure}[H]
			\centering
		\includegraphics[width=5.5cm]{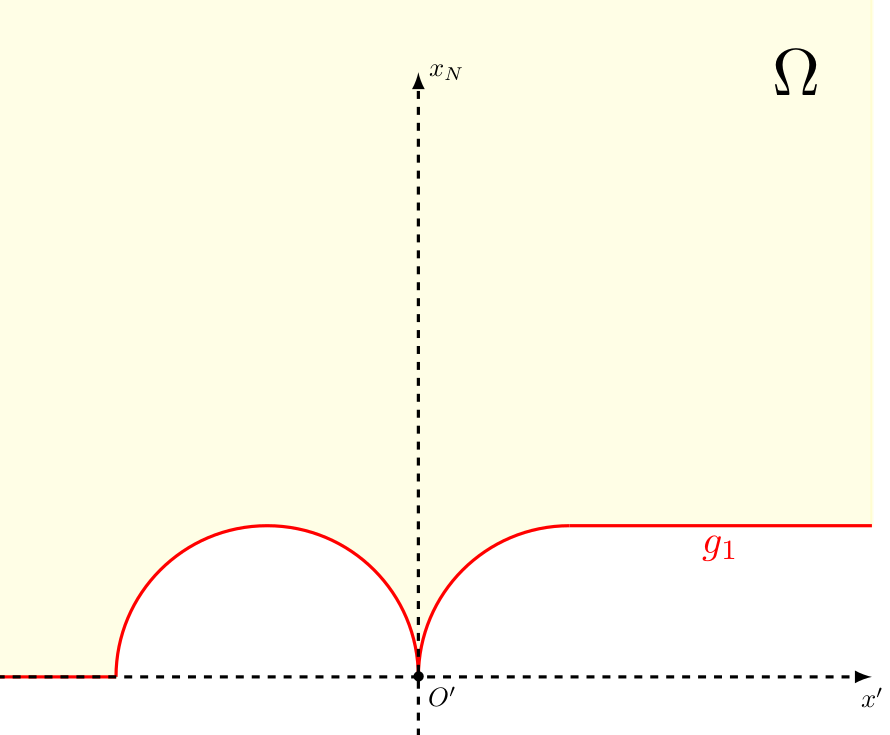}
		\caption{$g_{1}$}
		\label{fig:exemple1}
		\end{figure}
	\end{multicols}
\begin{multicols}{2}
	\begin{equation*}
		g_{2}(x)=\left\{
		\begin{array}{crl}
			0 & \text{if} & x \in (-\infty,-4], \\
			\sqrt{4-(x+2)^{2}} & \text{if}&  x \in [-4,0],\\
			\sqrt{4-(x-2)^{2}} & \text{if}&  x \in [0,2],\\
			2 & \text{if} & x \in [2,6], \\
			x-4 & \text{if} & x \in [6,+\infty). 
		\end{array}
		\right.
	\end{equation*}
	\newline
	\begin{figure}[H]
		\centering
	\includegraphics[width=6cm]{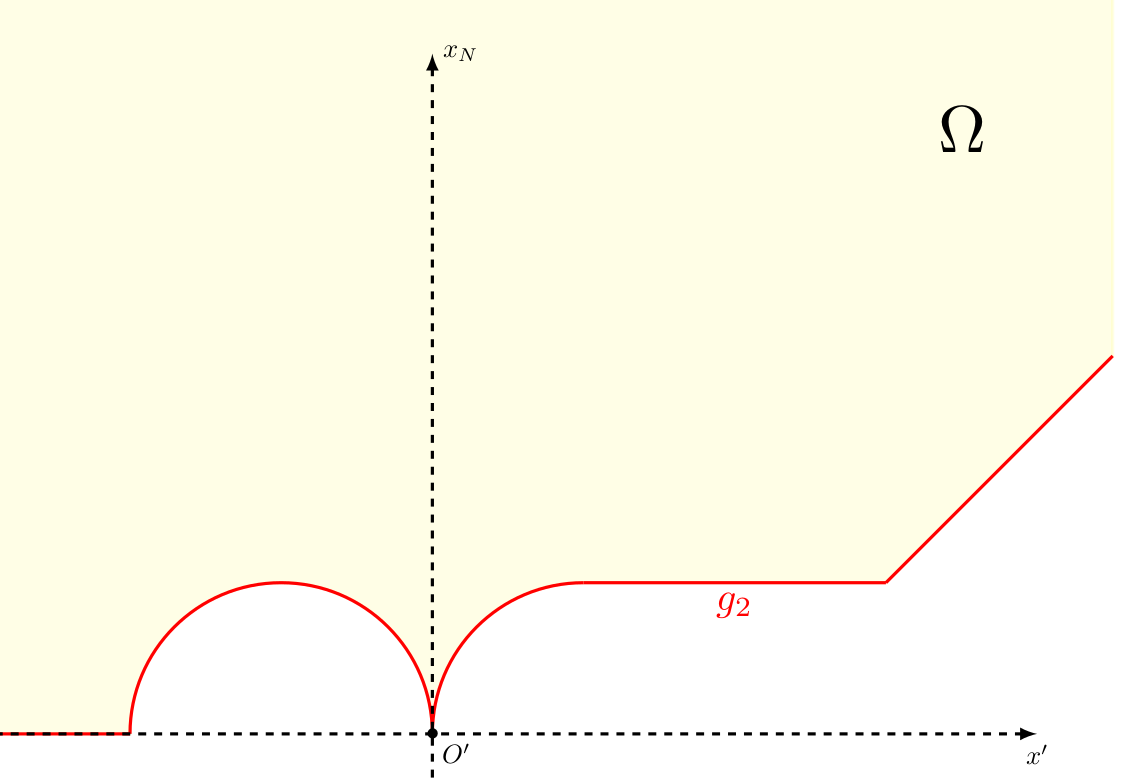}
		\caption{$g_{2}$}
		\label{fig:exemple2}
	\end{figure}
\end{multicols}
Note that $g_1$ is globally $\frac{1}{2}$-Hölder-continuous
and $g_2 $ is locally $\frac{1}{2}$-Hölder-continuous, but neither $g_1$ nor $g_2$ are locally $\alpha$-Hölder-continuous, for any $ \alpha \in ( \frac{1}{2} , 1 ] $.  Also observe that $ g_2(x) = g_1(x) + (x-6)^+$ for any 
$ x \in \R$, therefore the two-dimensional epigraphs defined by $g_1$ and $g_2$ are uniformly continuous,  bounded from below, but not locally Lipschitz-continuous. Moreover, it is easily seen that both of them satisfy a uniform exterior sphere condition (of radius $\frac{1}{2}$). Hence, they also satisfy a uniform exterior cone condition.  

\end{Exam}

\smallskip

\begin{Exam}\label{Ex3} Assume $N \geq 2$ and let $\Omega$ be the half-space $ \{x \in \R^N \, : \, x_N> 0 \}$.  \\
The bounded function 
\begin{equation}\label{ex-disc}
			u(x)=\left\{
			\begin{array}{crl}
      		1 -(x_N-1)^4 & \text{if}&  0 \leq x_N \leq 1,\\
        	    1 & \text{if} & x_N >1,
			\end{array}
				\right.
\end{equation}
is a classical $C^2$ solution to \eqref{NonLin-PoissonEq}, where $f$ is given by the following non-increasing function 
\begin{equation}\label{f-disc}
			f(t)=\left\{
			\begin{array}{crl}
			12 & \text{if} & t <0, \\
      		12 \sqrt{1-t} & \text{if}&  0 \leq t \leq 1,\\
        	    0 & \text{if} & t >1.
			\end{array}
				\right.
\end{equation}
Note that $f$ is globally Hölder-continuous, but not locally Lipschitz-continuous on $\R$.

\end{Exam}

The following example (inspired from \cite{Farinaexemple}) shows that we cannot remove the assumption on the Lipschitz character of $f$ in our main results. 

\begin{Exam}\label{Ex4} Assume $N \geq 2$ and let $\Omega$ be the half-space $ \{x \in \R^N \, : \, x_N> 0 \}$.  \\
	The bounded function (see Figure \ref{fig_exemple_3})	
	\begin{equation}\label{contre_exemple_monotonie}
		u(x)=\left\{
		\begin{array}{crl}
			0 & \text{if}&  0 \leq x_N \leq 1,\\
			(1-(x_N-2)^{4})^{4} & \text{if}& 1<x_N \leq 3,\\
			(1-(x_N-4)^{4})^{4} & \text{if}& 3< x_N \leq 4,\\
			1 & \text{if} & x_N >4,
		\end{array}
		\right.
	\end{equation}
	is a classical $C^2$ solution to \eqref{NonLin-PoissonEq}, where $f$ is given by the following globally Hölder-continuous function 
	\begin{equation}\label{f-contre_exemple_monotonie}
		f(t)=\left\{
		\begin{array}{crl}
			0 & \text{if}&   t< 0,\\
			-192(t(1-t^{\frac{1}{4}}))^{\frac{1}{2}}(1-\frac{5}{4}t^{\frac{1}{4}}) & \text{if}& 0 \leq t \leq 1,\\
			0 & \text{if} & t >1.
		\end{array}
		\right.¨
	\end{equation}
	Note that $f$ is not locally Lipschitz-continuous on $\R$ and that $\frac{\partial u}{\partial x_{N}}$ changes sign on $\R^{N}_{+}$.

\end{Exam}

\begin{figure}[!h]
	\centering
	\includegraphics[width=0.5\textwidth]{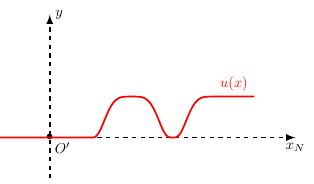}
	\caption{Graph of $u$}
	\label{fig_exemple_3}
\end{figure}

\bigskip

\section{Notations}\label{Notations} \quad 

\noindent  $\mathbb{R}^N_+ =\{x=(x', x_N) \in \R^{N-1} \times \R  \ | \ x_N>0\}$, the open upper half-space of $\R^N$.  

\bigskip

\noindent $ \vert \cdot \vert$ : the Euclidean norm. 

\bigskip

\noindent $B(x,R)$  : the Euclidean $N$-dimensional open ball of center $x$ and radius $R>0$. 

\bigskip

\noindent $B'(x',R)$  : the Euclidean $N-1$-dimensional open ball of center $x'$ and radius $R>0$. 

\bigskip

\noindent $B_R : = B(0 , R)$  and $B'_R := B'(0', R)$, where $0 =(0',0) \in \R^{N-1} \times \R$ is the origin of $ \R^N$.  

\bigskip

\noindent $UC(X)$ : the set of uniformly continuous functions on $X$. 

\bigskip

\noindent $Lip(X)$ : the set of globally Lipschitz-continuous functions on $X$. 

\bigskip

\noindent $Lip_{loc}(X)$ : the set of locally Lipschitz-continuous functions on $X$. 

\bigskip

\noindent $C^k(\overline{U})$ : the set of functions in $C^k(U)$ all of whose derivatives of order $ \leq k$ have continuous (not necessarily bounded) extensions to the closure of the open set $U$. 

\bigskip

\noindent $C^{0,\alpha}(\overline{U})$ : the vector space of bounded and globally $\alpha$-Hölder-continuous functions $h$ on the open set $U$ endowed with the norm :

\medskip

\qquad $\Vert h \Vert_{C^{0,\alpha}(\overline{U})} := \|h\|_{L^{\infty}(U)} + 
\left[ h \right]_{C^{0,\alpha}  (U)} :=  \sup_{x \in U} |h(x)| + \sup_{x,y \in U, x\neq y}
\frac{|h(x)-h(y)|}{|x-y|^{\alpha}} $ . 

\bigskip

\noindent $C^{k,\alpha}(\overline{U})$ : the vector space of functions in $C^k(U)$ all of whose derivatives of order $ \leq k$ belong to $C^{0,\alpha}(\overline{U})$, endowed with the norm :

\medskip

\hskip4truecm  $\Vert h \Vert_{C^{k,\alpha}(\overline{U})} := \sum_{0 \leq \vert \beta \vert \leq k} \Vert \partial^{\beta} h 
\Vert_{C^{0,\alpha}(\overline{U})}$ . 

\bigskip

\noindent $\mathcal{D}'(U)$ : the space of distributions on the open set $U$. 

\bigskip

\noindent $H^1_{loc}(\overline{U}) = \{ u : U \mapsto \R,  \,\,  \text{$u$ Lebesgue-mesurable} \, : \, u \in H^1(U \cap B(0,R)) \quad \forall \, R>0 \}$, \\
 i.e., $u$ is Lebesgue-measurable on the open set $U$ and $u \in H^1(V)$ for any open bounded set $ V\subset U$.
 
 \bigskip

\noindent $W^{1,\infty}_{loc}(\overline{U}) = \{ u : U \mapsto \R,  \,\,  \text{$u$ Lebesgue-mesurable} \, : \, u \in W^{1,\infty}(U \cap B(0,R)) \quad \forall \, R>0 \}$, \\
 i.e., $u$ is Lebesgue-measurable on the open set $U$ and $u \in W^{1,\infty}(V)$ for any open bounded set $ V\subset U$.
 
 \bigskip

\bibliography{plain.bst}

\end{document}